\numberwithin{equation}{section}
\newtheorem{theorem}{Theorem}[section]
\newtheorem{proposition}[theorem]{Proposition}
\newtheorem{lemma}[theorem]{Lemma}
\newtheorem{corollary}[theorem]{Corollary}
\theoremstyle{definition}
\newtheorem{remark}[theorem]{Remark}
\newcommand\1{{\mathds 1}}
\def\bbI{{\mathbb I}}
\def\N{{\mathbb N}}
\def\R{{\mathbb R}}
\def\C{{\mathbb C}}
\def\SS{{\mathbb S}}
\def\Z{{\mathbb Z}}
\def\rd{{\mathrm{d}}}
\def\re{{\mathrm{e}}}
\def\ri{{\mathrm{i}}}
\def\cB{{\mathcal B}}
\def\cE{{\mathcal E}}
\def\cF{{\mathcal F}}
\def\cM{{\mathcal M}}
\def\cN{{\mathcal N}}
\def\cS{{\mathcal S}}
\def\fS{{\mathfrak S}}
\newcommand{\sC}{\mathscr{C}}
\newcommand{\Id}{\mathbbm 1}
\newcommand{\msc}[1]{\href{https://mathscinet.ams.org/mathscinet/search/mscdoc.html?code=#1}{#1}}
\newcommand{\be}[1]{\begin{equation}\label{#1}}
\newcommand{\ee}{\end{equation}}
\renewcommand{\(}{\left(}
\renewcommand{\)}{\right)}
\newcommand{\ird}[1]{\int_{\R^d}{#1}\,\rd x}
\newcommand{\ir}[1]{\int_{\R}{#1}\,\rd x}
\newcommand{\nrm}[2]{\|{#1}\|_{#2}}
\newcommand{\Dom}{\operatorname{Dom}}
\newcommand{\Dirac}{\slashed{\mathcal{D}}_m}
\def\ri{{\mathrm{i}}}
\def\eps{\varepsilon}
\def\rad{{\rm rad}}
\newcommand{\aalpha}{\mathsf a}
\newcommand{\bbeta}{\mathsf b}
\newcommand{\st}{\mathsf t}
\newcommand{\sV}{\mathsf W}
\begin{document}
\title{Keller and Lieb-Thirring estimates of the eigenvalues\\ in the gap of Dirac operators}
\titlemark{Keller-Lieb-Thirring estimates and Dirac operators}

\emsauthor1{Jean Dolbeault}{J.~Dolbeault}
\emsauthor2{David Gontier}{D.~Gontier}
\emsauthor{3}{Fabio Pizzichillo}{F.~Pizzichillo}
\emsauthor{4}{Hanne Van Den Bosch}{H.~Van Den Bosch}

\emsaffil1{CEREMADE (CNRS UMR n$^\circ$ 7534), PSL University, Universit\'e Paris-Dauphine, Place de Lattre de Tassigny, 75775 Paris 16, France \email{dolbeaul@ceremade.dauphine.fr}}
\emsaffil2{CEREMADE (CNRS UMR n$^\circ$ 7534), PSL University, Universit\'e Paris-Dauphine, Place de Lattre de Tassigny, 75775 Paris 16, France \& ENS/PSL University, DMA, F-75005, Paris, France \email{gontier@ceremade.dauphine.fr}}
\emsaffil{3}{
Departamento de Matem\'atica e Inform\'atica aplicadas a las Ingenierías Civil y Naval, Universidad Polit\'ecnica de Madrid,
Escuela T\'ecnica Superior de Ingenieros de Caminos, Canales y Puertos,
Calle Profesor Aranguren, 3.
28040, Madrid, Spain\email{fabio.pizzichillo@upm.es}}
\emsaffil{4}{Departamento de Ingenier\'{\i}a Matem\'atica \& Center for Mathematical Modeling, Facultad de Ciencias F\'{\i}sicas y Matem\'aticas, Universidad de Chile and CNRS IRL 2807, Beauchef 851, Piso 5, Santiago, Chile \email{hvdbosch@dim.uchile.cl}}

\classification[\msc{49R05}, \msc{49J35}, \msc{47A75}, \msc{47B25}]{\msc{81Q10}}

\keywords{Dirac operators, potential, spectral gap, eigenvalues, ground state, min-max principle, Birman-Schwinger operator, domain, self-adjoint operators, Keller estimate, Lieb-Thirring inequality, interpolation, Gagliardo-Nirenberg-Sobolev inequality, Kerr nonlinearity}

\begin{abstract}
We estimate the lowest eigenvalue in the gap of the essential spectrum of a Dirac operator with mass in terms of a Lebesgue norm of the potential. Such a bound is the counterpart for Dirac operators of the \emph{Keller estimates} for the Schr\"odinger operator, which are equivalent to Gagliardo-Nirenberg-Sobolev interpolation inequalities. Domain, self-adjointness, optimality and critical values of the norms are addressed, while the optimal potential is given by a Dirac equation with a Kerr nonlinearity. A new \emph{critical} bound appears, which is the smallest value of the norm of the potential for which eigenvalues may reach the bottom of the gap in the essential spectrum. The Keller estimate is then extended to a Lieb-Thirring inequality for the eigenvalues in the gap. Most of our result are established in the \emph{Birman-Schwinger} reformulation.
\end{abstract}

\maketitle

\vspace*{-40pt}
\begin{spacing}{0.75}
{\renewcommand{\contentsname}{}\small\tableofcontents}
\addtocontents{toc}{\protect\setcounter{tocdepth}{2}}
\end{spacing}
\thispagestyle{empty}
\pagebreak

\section{Introduction and main results}

In 1961, J.B.~Keller established in~\cite{MR121101} the expression of the potential which minimizes the lowest eigenvalue, or \emph{ground state}, $\lambda_S(V)$ of the Schr\"odinger operator $-\Delta-V$ in dimension $d=1$, under a constraint on the Lebesgue norm
\[
\nrm Vp=\(\ird{|V|^p}\)^{1/p}
\]
of exponent $p$ of $V$. This estimate was later extended in~\cite{Lieb-Thirring76} by E.H.~Lieb and W.~Thirring to higher dimensions and to a sum of the lowest eigenvalues. During the last forty years, various refinements were published. As an example, we quote stability results for $\lambda_S(V)$ proved in~\cite{MR3177378} by E.A.~Carlen, R.L.~Frank, and E.H.~Lieb. Although Dirac operators inherit many qualitative properties of Schr\"odinger operators, dealing with Dirac operators turns out to be a delicate issue. 

If $\Dirac$ denotes the free Dirac operator and $V$ is a non-negative valued function, \hbox{$\Dirac - V$} is not bounded from below. One is actually interested in the lowest eigenvalue $\lambda_D(V)$ in the essential gap $(-m\,c^2,m\,c^2)$, where $m$ denotes the mass and $c$ the speed of light. We shall speak of $\lambda_D(V)$ as the \emph{ground state} energy of $\Dirac-V$. In the standard setting, it is expected that $\lambda_D(V)-m\,c^2$ converges to $\lambda_S(V)$ in the non-relativistic limit, \emph{i.e.}, as $c\to+\infty$. It is therefore a natural question to estimate $\lambda_D(V)$ in terms of $\nrm Vp$ and identify the corresponding optimal potential. This question is the main purpose of our paper. A new \emph{critical value} appears, which corresponds to the smallest value of $\nrm Vp$ for which $\lambda_D(V)$ reaches, for some potential $V \ge 0$, the lower end of the essential gap $-\,m\,c^2$. In a linear setting, a similar question has been raised in~\cite{esteban2021diraccoulombII,ELS2021}, where the authors find a critical value $\nu_1$ so that $\lambda_D\big(\mu * | \cdot |^{-1}\big) > -\,m\,c^2$ for all positive measures $\mu$ with $\mu(\R^3) < \nu_1$, with $2/\big(\pi/2+2/\pi\big)<\nu_1\le1$. Going back to~\cite{MR1761368,DDEV,MR2091354}, it is known that \emph{Hardy inequalities} play an essential role in the analysis of the spectrum of Dirac-Coulomb operators. In the present article, except for the case $p=d=1$, we rather find a nonlinear functional inequality of Gagliardo-Nirenberg-Sobolev nature, instead of a Hardy inequality (see comments in Appendix~\ref{appendix:GNS-2}).

It is possible to characterize the eigenvalues of $\Dirac-V$ in the gap by a \emph{min-max principle} according to~\cite{MR2239275,MR1761368,DES2021} but this raises delicate issues involving the domain of the operator and its self-adjoint extensions addressed respectively in~\cite{SchSolTok20,DES2021,MR2370231,MR2466677,MR3960263}. Applied with a Coulombian potential $V$, the method gives rise, after the maximising step in the min-max method, to a lower bounded quadratic form which amounts to a kind of Hardy inequality for the upper component: see~\cite{MR2091354,DDEV,MR2379440} for details. The same strategy applies to a general potential~$V$ under a constraint on $\nrm Vp$, except that the Keller type bound on $\lambda_D(V)$ is given by an implicit condition: see Appendix~\ref{appendix:GNS}. The optimal potential solves a nonlinear Dirac equation with Kerr-type nonlinearity. For the two-dimensional case, this equation has been studied in~\cite{MR4068289,MR3705703,MR3859463,MR3794033} by W.~Borrelli. In the one-dimensional case, the solution is explicit, which allows us to identify it as in the case of the Schr\"odinger operator studied in~\cite{MR121101}. Alternatively to the min-max principle, the properties of the \emph{Birman-Schwinger operator} corresponding to $\Dirac-V$ allows us to characterize $\lambda_D(V)$ and, except in Appendix~\ref{appendix:GNS}, we will adopt this point of view.

\medskip The \emph{Keller-Lieb-Thirring inequality for a Schr\"odinger operator} goes as follows. Let us assume that $q>2$, with $q<2^*:=2\,d/(d-2)$ if $d\ge3$, and let $\vartheta=d\,(q-2)/(2\,q)$. For any function $u\in\mathrm H^1(\R^d)$, the Gagliardo-Nirenberg-Sobolev inequality
\[
\nrm{\nabla u}2^\vartheta\,\nrm u2^{1-\vartheta}\ge\mathscr C_q\,\nrm uq
\]
can be rewritten in the non-scale invariant form as
\be{ScaledGN}
\forall\,(\lambda,u)\in(0,+\infty)\times\mathrm H^1(\R^d)\,,\quad\nrm{\nabla u}2^2+\lambda\,\nrm u2^2\ge\mathsf C_q\,\lambda^{1-\vartheta}\,\nrm uq^2
\ee
with an optimal constant $\mathsf C_q$ such that $\mathscr C_q^2=\vartheta^\vartheta(1-\vartheta)^{1-\vartheta}\,\mathsf C_q$. The equivalence of the two forms can be recovered by optimizing on $\lambda$ in~\eqref{ScaledGN}. There is also an inequality which is dual of~\eqref{ScaledGN} and goes as follows. Consider a potential $V\in\mathrm L^p(\R^d)$. Using H\"older's inequality with exponents $p$ and $q$ such that $1/p+2/q=1$ and $p>d/2$, and taking $\lambda$ so that $\mathsf C_q\,\lambda^{1-\vartheta}=\nrm Vp$, we deduce from~\eqref{ScaledGN} that
\[
\ird{|\nabla u|^2}-\ird{V\,|u|^2}\ge\nrm{\nabla u}2^2-\nrm Vp\,\nrm u q^2\ge-\(\mathsf C_q^{-1}\,\nrm Vp\)^\frac1{1-\vartheta}\,\nrm u 2^2\,.
\]
This is the Keller-Lieb-Thirring estimate for $-\Delta-V$, \emph{i.e.},
\be{KLT}
\forall\,V\in\mathrm L^p(\R^d)\,,\quad0\le\lambda_S^-(V)\le\mathsf K_p\,\nrm Vp^\eta
\ee
where $\eta:=1/(1-\vartheta)=2\,p/(2\,p-d)$ and $\lambda^-:=\max(0,-\lambda)$ denotes the negative part of~$\lambda$. See~\cite{DolEsLa-APDE2014,Dolbeault06082014,Dolbeault2018} for details. An optimization on~$V$ shows that~\eqref{ScaledGN} and~\eqref{KLT} are equivalent. The optimal constant in~\eqref{KLT} is $\mathsf K_p=\mathsf C_q^{-\eta}$. In addition, for all $\lambda >0$, if $u$ is a radial positive solution of
\be{eq:def:NLS_Intro}
-\Delta u-u^{\frac{p+1}{p-1}}=-\,\lambda\,u\,,
\ee
then $(u, \lambda)$ is an optimal pair for~\eqref{ScaledGN}, and $V :=u^{q-2}=u^{2/(p-1)}$ is an optimal potential for~\eqref{KLT}, which moreover satisfies $\lambda_S(V)=-\,\lambda$.
It turns out that the solution of~\eqref{eq:def:NLS_Intro} is unique up to translations according to~\cite{Coffman-72, Kwong-89, McLeod-93} and can be explicitly computed if $d=1$: see~\cite{MR121101}, or~\cite{Dolbeault06082014} and references therein for additional related results.

\medskip In order to state a \emph{Keller-Lieb-Thirring inequality for the Dirac operator}, we need some definitions and preliminary properties. Let us start with the free Dirac operator on $\R^d$. We refer to~\cite{MR1219537} for a comprehensive list of results and properties. For simplicity, we choose units in which $c=1$, except in Appendix~\ref{appendix:GNS} in which we consider the non-relativistic limit as $c\to+\infty$. Let $d\ge 1$ and set $N :=2^{\lfloor(d+1)/2\rfloor}$ where $\lfloor x\rfloor=\max\{n\in\Z\,:\,n\le x\}$ denotes the integer part of $x$. Let $\alpha_1, \cdots, \alpha_d$ and~$\beta$ be $N \times N$ Hermitian matrices satisfying the following anti-commutation rules
\be{eq:def:alphai_beta}
\forall\,j,\,k=1,\dots,d\,,\quad\begin{cases}
\alpha_j\,\alpha_k+\alpha_k\,\alpha_j=2\,\delta_{jk}\,\bbI_N\\
\alpha_j\,\beta+\beta\,\alpha_j=0\\
\beta^2=\bbI_N
\end{cases}
\ee
where $\delta_{jk}$ denotes the Kronecker symbol and $\bbI_N$ is the $N\times N$ identity matrix. See, \emph{e.g.},~\cite{Friedrich_2000} for an existence result for such matrices. The \emph{free Dirac operator} in dimension~$d$ is defined by
\[
\Dirac:=\sum_{j=1}^d\alpha_j\,(-\,\ri\,\partial_j)+m\,\beta=\boldsymbol\alpha\cdot(-\,\ri\,\nabla)+m\,\beta
\]
where we consider Cartesian coordinates $(x_1,\dots,x_d)$, $\partial_j:=\partial/\partial x_j$ and $\boldsymbol\alpha=(\alpha_k)_{k=1,\dots,d}$. With the Pauli matrices
\[\textstyle
\sigma_1:=\begin{pmatrix}0&1\\1&0\end{pmatrix}\,,\quad\sigma_2:=\begin{pmatrix}0&-\,\ri\\\ri&0\end{pmatrix}\quad\mbox{and}\quad\sigma_3:=\begin{pmatrix}1&0\\0&-1\end{pmatrix}\,,
\]
explicit expressions of $\Dirac$ are given
\begin{enumerate}[label=\emph{(\roman*)}]
\item in dimension $d=1$, by $\boldsymbol\alpha=\sigma_2$ and $\beta=\sigma_3$ so that
\[
\Dirac:=\sigma_2\,(-\,\ri\,\partial_1)+m\,\sigma_3\,,
\]
\item in dimension $d=2$, by $\boldsymbol\alpha=(\sigma_j)_{j=1,2}$ and $\beta=\sigma_3$ so that
\[
\Dirac:=\sum_{j=1}^2\sigma_j (-\,\ri\,\partial_j)+m\,\sigma_3\,,
\]
\item in dimension $d=3$, by $\boldsymbol\alpha=(\alpha_k)_{k=1,2,3}$ and $\beta$ such that
\[
\alpha_k:=\begin{pmatrix}0&\sigma_k\\\sigma_k&0\end{pmatrix}\quad\mbox{and}\quad\beta:=\begin{pmatrix}\bbI_2&0\\0&-\bbI_2\end{pmatrix}\,.
\]
\end{enumerate}
The free Dirac operator satisfies $\Dirac^2=-\,\Delta+m^2$. It is self-adjoint on $\mathrm L^2(\R^d,\C^N)$, with domain
\[
\Dom(\Dirac)=\mathrm H^1(\R^d,\C^N)
\]
and spectrum
\[
\sigma(\Dirac)=\sigma_{\rm ess}(\Dirac)=(-\infty,-m]\cup[m,+\infty)\,.
\]
Next we consider \emph{Dirac operators $\Dirac-V$ with potentials} $V\in\mathrm L^p(\R^d,\R^+)$ where the notation $\Dirac-V$ denotes $\Dirac-V\mathbb{I}_N$. When switching on a potential $V$, we expect that some eigenvalues of $\Dirac-V$ emerge from the upper essential spectrum $[m,+\infty)$. We shall prove in Section~\ref{Sec:Properties} that $\Dirac-V$ can be defined as a self-adjoint operator with \emph{essential spectrum} $\sigma_{\rm ess}(\Dirac-V)=\sigma_{\rm ess}(\Dirac)$. This allows us to define the \emph{ground state} $\lambda_D(V)$ as the lowest eigenvalue in the gap $(-\,m,m)$.

\medskip Our first result states that the \emph{ground state} is bounded by a function of $\nrm Vp$. Let
\be{eq:min_vap_Dirac}
\Lambda_D(\alpha,p):=\inf\Big\{\lambda_D(V)\,:\,V\in\mathrm L^p(\R^d,\R^+)\;\mbox{and}\;\nrm Vp=\alpha\Big\}\,.
\ee
\begin{theorem}\label{Thm:Main1} Assume that $p\ge d\ge 1$. There exists $\alpha_\star(p)>0$ such that the map $\alpha\mapsto\Lambda_D(\alpha,p)$ defined on $\big[0,\alpha_\star(p)\big)$ is continuous, strictly decreasing, takes values in $(-\,m,m]$, and such that
\[\label{Limits}
\lim_{\alpha\to0_+}\Lambda_D(\alpha,p)=m\quad\mbox{and}\quad\lim_{\alpha\to\alpha_\star(p)}\Lambda_D(\alpha,p)=-\,m\,.
\]
Moreover, if $(p,d)\neq(1,1)$, the infimum~\eqref{eq:min_vap_Dirac} is attained on $\big(0,\alpha_\star(p)\big)$ and
\[
\forall\,\alpha\in\big(0,\alpha_\star(p)\big)\,,\quad \Lambda_D(\alpha,p)=\lambda_D(V_{\alpha,p})
\]
where $V_{\alpha,p}=|\Psi|^{2/(p-1)}$, and $\Psi\in\mathrm L^2(\R^d,\C^N)$ solves the \emph{nonlinear Dirac equation}
\be{eq:def:NLD_Intro}
\Dirac\Psi-|\Psi|^\frac2{p-1}\,\Psi=\Lambda_D(\alpha,p)\,\Psi
\ee
and satisfies the constraint $\ird{|\Psi|^{2\,p/(p-1)}}=\nrm{V_{\alpha,p}}p^p=\alpha^p$.
\end{theorem}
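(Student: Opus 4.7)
My plan is to recast the ground-state problem via the Birman-Schwinger reformulation (as set up in Section~\ref{Sec:Properties}) and then use a scaling and continuity argument. For each $\lambda\in(-m,m)$ and $V\in\mathrm L^p(\R^d,\R^+)$, introduce
\[
K_\lambda(V):=\sqrt V\,(\Dirac-\lambda)^{-1}\sqrt V\,,
\]
which is bounded self-adjoint on $\mathrm L^2(\R^d,\C^N)$ and compact for $p\ge d$. The Birman-Schwinger principle characterizes $\lambda_D(V)$ as the smallest $\lambda\in(-m,m)$ for which $1\in\sigma(K_\lambda(V))$. Writing $\mu(\lambda,V)$ for the relevant largest eigenvalue of $K_\lambda(V)$ (the branch that tracks the ground state), this reads $\mu(\Lambda_D(\alpha,p),V_{\alpha,p})=1$ at optimality.

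I would next establish that $\lambda\mapsto\mu(\lambda,V)$ is continuous and strictly increasing on $(-m,m)$ via a Hellmann-Feynman identity, and that $V\mapsto\mu(\lambda,V)$ is continuous on $\mathrm L^p$. Since $K_\lambda(tV)=t\,K_\lambda(V)$, the eigenvalue scales as $\mu(\lambda,tV)=t\,\mu(\lambda,V)$ for $t>0$, and therefore
\[
\sup\{\mu(\lambda,V):\nrm Vp=\alpha\}=\alpha\,\nu(\lambda)\quad\mbox{with}\quad\nu(\lambda):=\sup\{\mu(\lambda,V):\nrm Vp=1\}\,.
\]
Solving the implicit relation $\alpha\,\nu(\Lambda_D(\alpha,p))=1$ gives $\Lambda_D(\alpha,p)=\nu^{-1}(1/\alpha)$. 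Continuity, strict monotonicity in $\alpha$, and the two boundary limits then follow, with the critical value $\alpha_\star(p):=1/\lim_{\lambda\to -m^+}\nu(\lambda)$. The non-trivial claim $\alpha_\star(p)>0$ amounts to the uniform bound $\nu(\lambda)\le C_{p,d}<\infty$ on $(-m,m)$, which I would derive via H\"older and Gagliardo-Nirenberg-Sobolev estimates applied to the factorization $(\Dirac-\lambda)^{-1}=(\Dirac+\lambda)(-\Delta+m^2-\lambda^2)^{-1}$; the assumption $p\ge d$ is precisely what permits controlling the Schr\"odinger-type factor uniformly in $\lambda$.

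The main obstacle is the existence of a maximizer of $\nu(\lambda)$ when $(p,d)\neq(1,1)$. I would fix $\lambda=\Lambda_D(\alpha,p)$ and pick a maximizing sequence $(V_n,\phi_n)$ with $\nrm{V_n}p=1$, $\nrm{\phi_n}2=1$, $K_\lambda(V_n)\phi_n=\mu_n\phi_n$ and $\mu_n\to\nu(\lambda)$, then apply concentration-compactness to the densities $V_n^p$ (modulo translations): strict positivity $\nu(\lambda)>0$ rules out vanishing, while strict subadditivity of $\nu$ (established by testing with pairs of well-separated profiles) rules out dichotomy, leaving concentration. Subcriticality in the regime $(p,d)\neq(1,1)$ then upgrades this into strong $\mathrm L^p$-convergence of $V_n$ up to translations, producing a maximizer $V_{\alpha,p}$ and a companion $\mathrm L^2$-eigenfunction of $\Dirac-V_{\alpha,p}$. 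The Euler-Lagrange conditions obtained by varying $V$ under the constraint $\nrm Vp=\alpha$ (via the Feynman-Hellmann formula $\delta\lambda_D=-\ird{|\Psi|^2\,\delta V}$) force $V_{\alpha,p}=c\,|\Psi|^{2/(p-1)}$ for some $c>0$. Setting $\Psi:=(\Dirac-\lambda)^{-1}\sqrt{V_{\alpha,p}}\,\phi$ and rescaling $\Psi\mapsto t\Psi$ with $t=c^{-(p-1)/2}$ absorbs $c$ and yields~\eqref{eq:def:NLD_Intro} together with the normalization $\ird{|\Psi|^{2p/(p-1)}}=\alpha^p$. The critical pair $(p,d)=(1,1)$ is excluded because the scaling there is precisely such that this compactness argument breaks down.
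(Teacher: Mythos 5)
Your overall route is the same as the paper's: pass to the Birman--Schwinger operator, use the homogeneity $K_\lambda(tV)=t\,K_\lambda(V)$ to reduce everything to the normalized supremum $\nu(\lambda)=\cN(\lambda,p)$, invert the monotone relation $\alpha\,\nu(\Lambda_D)=1$, and obtain the optimizer by concentration-compactness plus an Euler--Lagrange computation. The one genuine structural difference is that the paper does not run concentration-compactness on the pair $(V_n,\phi_n)$: it first substitutes $w=\sqrt W\,\phi$, which collapses the double supremum into a single quadratic maximization of $\langle w,R_0(\lambda)\,w\rangle$ over $\nrm wq=1$ with $q=2p/(p+1)\in(1,2)$. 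This buys two things at once: the value function then scales as $J(s)=s^{2/q}J(1)$ with $2/q>1$, so the strict binding inequality $J(s+s')>J(s)+J(s')$ is immediate, and the Euler--Lagrange equation $R_0(\lambda)w=\tau\,|w|^{-2/(p+1)}w$ produces the nonlinear Dirac equation without having to justify a Feynman--Hellmann differentiation of $\lambda_D(V)$ under the one-sided constraint $V\ge0$.

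In your formulation the dichotomy step, as you describe it, would not go through. With the constraint measured by $s=\nrm{V}p^p$, homogeneity gives $\sup\{\mu_1(K_\lambda(V)):\nrm Vp^p=s\}=s^{1/p}\,\nu(\lambda)$, which is \emph{concave} in $s$, hence genuinely subadditive; ``strict subadditivity'' is the inequality that splitting always satisfies and it excludes nothing. What saves the argument in the $(V,\phi)$ picture is different: for two well-separated bumps the Birman--Schwinger operator nearly decouples into a direct sum, so its top eigenvalue is close to the \emph{maximum} (not the sum) of the two individual top eigenvalues, and $\max(\theta^{1/p},(1-\theta)^{1/p})<1$ for $\theta\in(0,1)$ yields the contradiction. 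You would need to prove the near-decoupling (smallness of the cross terms $\sqrt{V_1}\,R_0(\lambda)\sqrt{V_2}$, using the exponential decay of the resolvent kernel) to make this rigorous. Two smaller points: a bound on $\nu(\lambda)$ \emph{uniform on all of} $(-m,m)$ is impossible since $\nu(\lambda)\to+\infty$ as $\lambda\to m$ (and must, for $\lim_{\alpha\to0_+}\Lambda_D=m$); for $\alpha_\star(p)>0$ you only need finiteness at one $\lambda$, which propagates to a neighbourhood of $-m$ by monotonicity. And the paper additionally shows $\lim_{\lambda\to-m}\cN(\lambda,p)>0$ (hence $\alpha_\star(p)<\infty$), via a test spinor built in the spectral subspace $\1_{m<\Dirac<2m}$; your sketch leaves the finiteness of $\alpha_\star(p)$ unaddressed.
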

\noindent The proof of Theorem~\ref{Thm:Main1} is given in Section~\ref{sec:proof:alpha} and relies on the properties of the inverse map of $\alpha \mapsto \Lambda_D(\alpha, p)$ defined by
\be{eq:def:alpha0}
\alpha_D(\lambda,p):=\inf\Big\{\nrm Vp\,:\,V\in\mathrm L^p(\R^d,\R^+)\;\mbox{and}\;\lambda_D(V)=\lambda\Big\}\,.
\ee
The critical value is $\alpha_\star(p)=\lim_{\lambda\to(-m)_+}\alpha_D(\lambda,p)$. It is such that
\[
\lim_{\alpha\to\alpha_\star(p)_-}\lambda_D(V_{\alpha,p})=-\,m
\]
and this limit is the upper bound of the lower essential spectrum $(-\infty,-m]$ or, equivalently, the lower end of the gap. For sake of simplicity, we adopt the convention that $\alpha_\star(p)=\alpha_D(-\,m,p)$. In the subcritical range of potentials, a simple consequence of Theorem~\ref{Thm:Main1} is the following \emph{Keller-Lieb-Thirring estimate} for the Dirac operator \hbox{$\Dirac-V$}.
\begin{corollary}\label{Cor:KLT} Assume $p\ge d\ge 1$. For all $V\in\mathrm L^p(\R^d,\R^+)$ with $\nrm Vp<\alpha_\star(p)$, we
have the optimal bound
\be{KLT-Dirac}
-\,m\le\Lambda_D\(\nrm Vp,p\)\le\lambda_D(V)\le m\,.
\ee
If $(p,d)\neq(1,1)$, then $V_{\alpha,p}$ as in Theorem~\ref{Thm:Main1} realizes the equality case, \emph{i.e.}, $\lambda_D(V_{\alpha,p})=\Lambda_D\(\alpha,p\)$.
\end{corollary}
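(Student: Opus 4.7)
The plan is to read the statement directly off Theorem~\ref{Thm:Main1}; Corollary~\ref{Cor:KLT} is an essentially tautological repackaging once the underlying theorem is in hand. The middle inequality $\Lambda_D\(\nrm Vp,p\)\le\lambda_D(V)$ is immediate from the definition~\eqref{eq:min_vap_Dirac}: taking $\alpha:=\nrm Vp$, the potential $V$ itself competes in the infimum defining $\Lambda_D(\alpha,p)$, which is therefore bounded above by $\lambda_D(V)$. No further work is needed for this step.

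For the outer bounds, I would invoke the range statement of Theorem~\ref{Thm:Main1}. On $[0,\alpha_\star(p))$, the function $\Lambda_D(\cdot,p)$ takes values in $(-\,m,m]$, so under the subcritical hypothesis $\nrm Vp<\alpha_\star(p)$ one obtains even the strict inequality $\Lambda_D\(\nrm Vp,p\)>-\,m$, a fortiori the non-strict lower bound in~\eqref{KLT-Dirac}. The upper bound $\lambda_D(V)\le m$ is built into the very definition of $\lambda_D(V)$ as the lowest eigenvalue of $\Dirac-V$ in the gap $(-\,m,m)$, extended by the convention adopted just before Theorem~\ref{Thm:Main1} which assigns the value $m$ when no eigenvalue has yet emerged from the upper essential spectrum.

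Finally, when $(p,d)\neq(1,1)$, the attainment part of Theorem~\ref{Thm:Main1} supplies a potential $V_{\alpha,p}=|\Psi|^{2/(p-1)}$ solving~\eqref{eq:def:NLD_Intro} and realising $\Lambda_D(\alpha,p)=\lambda_D(V_{\alpha,p})$, which is exactly the equality case claimed in the corollary. I do not foresee any substantive obstacle here, since all the analytic content (existence of $\alpha_\star(p)$, continuity and strict monotonicity of $\Lambda_D(\cdot,p)$, attainment of the infimum, and identification of the optimiser via the nonlinear Dirac equation) has already been absorbed into Theorem~\ref{Thm:Main1}. The only minor checkpoint is to confirm that the admissible range $\nrm Vp<\alpha_\star(p)$ appearing in~\eqref{KLT-Dirac} matches the interval $[0,\alpha_\star(p))$ on which the properties of $\Lambda_D(\cdot,p)$ have been established, which is immediate from the definitions.
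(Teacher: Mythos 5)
Your proposal is correct and matches the paper's intent exactly: the paper presents Corollary~\ref{Cor:KLT} as a direct consequence of Theorem~\ref{Thm:Main1} without a separate proof, and your unpacking (the middle inequality from the definition of the infimum in~\eqref{eq:min_vap_Dirac}, the outer bounds from the range $(-\,m,m]$ of $\Lambda_D(\cdot,p)$ and the definition of $\lambda_D(V)$ in the gap, and the equality case from the attainment statement) is precisely the intended argument.
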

\noindent Some plots of $\alpha \mapsto \Lambda_D(\alpha, p)$ are displayed in Fig.~\ref{fig:alphac_d1} (Right).

\medskip The nonlinear Dirac equation~\eqref{eq:def:NLD_Intro} plays for the Dirac operator \hbox{$\Dirac-V$} the same role as~\eqref{eq:def:NLS_Intro} for the Schr\"odinger operator $-\Delta+V$. However, $\Lambda_D(\alpha,V)$ is not obtained as the infimum but as a critical point of a Rayleigh quotient with infinitely many negative directions corresponding to a min-max principle (see~\cite{MR1761368}) and for this reason there is no simple interpolation inequality such as~\eqref{ScaledGN} in the case the Dirac operator. A more involved functional inequality holds: see Appendix~\ref{appendix:GNS}.

Nonlinear Dirac equations have been introduced to model extended fermions, as effective operators for nonlinear effects in graphene-like materials or Bose-Einstein condensates: see~\cite[Section~1.6]{Esteban_2008} and~\cite[Introduction]{MR3705703} for an introduction to the literature. Since the spinors in the Dirac equation have at least two components, many types of nonlinearities can be considered (see,~\emph{e.g.},~\cite{Pelinovsky-10} and references therein) and give rise to various phenomena. For instance, localized solutions to a nonlinear equation of the form 
\[
\Dirac \Psi-G(\Psi)=\lambda\,\Psi
\]
for some function $G : \C^N \mapsto \C^N$ correspond to solitary wave solutions to the time-depen\-dent nonlinear Dirac equation and have attracted considerable attention: see, \emph{e.g.},~\cite{CazenaveVasquez_1986, BaladaneCazenaveDouady_1988, Merle_1988, Esteban_1995}.

It is a common assumption to consider a nonlinearity that preserves Lorentz, or particle-hole, symmetry. Such a non-linearity takes the form
\be{eq:Soler_type}
\Dirac\Psi-F\big(\left\langle\Psi,\beta\,\Psi\right\rangle_{\C^N}\big)\,\Psi=\lambda\,\Psi
\ee
and is called the \emph{Soler-type nonlinearity}. The Soler nonlinearity formally appears when minimizing the first positive eigenvalue of $\Dirac-\beta\,V$ but will not be studied in this paper. In contrast, the nonlinearity that appears in~\eqref{eq:def:NLD_Intro} is of the form $F(\langle \Psi,\Psi\rangle_{\C^N})\,\Psi$, which is sometimes called a \emph{Kerr-type nonlinearity} as in~\cite{MR3705703}, apparently by extension of the cubic nonlinearity used in optics. Existence of localized solutions for~\eqref{eq:def:NLD_Intro} is studied in~\cite{MR3705703} in the critical exponent case $p=d=2$, and in~\cite{MR4068289} in the critical exponent case $p=d$ for all dimensions $d\in\N$ with \hbox{$m=0$}. Our results give an independent proof of the existence of a localized solution.

In~\cite{BaladaneCazenaveDouady_1988, Esteban_1995}, the authors proved that equations of the form~\eqref{eq:Soler_type} have many solutions if $d\ge 2$ by looking for solutions of~\eqref{eq:Soler_type} in subspaces of fixed angular momentum. It seems that similar techniques could also be applied to~\eqref{eq:def:NLD_Intro}. While it is reasonable to expect that the optimal potential is radially symmetric and the corresponding \emph{ground state} $\Psi$ is the solution with lowest positive angular momentum and smallest number of oscillations, this is so far an open question: see Appendix~\ref{appendix:open}. In Appendix~\ref{appendix:radial}, we also give numerical results that point in this direction.

\medskip We now focus on the dimension $d=1$. It turns out that one can completely solve~\eqref{eq:def:NLD_Intro} using special functions. Explicit formulae are given below, where $B$ and $_2F_1$ respectively denote the Euler \emph{Beta} function and the hypergeometric function.
\begin{theorem} \label{th:alphac_d=1}
Let $d=1$ and $p\in(1,+\infty)$. For all $\lambda\in[-\,m,m]$, the equation
\[
\Dirac \Psi-| \Psi |^\frac2{p-1}\,\Psi=\lambda\,\Psi\quad \mbox{with}\quad \Dirac :=\begin{pmatrix}
m&\partial_x\\-\,\partial_x&-\,m
\end{pmatrix}
\]
has a unique solution $\Psi\in\mathrm L^2(\R,\C^2)\setminus\{0\}$, up to a phase factor and a translation. Up to a translation, $V=|\Psi|^{2/(p-1)}$ is even, decreasing on $\R^+$ and such that $\alpha_D(\lambda,p)=\nrm Vp$.
\begin{itemize}
\item \emph{Subcritical regime $\lambda>-\,m$.} With $\mathsf A:=\frac p{p-1}\,\big(m^2-\lambda^2\big)$, $\mathsf B:=\frac 2{p-1}\,\sqrt{m^2-\lambda^2}$ and $z_0:=\frac{m-\lambda}{m+\lambda}$, we have
\be{eq:explicit_V_noncritical}
\forall\,x\in\R\,,\quad V(x)=\dfrac{\mathsf A}{m\,\cosh(\mathsf B\,x)+\lambda}
\ee
and $\big(\alpha_D(\lambda,p)\big)^p=p^p\,\big(\tfrac{m+\lambda}{p-1}\big)^{p-1}\,z_0^{p-1/2}\,B\(\tfrac12,p\)\,_2F_1\(\tfrac12,p;p+\tfrac12;-z_0\)$.
\item \emph{Critical case $\lambda=-\,m$.} With $\zeta=2\,m/(p-1)$, we have
\be{eq:explicit_V_critical}
\forall\,x\in\R\,,\quad V(x):=\frac{\zeta\,p}{1+\zeta^2\,x^2}
\ee
and $\big(\alpha_\star(p)\big)^p=p^p\,\big(\tfrac{2\,m}{p-1}\big)^{p-1}\,B\big(\tfrac12,p-\tfrac12\big)$.\end{itemize}
If $p=d=1$, then $\alpha_D(\lambda,1)=\arccos(\lambda/m)$ and
\[
\lim_{\lambda\to(-m)_+}\alpha_D(\lambda,1)=\lim_{p\to1_+}\alpha_\star(p)=\pi\,.
\]
\end{theorem}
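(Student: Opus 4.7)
The plan is to reduce the one-dimensional nonlinear Dirac equation to a real planar ODE system that can be integrated explicitly for $p>1$, and to treat the degenerate case $p=1$ by a concentration argument. Writing $\Psi=(u,v)^T\in\C^2$ and exploiting that the coefficients of the system are real, one checks directly from the equation that the quantity $\Im(\bar u\,v)$ is conserved in $x$; since it vanishes at $\pm\infty$ for any $\mathrm L^2$ solution, it vanishes identically, and a short argument then shows that $u$ and $v$ share a globally constant complex phase. Factoring this phase out, we may assume $u,v$ are real-valued, in which case the equation becomes
\begin{align*}
v'&=(\lambda+V-m)\,u,\\
u'&=-(\lambda+V+m)\,v,
\end{align*}
with $V=(u^2+v^2)^{1/(p-1)}$, and the functional
\[
\mathcal E:=m\,(u^2-v^2)-\lambda\,(u^2+v^2)-\tfrac{p-1}p\,(u^2+v^2)^{p/(p-1)}
\]
is a first integral which vanishes at $\pm\infty$, hence identically. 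Combining $\mathcal E\equiv 0$ with the identities $(u^2+v^2)'=-4\,m\,u\,v$ and $u^2\,v^2=\tfrac14\big[(u^2+v^2)^2-(u^2-v^2)^2\big]$, and rewriting in terms of $V$, yields the autonomous first-order ODE
\[
(V')^2=\frac{4\,V^2}{(p-1)^2}\,\Big[m^2-\Big(\lambda+\tfrac{p-1}p\,V\Big)^2\Big].
\]

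For every $\lambda\in(-m,m]$, standard phase-plane analysis of this equation produces a unique non-trivial $\mathrm L^2$ profile up to translation, reaching its maximum $V_0=p(m-\lambda)/(p-1)$ at the single point where $\lambda+\tfrac{p-1}p\,V=m$; invariance under $x\mapsto -x$ forces the profile to be even about that point and strictly decreasing on $\R^+$. Uniqueness of the localised solution of the nonlinear Dirac equation itself then follows by reconstructing $(u,v)$ from $V$ through $\mathcal E\equiv 0$ and $u\,v=-(p-1)\,V^{p-2}\,V'/(4\,m)$, a discrete sign ambiguity being absorbed in the global phase. The explicit formulas~\eqref{eq:explicit_V_noncritical} and~\eqref{eq:explicit_V_critical} are then verified by direct substitution, the critical profile being also recovered by letting $\lambda\to -m_+$ in the subcritical formula.

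The Lebesgue norm is computed by changing variables from $x$ to $V$, legitimate by monotonicity on $\R^+$, and then substituting $w=\lambda+\tfrac{p-1}p\,V$ to obtain
\[
\nrm Vp^p=\frac{p^p}{(p-1)^{p-1}}\int_\lambda^m\frac{(w-\lambda)^{p-1}}{\sqrt{m^2-w^2}}\,\rd w.
\]
The affine change $w=\lambda+(m-\lambda)\,t$ turns this into Euler's integral representation of $_2F_1$, producing the announced subcritical formula, while the direct evaluation $\int_\R(1+\zeta^2\,x^2)^{-p}\,\rd x=\zeta^{-1}\,B(1/2,p-1/2)$ yields the critical one.

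The case $p=d=1$ demands a separate treatment, because the Kerr nonlinearity degenerates and no regular $\mathrm L^1$ minimiser exists. For the upper bound $\alpha_D(\lambda,1)\le\arccos(\lambda/m)$, I would approximate by a narrow rectangular barrier $V_\varepsilon=\tfrac\alpha{2\,\varepsilon}\,\1_{[-\varepsilon,\varepsilon]}$: solving the piecewise constant Dirac eigenvalue problem, matching at $x=\pm\,\varepsilon$ and letting $\varepsilon\to 0_+$ yields the transcendental relation $\lambda_D(V_\varepsilon)\to m\,\cos\alpha$. The matching lower bound can be derived by passing to the limit $p\to 1_+$ in the subcritical formula, using $(p-1)^{1-p}\to 1$, $B(1/2,p)\to 2$ and $_2F_1(1/2,1;3/2;-z_0)=\arctan\sqrt{z_0}/\sqrt{z_0}$, so that $\alpha_D(\lambda,p)\to 2\arctan\sqrt{(m-\lambda)/(m+\lambda)}=\arccos(\lambda/m)$, combined with a Birman-Schwinger comparison to transfer the bound to $p=1$. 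The main obstacle I anticipate lies precisely in this last step: the information available for $p>1$ lives in a variational/ODE framework that no longer applies at $p=1$, and distributional perturbations of $\Dirac$ admit several inequivalent self-adjoint realisations, so the correct one must be singled out through the approximating sequence rather than declared.
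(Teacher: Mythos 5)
Your argument follows essentially the same route as the paper's: the two conserved quantities (your $\Im(\bar u v)$ and $\mathcal E$ are the paper's $G$ and $-H$) reduce the problem to real-valued solutions and the same autonomous ODE $(V')^2=\tfrac{4V^2}{(p-1)^2}\big[m^2-(\lambda+\tfrac{p-1}{p}V)^2\big]$, whose phase-plane analysis, explicit integration, and change of variables to Euler's integral for $_2F_1$ reproduce the paper's computation of $\nrm Vp$. The only divergence is the endpoint $p=d=1$, where you propose a rectangular-barrier approximation while the paper takes the limit $p\to1_+$ in the explicit formula and relegates the rigorous two-sided argument (via a Pr\"ufer-type angle variable and a concentration analysis) to its Appendix~\ref{App:D}; your sketch there is plausible but, as you note yourself, not carried out.
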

\begin{figure}[ht]
\begin{subfigure}{0.55\textwidth}
\includegraphics[width=1\textwidth]{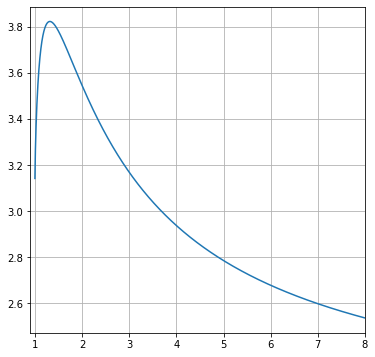}
\end{subfigure}
\begin{subfigure}{0.4\textwidth}
\includegraphics[width=1\textwidth]{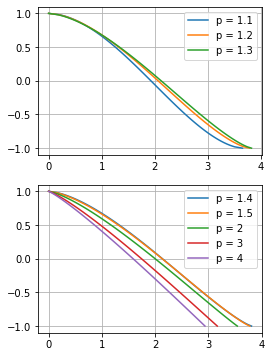}
\end{subfigure}
\caption{Let $d=1$ and $m=1$. (Left) The function $p\mapsto\alpha_\star(p)$, with maximum at $p \approx 1.32$, satisfies $\lim_{p \to1_+}\alpha_\star(p)=\pi$ and $\lim_{p\to+\infty}\alpha_\star(p)=2$. (Right) For various values of $p$, the maps $\alpha \mapsto \Lambda_D(\alpha, p)$ take value $-1$ at $\alpha=\alpha_\star(p)$. Upper (resp.~lower) right plots correspond to $p<1.32$ (resp.~$p>1.32$).}
\label{fig:alphac_d1}
\end{figure}
\noindent See Fig.~\ref{fig:alphac_d1}. With the notations of Theorem~\ref{Thm:Main1} and $\alpha=\alpha_D(\lambda,p)$, up to translations, we know that $V=V_{\alpha,p}$ in~\eqref{eq:explicit_V_noncritical} and~\eqref{eq:explicit_V_critical}. For the proof of Theorem~\ref{th:alphac_d=1} and some additional details, see Section~\ref{Sec:Explicit1}. Formally as $p\to1_+$, the potential given by~\eqref{eq:explicit_V_noncritical} converges to a \emph{delta} Dirac distribution at $x=0$ of mass $\arccos(\lambda/m)$ (see~\cite{MR1009526} for the study of self-adjoint extensions of $\Dirac - \alpha\,\delta_0$). A remarkable consequence of the estimate in the case $p=d=1$ is the Keller-Lieb-Thirring inequality
\be{cos}
m\,\cos\(\nrm V1\)\le\lambda_D(V)\le m
\ee
for any nonnegative potential $V\in\mathrm L^1(\R)$ with $\| V \|_{1} \le \pi$. See Appendix~\ref{App:D} for a result on optimality cases in the case $p=d=1$, with a proof.

The case of Theorem~\ref{th:alphac_d=1} presents some similarities with the results of~\cite{esteban2021diraccoulombII}: in the case $p=1$, it is expected that optimality is achieved only by singular measures. Our goals differ from those of~\cite{esteban2021diraccoulombII} as we adopt the point of view of functional interpolation inequalities with Keller-type estimates as a subproduct, while~\cite{esteban2021diraccoulombII} is concerned with the issue of the optimal charge distribution for a Dirac-Coulomb equation. In terms of methods, there are many similarities since we use Birman-Schwinger reformulations as well as classical tools of the concentration-compactness method. However, there are also significant differences because requesting that the potential is in $\mathrm L^p(\R^d)$ means that the optimal $V$ is obtained through a non-linear Dirac equation which is not measure-valued as soon as $p>1$.

\medskip Our results are not limited to estimates for the ground state and we also have a \emph{Lieb-Thirring inequality} for the sum of eigenvalues {\em in the gap} $(-\,m, m)$ of Dirac operators of the form $\Dirac - V$ with $V \in \mathrm L^p(\R^d, \R^+)$. We denote by $-\,m < \lambda_1\le \lambda_2 \le \cdots < m$ the possibly infinite sequence of eigenvalues in the gap $(-\,m,m)$, and write
\[
e_k = e_k(m, V) := (m - \lambda_k) > 0\,,
\]
so that $2\,m > e_1 \ge e_2 \ge \cdots > 0$. The quantity $e_k$ is the distance between the eigenvalue~$\lambda_k$ and the bottom of the upper essential spectrum $+\,m$.
\begin{theorem}\label{thm:LT-bound} For all $\gamma > d/2$ and $p \in (d, \gamma+d/2]$, there is a constant $L_{\gamma, d, p} > 0$ so that, for all $V \in \mathrm L^p(\R^d, \R^+)$, and all $m > 0$, we have
\be{eq:LT_RieszMean}
\sum_{k\ge1} e_k^\gamma(m,V) \le L_{\gamma, d, p}\,m^\frac d2\int_{\R^d} V_m^{\gamma + \frac d2 - p}\,V^{p}\,\rd x \quad \text{with} \quad V_m := \min \left\{ m, V \right\}\,.
\ee
\end{theorem}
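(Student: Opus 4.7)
The plan is to adapt the classical Birman--Schwinger proof of the Lieb--Thirring inequality for Schr\"odinger operators. I would express the Riesz mean as an integral of an eigenvalue count, bound the count by a Schatten norm of the Birman--Schwinger operator, and finally balance the small-$V$ and large-$V$ regimes to recover the $V_m$ truncation on the right-hand side.

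The layer-cake representation gives
\[
\sum_{k\ge 1} e_k^\gamma(m,V) \,=\, \gamma \int_0^{2m} e^{\gamma-1}\, \mathcal{N}(e)\, \rd e, \qquad \mathcal{N}(e) := \#\bigl\{k : \lambda_k(\Dirac - V) \le m - e\bigr\},
\]
reducing the task to a good upper bound on $\mathcal{N}(e)$. For $\mu = m - e \in (-m,m)$, the self-adjoint Birman--Schwinger operator $K(\mu) := V^{1/2}(\Dirac - \mu)^{-1} V^{1/2}$ satisfies $\partial_\mu K(\mu) = V^{1/2}(\Dirac - \mu)^{-2} V^{1/2} \ge 0$, so its eigenvalues are non-decreasing in $\mu$, and the monotonic Birman--Schwinger principle used throughout the paper identifies $\mathcal{N}(e)$ with the number of eigenvalues of $K(\mu)$ strictly greater than $1$. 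In particular $\mathcal{N}(e) \le \mathrm{tr}\,(K(\mu))_+^{\,2s} \le \mathrm{tr}\,|K(\mu)|^{2s}$ for every $s \ge 1/2$.

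To estimate this trace I would use the factorisation $(\Dirac - \mu)^{-1} = (\Dirac + \mu)(-\Delta + \kappa^2)^{-1}$, with $\kappa := \sqrt{m^2 - \mu^2} = \sqrt{e\,(2m - e)}$, together with a Kato--Seiler--Simon Schatten bound for products of the form $V^{1/2}\,g(-\ri\nabla)\,V^{1/2}$. Provided $s > d$, so that the Dirac resolvent kernel is locally $L^{2s}$ at the diagonal, this schematically yields $\mathrm{tr}\,|K(\mu)|^{2s} \lesssim \kappa^{d-2s}\,\nrm{V}{s}^{\,2s}$. Choosing $s = p$ and inserting in the Riesz mean gives a converging $e$-integral precisely in the range $\gamma > d/2$ and $d < p \le \gamma + d/2$ imposed in the statement, the lower bound on $p$ coming from the diagonal singularity of the resolvent and the upper bound from convergence near $e = 0$.

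The last step is to refine the crude global bound $\nrm{V}{p}^{\,2p}$ into a pointwise integrand that distinguishes the regions $\{V \le m\}$ and $\{V > m\}$. On $\{V \le m\}$ the non-relativistic behaviour of the Dirac resolvent at scale $\kappa \approx \sqrt{2me}$ recovers, after change of variable, the classical Schr\"odinger Lieb--Thirring integrand $V^{\gamma + d/2}$ together with a prefactor $m^{d/2}$; on $\{V > m\}$ the saturation $e \le 2m$ produces $m^{\gamma + d - p} V^p$; and both regimes are uniformly captured by $m^{d/2}\,V_m^{\gamma + d/2 - p}\,V^p$, as announced in~\eqref{eq:LT_RieszMean}. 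The main obstacle is exactly here: the matrix-valued Dirac resolvent has a $|x-y|^{-(d-1)}$ singularity on the diagonal that must be balanced against the exponential decay at scale $\kappa^{-1}$, and the Kato--Seiler--Simon estimate has to be applied pointwise in $x$, rather than in its crude global form, in order to extract both the correct $p > d$ threshold and the $V_m$ truncation with the right power of $m$.
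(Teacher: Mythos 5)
Your skeleton --- layer-cake formula, Birman--Schwinger counting, and a Kato--Seiler--Simon Schatten bound --- is the same as the first half of the paper's proof, but two essential steps are missing or incorrect. First, the Schatten estimate as you state it, $\mathrm{tr}\,|K(\mu)|^{2s}\lesssim\kappa^{d-2s}\,\nrm{V}{s}^{2s}$ with $s=p$, is quadratic in $\int_{\R^d}V^p\,\rd x$, which is neither what Kato--Seiler--Simon actually gives nor compatible with the target: the right-hand side of \eqref{eq:LT_RieszMean} is additive over widely separated bumps of $V$, so no bound quadratic in $\int V^p$ can imply it. The correct step is $\mathcal N(e)\le\|K(\mu)\|_{\fS_p}^p\le C\,\nrm{V}{p}^p\,\|g_{m,e}\|_p^p\lesssim m^{d/2}\,e^{d/2-p}\int_{\R^d}V^p\,\rd x$, linear in $\int V^p$ (and the stated range then does \emph{not} come out of convergence of the $e$-integral alone: without further input one only gets $C\,m^{\gamma+d-p}\int V^p$ for $p<\gamma+d/2$, with divergence at the endpoint). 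Second, and more importantly, the passage from this crude bound to the refined integrand $V_m^{\gamma+d/2-p}V^p$ is precisely the step you leave schematic, and the mechanism you propose --- a ``pointwise in $x$'' Kato--Seiler--Simon estimate combined with a split into $\{V\le m\}$ and $\{V>m\}$ --- is not an argument; Kato--Seiler--Simon is intrinsically global. The actual key is the Lieb--Thirring truncation $N_e(V)\le N_{e/2}\big([V-e/2]_+\big)$: applying the Schatten bound to the truncated potential, using Fubini and substituting $s=e/(2V(x))$ in the $e$-integral (whose upper limit $2m$ is what generates the cutoff at $V=m$) gives the pointwise bound $\int_0^{2m}e^{\gamma-1+\frac d2-p}\big[V(x)-e/2\big]_+^p\,\rd e\le C\,V(x)^{\gamma+\frac d2}\min\{m/V(x),1\}^{\gamma+\frac d2-p}=C\,V_m(x)^{\gamma+\frac d2-p}V(x)^p$, hence \eqref{eq:LT_RieszMean}, including (after a limiting argument) the endpoint $p=\gamma+d/2$.

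Finally, the truncation inequality $N_e(V)\le N_{e/2}([V-e/2]_+)$ requires the eigenvalue count to be monotone in $V$, which follows from the min-max principle for a semibounded operator but is not available off the shelf for $\Dirac-V$ in the gap. The paper secures it by the operator inequality $R_0(\lambda)\le\big(\sqrt{-\Delta+m^2}-\lambda\big)^{-1}\mathbbm{1}_{\C^N}$, which reduces the count for the Dirac Birman--Schwinger operator to $N$ times the corresponding count for the positive pseudo-relativistic operator $\sqrt{-\Delta+m^2}-m-V$; the whole Lieb--Thirring machinery is then run on that operator. You need this comparison (or a substitute) before the truncation step; your factorization $(\Dirac-\mu)^{-1}=(\Dirac+\mu)(-\Delta+\kappa^2)^{-1}$ is useful for the Schatten estimate but does not provide the required monotonicity.
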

If $V \in \mathrm L^p(\R^d, \R^+) \cap \mathrm L^{\gamma + d/2}(\R^d, \R^+)$, using the inequalities $V_m \le m$ and $V_m \le V$ gives respectively
\[
\sum_{k\ge1} e_k^\gamma \le L_{\gamma, d, p}\,m^{\gamma + d - p} \int_{\R^d}\,V^{p}\,\rd x \quad \text{and} \quad 
\sum_{k\ge1} e_k^\gamma \le L_{\gamma, d, p}\,m^\frac d2 \int_{\R^d} V^{\gamma + \frac{d}{2}}\,\rd x\,.
\]
The inequality~\eqref{eq:LT_RieszMean} is, in some sense, an interpolation between these two critical cases. In the proof, we use rough estimates: the method is constructive but there is a lot of space for improving on the constant $L_{\gamma, d, p}$.

\paragraph{Structure of the paper} This paper is organized as follows. In Section~\ref{Sec:Properties}, we establish some properties of the operator $\Dirac-V$ with $V\in\mathrm L^p(\R^d)$: domain, associated Birman–Schwin\-ger operator and self-adjointness. Section~\ref{sec:proof:alpha} is devoted to the variational problem associated with~\eqref{eq:min_vap_Dirac}, after reformulation in the Birman–Schwinger framework. Theorem~\ref{th:w} is devoted to the existence of an optimal potential $V$ by concen\-tra\-tion-compact\-ness methods (Section~\ref{sec:proof:w}). The regularity of the optimizers is studied in Section~\ref{Sec:Regularity}. Section~\ref{Sec:LT} is devoted to the proof of Theorem~\ref{thm:LT-bound}. Explicit and numerical computations are performed in Section~\ref{Sec:Explicit} in dimensions $d=1$, $2$ and $3$. Open questions, numerical observations, remarks on the non-relativistic limit and Gagliardo-Nirenberg-Sobolev inequalities, and a result in the case $p=d=1$ are collected in Appendices~\ref{appendix:open},~\ref{appendix:radial}, ~\ref{appendix:GNS} and~\ref{App:D} respectively.

\section{Properties of Dirac operators}\label{Sec:Properties}

\subsection{A self-adjoint realization}\label{sec:distinguished-extension}

We assume that $V\in\mathrm L^p(\R^d, \R^+)$ is positive valued and deal with the self-adjoint extensions of $\Dirac-V$. 
\begin{proposition} \label{prop:selfadjoint} Let $p\ge d\ge 1$ and $V\in\mathrm L^p(\R^d, \R^+)$. Then the operator $\Dirac-V$ is self-adjoint with domain:
\[
\textstyle\Dom(\Dirac-V):=\Big\{\psi\in\mathrm L^2(\R^d, \C^N):
\sqrt{V}\psi,\,(\Dirac-V)\,\psi\in\mathrm L^2(\R^d, \C^N) \Big\}\,.
\]
This is the unique self-adjoint realisation verifying: 
\[
\mathrm H^{1}(\R^d, \C^N)\subseteq\Dom(\Dirac-V)\subseteq \mathrm H^{\frac12}(\R^d, \C^N)\,.
\]
Moreover, we have the following properties.
\begin{enumerate}[label=\emph{(\roman*)}]
 \item\label{item:1prop:selfadjoint} If $p$ satisfies
\be{eq:cond_p_ess_sa}
\begin{cases}
p\ge 2&\quad \mbox{if}\quad d=1\,,\\
p > 2&\quad \mbox{if}\quad d=2\,,\\
p\ge d&\quad \mbox{if}\quad d\ge 3\,,
\end{cases}
\ee
then $\Dom(\Dirac-V)=\mathrm H^1(\R^d,\C^N)$.
\item\label{item:2prop:selfadjoint}
If $1 < p\le 2$ and $d=1$, then $\Dom(\Dirac-V)$ is also included in $\mathrm H^{\frac32-\frac1p}(\R,\C^2)$, hence in $\mathrm L^\infty(\R,\C^2)$.
\end{enumerate}
\end{proposition}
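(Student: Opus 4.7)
\medskip\noindent
\emph{Plan.} My overall approach is the Birman--Schwinger framework that the paper announces it will use. The first step is to establish relative form-boundedness of $\sqrt V$ with respect to $|\Dirac|=\sqrt{-\Delta+m^2}$: I would show that $\sqrt V\,|\Dirac|^{-1/2}$ extends to a bounded operator on $\mathrm L^2(\R^d,\C^N)$ with norm controlled by $\|V\|_p^{1/2}$. Writing $\sqrt V\in\mathrm L^{2p}$, Hölder's inequality yields
\[
\|\sqrt V\,f\|_2\le\|V\|_p^{1/2}\,\|f\|_{2p/(p-1)},
\]
and the embedding $\mathrm H^{1/2}(\R^d)\hookrightarrow\mathrm L^{2p/(p-1)}(\R^d)$ holds precisely in the regime $p\ge d$, which is the standing assumption. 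The endpoint $p=d=1$ is not covered by this direct argument, because $\mathrm H^{1/2}(\R)\not\hookrightarrow\mathrm L^\infty(\R)$, and I would treat it separately using the explicit ODE form of the one-dimensional Dirac resolvent.

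\medskip\noindent
With form-boundedness in hand, I would carry out the Nenciu--Wüst pseudo-Friedrichs construction, which is the standard substitute for the KLMN theorem when the unperturbed operator is not semibounded. Fix $\mu>0$ large enough so that the Birman--Schwinger operator
\[
K_\mu:=\sqrt V\,(\Dirac-\ri\,\mu)^{-1}\,\sqrt V
\]
satisfies $\|K_\mu\|_{\mathrm L^2\to\mathrm L^2}<1$, and define
\[
R_\mu:=(\Dirac-\ri\,\mu)^{-1}+(\Dirac-\ri\,\mu)^{-1}\,\sqrt V\,(\bbI-K_\mu)^{-1}\,\sqrt V\,(\Dirac-\ri\,\mu)^{-1}.
\]
I would check in turn that $R_\mu$ is bounded, injective with dense range, that $R_\mu^*=R_{-\mu}$, and that $(\Dirac-V-\ri\,\mu)\,R_\mu=\bbI$ with $\Dirac-V$ read distributionally. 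This identifies $R_\mu$ as the resolvent of a self-adjoint operator $A$, and unwinding $\operatorname{Ran}R_\mu$ shows $\psi\in\Dom(A)$ iff $\psi,\sqrt V\,\psi,(\Dirac-V)\psi\in\mathrm L^2$.

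\medskip\noindent
For the refined inclusions I would decompose $\psi=R_\mu\chi=u+w$ with $u=(\Dirac-\ri\,\mu)^{-1}\chi\in\mathrm H^1$ and $w=(\Dirac-\ri\,\mu)^{-1}\,\sqrt V\,\xi$, where $\xi=(\bbI-K_\mu)^{-1}\,\sqrt V\,u\in\mathrm L^2$. By Hölder, $\sqrt V\,\xi\in\mathrm L^r$ for some $r\in[1,2)$ determined by $p$; mapping properties of the Bessel-type operator $(\Dirac-\ri\,\mu)^{-1}$ then give $w\in\mathrm W^{1,r}(\R^d,\C^N)\hookrightarrow\mathrm H^{1/2}(\R^d,\C^N)$, so that $\Dom(A)\subseteq\mathrm H^{1/2}$. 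Under the stronger conditions~\eqref{eq:cond_p_ess_sa}, a further Hölder--Sobolev estimate upgrades multiplication by $V$ to a bounded map $\mathrm H^1\to\mathrm L^2$; combined with $(\Dirac-V)\psi\in\mathrm L^2$, the elliptic regularity $\Dirac^2=-\Delta+m^2$ then forces $\psi\in\mathrm H^1$, establishing~\ref{item:1prop:selfadjoint}. For~\ref{item:2prop:selfadjoint} I would bootstrap in $d=1$: any $\psi\in\mathrm H^{1/2}(\R,\C^2)$ lies in $\mathrm L^q(\R,\C^2)$ for every $q<\infty$, so $V\,\psi\in\mathrm L^p$, hence $\Dirac\,\psi\in\mathrm L^p$, and consequently $\psi\in\mathrm W^{1,p}(\R,\C^2)\hookrightarrow\mathrm H^{3/2-1/p}(\R,\C^2)\hookrightarrow\mathrm L^\infty(\R,\C^2)$ since $3/2-1/p>1/2$ when $p>1$.

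\medskip\noindent
The step I expect to be the main obstacle is the \emph{uniqueness} assertion among self-adjoint realisations with $\mathrm H^1\subseteq\Dom\subseteq\mathrm H^{1/2}$. Because $\Dirac-V$ is not semibounded, Friedrichs/KLMN uniqueness does not apply, so one must use a Nenciu-style argument: two such extensions agree on $\mathrm H^1$ by the formal action of $\Dirac-V$, their difference is a symmetric operator on $\mathrm H^{1/2}$ vanishing on $\mathrm H^1$, and the combination of the form-bound strictly below one with essential self-adjointness of $\Dirac$ on $C_c^\infty(\R^d,\C^N)$ forces the two extensions to coincide. Keeping careful track of the distributional meaning of $\Dirac-V$ when $V\,\psi$ is only in $\mathrm L^1_{\mathrm{loc}}$, and treating the endpoint case $p=d=1$ by 1D ODE methods, are where the main technical care will be required.
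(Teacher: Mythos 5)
Your overall architecture coincides with the paper's: boundedness and compactness of the Birman--Schwinger operator via H\"older and Sobolev estimates (Kato--Seiler--Simon in the paper), the Nenciu/Konno--Kuroda resolvent formula $R(z)=R_0(z)+R_0(z)\sqrt V\,(1-K_V(z))^{-1}\sqrt V\,R_0(z)$, the identification of the domain as $\Dom_{\rm max}(\Dirac-V)\cap\Dom(\sqrt V)$, and a bootstrap for~\ref{item:2prop:selfadjoint}; the endpoint $p=d=1$ is handled in the paper by the explicit Hilbert--Schmidt kernel of $K_V(z)$, consistent with your plan. Two steps, however, do not work as you state them. First, the existence of $\mu$ with $\|K_\mu\|<1$ does not follow from the form bound $\|\sqrt V\,|\Dirac|^{-1/2}\|\le C\,\|V\|_p^{1/2}$ alone: writing $K_\mu=\big(\sqrt V\,|\Dirac|^{-1/2}\big)\,|\Dirac|^{1/2}(\Dirac-\ri\,\mu)^{-1}|\Dirac|^{1/2}\,\big(|\Dirac|^{-1/2}\sqrt V\big)^{*\,*}$ gives $\|K_\mu\|\le\|\sqrt V\,|\Dirac|^{-1/2}\|^2\,\sup_{|\lambda|\ge m}|\lambda|\,(\lambda^2+\mu^2)^{-1/2}$, and the supremum equals $1$ for every $\mu$, so you obtain smallness only when $\|V\|_p$ itself is small. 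The fix, used twice in the paper, is the truncation $V=V\,\1_{\{V\ge R\}}+V\,\1_{\{V<R\}}$: the first piece has small $\mathrm L^p$ norm for $R$ large, while for the bounded piece $\|K_\mu\|\le\|V\|_\infty/|\mu|\to0$. (For $p>d$ the decay also follows directly from $\|g_{\ri s}\|_p\to0$ in the Schatten bound; the truncation is what saves the critical case $p=d\ge2$.)

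Second, your route to~\ref{item:1prop:selfadjoint} is circular as written: to invoke $\Dirac^2=-\Delta+m^2$ you need $V\psi\in\mathrm L^2$, and the boundedness of $V:\mathrm H^1\to\mathrm L^2$ delivers that only once $\psi\in\mathrm H^1$, which is the conclusion. Either run a genuine bootstrap starting from $\psi\in\mathrm H^{1/2}$, or do what the paper does: the truncation above gives a relative bound strictly below one, Kato--Rellich makes $\Dirac-V$ self-adjoint on $\mathrm H^1$, and since a self-adjoint operator admits no proper self-adjoint extension while $\mathrm H^1\subseteq\Dom(\Dirac-V)$, the two domains coincide. A smaller slip occurs in~\ref{item:2prop:selfadjoint}: $\psi\in\mathrm L^q$ for all $q<\infty$ yields $V\psi\in\mathrm L^r$ only for $r<p$, and $(\Dirac-V)\psi$ lies in $\mathrm L^2$, not $\mathrm L^p$, so the claim $\Dirac\psi\in\mathrm L^p$ is unjustified. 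The paper's bootstrap avoids both issues by working in negative Sobolev spaces: $V\psi=\sqrt V\,(\sqrt V\psi)\in\mathrm L^{2p/(p+1)}\hookrightarrow\mathrm H^{-1/(2p)}$ gives $\psi\in\mathrm H^{1-1/(2p)}\hookrightarrow\mathrm L^\infty$, whence $V\psi\in\mathrm L^p\hookrightarrow\mathrm H^{(p-2)/(2p)}$ and $\psi\in\mathrm H^{3/2-1/p}$. Your uniqueness sketch correctly identifies the delicate point; the clean formulation is that any self-adjoint realisation with domain in $\mathrm H^{1/2}$ is determined by the (form-bounded) quadratic form, which is Nenciu's theorem, cited rather than reproved in the paper.
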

\noindent We call the extension of Proposition~\ref{prop:selfadjoint} the \emph{distinguished} extension, as it is the unique one whose domain is included in the formal form domain $\Dom(|\Dirac|^{1/2})=\mathrm H^{1/2}(\R^d,\C^N)$. We will consider only this extension in what follows, so that the operator \hbox{$\Dirac-V$} is self-adjoint under the condition~$p\ge d\ge 1$. The proof of the first part of Proposition~\ref{prop:selfadjoint} follows from~\cite{Nenciu_1976}. For completeness, we provide a short proof using the associated Birman-Schwinger operator. Under Condition~\eqref{eq:cond_p_ess_sa}, the point~\ref{item:1prop:selfadjoint} comes from the usual Kato-Rellich theorem~\cite{Rellich_1937, Kato_1951}. The result in~\ref{item:2prop:selfadjoint} is derived by bootstrapping the Sobolev embedding theorem. See Section~\ref{sec:proof:selfadjoint} for the proof of Proposition~\ref{prop:selfadjoint}.
\begin{remark} For comparison, it is interesting to consider limit cases. The Coulomb potential $V(x)=1/|x|$ in dimension $d=3$ is in the weak Sobolev space $\mathrm L^3_{w}(\R^d)$. The operator $\Dirac-\kappa\,V$ is essentially self-adjoint if $0\le \kappa\le \sqrt{3}/2$, it has a distinguished extension if $\sqrt{3}/2<\kappa\le 1$, and no distinguished self-adjoint extension if $\kappa >1$: see~\cite{MR3865751} and references therein. Also see Remark~\ref{Rem:Self-Ajointness}.\end{remark}

\subsection{The Birman-Schwinger operator}\label{Sec:Birman-SchwingerOperator}

The Birman-Schwinger operator is a powerful tool for analysing the spectral properties of $\Dirac-V$ when $V$ belongs to a large class of perturbations.
In the relativistic case, Klaus in~\cite{Klaus80} used it extensively to characterize and study the first eigenvalue of Dirac operators when proving the existence of a distinguished self-adjoint extension. For non-Hermitian potentials $V$, it can be employed to locate the eigenvalues of $\Dirac-V$, as shown for example by Cuenin, Laptev and Tretter in~\cite{CueLapTre2014}, and by Fanelli and Krej\v{c}i\v{r}\'{\i}k in~\cite{FanKrej19}.
Furthermore, it can be applied to discuss properties of the ground state of $\Dirac-V$ when~$V$ is a generalised Coulomb-type potential, see, \emph{e.g.},~\cite{Klaus80, CPV20, ELS2021, esteban2021diraccoulombII}. 
Throughout this paper, following the approach by Kato~\cite{Kato_1983} and by Konno and Kuroda~\cite{Konno-Kuroda}, the Birman-Schwinger operator is used to define the self-adjoint extension of the operator $\Dirac-V$.
Then, with this rigorous definition at hand, we prove the existence of the optimization problem which defines the \emph{ground state} by applying variational methods directly on the Birman-Schwinger reformulation of the problem.

For $z \notin \sigma(\Dirac)$, let
\be{eq:def:Rz}
R_0(z) :=(\Dirac-z)^{-1}
\ee
denote the resolvent operator. Recall that we assume $V\ge 0$. We introduce the \emph{Birman-Schwinger operator}
\be{eq:def:KV}
K_V(z) :=\sqrt V\,R_0(z)\,\sqrt V=\sqrt V\,\dfrac1{\Dirac-z}\,\sqrt V\,.
\ee
\begin{lemma} \label{lem:KV}
For all $p\ge d\ge 1$, all $V\in\mathrm L^p(\R^d, \R^+)$ and all $z \notin \sigma(\Dirac)$, the operator $K_V(z)$ is compact (hence bounded). In addition,
\[
\lim_{s \to\pm \infty}\left\|K_V( \ri\,s ) \right\|_{\rm op}=0\,.
\]
\end{lemma}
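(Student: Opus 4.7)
The plan is to use the Clifford identity $\Dirac^2=-\Delta+m^2$ to factor the resolvent as
\[
R_0(z)=(\Dirac+z)(-\Delta+\mu^2)^{-1},\qquad \mu^2:=m^2-z^2,
\]
and then to apply the Kato--Seiler--Simon (KSS) trace-ideal inequality in its matrix-valued form: for $q\ge 2$ and a symbol $g$ with values in $N\times N$ matrices, $\|\sqrt V\,g(P)\|_{\mathfrak S_q}\le C_N\,(2\pi)^{-d/q}\,\|V\|_{q/2}^{1/2}\,\bigl(\int_{\R^d}\|g(\xi)\|_{\rm op}^q\,\rd\xi\bigr)^{1/q}$. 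For $z=is\in i\R$, the pointwise matrix identity $(A(\xi)+is)^*(A(\xi)+is)=(|\xi|^2+\mu^2)\,\bbI_N$, with $A(\xi):=\boldsymbol\alpha\cdot\xi+m\beta$, gives $\|R_0(is)(\xi)\|_{\rm op}=(|\xi|^2+\mu^2)^{-1/2}$ and shows that the Fourier multiplier $U_z$ with matrix symbol $(A(\xi)+z)(|\xi|^2+\mu^2)^{-1/2}$ is unitary on $L^2(\R^d;\C^N)$ (for $z\in i\R$).

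This suggests the symmetric Schatten factorisation
\[
K_V(z)=A\,U_z\,A^*,\qquad A:=\sqrt V\,(-\Delta+\mu^2)^{-1/4}.
\]
Applying KSS with $q=2p$, the operator $A$ lies in $\mathfrak S_{2p}$ provided the scalar symbol $(|\xi|^2+\mu^2)^{-1/4}$ lies in $L^{2p}(\R^d)$, equivalent to $\int_{\R^d}(|\xi|^2+\mu^2)^{-p/2}\,\rd\xi=C_d\,\mu^{d-p}<\infty$, i.e.\ $p>d$ strictly. Under this condition, $K_V(z)=AU_zA^*\in\mathfrak S_p(L^2)$ is compact; specialising to $z=is$ with $\|U_{is}\|_{\rm op}=1$, the chain of inequalities
\[
\|K_V(is)\|_{\rm op}\le\|K_V(is)\|_{\mathfrak S_p}\le\|A\|_{\mathfrak S_{2p}}^2\,\|U_{is}\|_{\rm op}\le C_d\,\|V\|_p\,\mu^{d/p-1}
\]
yields the decay $\|K_V(is)\|_{\rm op}\to 0$ as $|s|\to\infty$, since $d/p-1<0$.

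The main obstacle is the endpoint $p=d$, where $(|\xi|^2+\mu^2)^{-1/4}$ just fails to lie in $L^{2d}(\R^d)$ by a logarithmic divergence at infinity, so the symmetric factorisation above is unavailable. I would handle this case by approximation: truncate to $V_n:=V\,\chi_{\{V<n,\,|x|<n\}}\in L^d\cap L^{d+1}$ with $V_n\to V$ in $L^d$, so each $K_{V_n}(z)$ is compact by the Schatten argument applied at exponent $d+1>d$. Compactness of $K_V(z)$ and the decay then follow by an $\varepsilon$-argument combining a uniform operator-norm estimate $\|K_V(z)-K_{V_n}(z)\|_{\rm op}\le C_{d,z}\,\|V-V_n\|_{L^d}$ (deduced from the critical Sobolev form-bound $\|\sqrt V\,\phi\|_{L^2}^2\le C_d\,\|V\|_{L^d}\,\|\phi\|_{H^{1/2}}^2$, valid for $d\ge 2$ via the embedding $H^{1/2}(\R^d)\hookrightarrow L^{2d/(d-1)}(\R^d)$) with the Schatten decay for each $K_{V_n}(is)$. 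The one-dimensional endpoint $p=d=1$, where $H^{1/2}(\R)\not\hookrightarrow L^\infty$, requires a separate argument using the explicit one-dimensional kernel $|R_0(is)(x,y)|\lesssim\re^{-\mu|x-y|}$; this gives $\|K_V(is)\|_{\mathrm{HS}}^2\lesssim\iint V(x)V(y)\,\re^{-2\mu|x-y|}\,\rd x\,\rd y$, finite for $V\in L^1(\R)$ and vanishing as $\mu\to\infty$ by dominated convergence, which simultaneously delivers compactness (Hilbert--Schmidt) and the required decay.
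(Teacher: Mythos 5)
Your proof is correct and follows the same three-case architecture as the paper (strict subcriticality $p>d$ via Kato--Seiler--Simon, the critical case $p=d\ge2$ by a separate device, and $p=d=1$ via the explicit Hilbert--Schmidt kernel), but the devices differ in two places, both to your advantage in terms of transparency. For $p>d$, the paper splits the symbol $g_z$ into the $d+2$ pieces $g_z^j,g_z^m,g_z^z$ and factors each as $g_z^A\,g_z^B$ with components in $\mathrm L^q$, $q>2d$; your symmetric factorisation $K_V(z)=\sqrt V\,h_\mu(P)\,U_z\,h_\mu(P)\,\sqrt V$ with $h_\mu(\xi)=(|\xi|^2+\mu^2)^{-1/4}$ and the exactly unitary multiplier $U_{\ri s}$ (a correct consequence of $(A(\xi)-\ri s)(A(\xi)+\ri s)=(|\xi|^2+\mu^2)\,\bbI_N$) is cleaner and yields the explicit rate $\|K_V(\ri s)\|_{\rm op}\le C\,\|V\|_p\,\mu^{d/p-1}$, which the paper does not make quantitative. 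For $p=d\ge2$ the paper invokes weak Schatten classes $\fS_{2d,w}$ (Cwikel) for compactness and then truncates $\sqrt V$ for the decay; you instead reduce to the subcritical case by truncating $V$ and controlling the error in operator norm via the critical Sobolev form bound. This avoids weak trace ideals entirely and is arguably more elementary, at the price of not showing that $K_V(z)$ lies in any (weak) Schatten class at the endpoint.

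Two small points to tighten. First, since $K_V(z)-K_{V_n}(z)=\sqrt{V-V_n}\,R_0(z)\,\sqrt V+\sqrt{V_n}\,R_0(z)\,\sqrt{V-V_n}$, the correct error bound is $C\,\|V-V_n\|_d^{1/2}\,\|V\|_d^{1/2}$ rather than $C\,\|V-V_n\|_d$; this still tends to $0$. Second, for the decay at $p=d$ your $\varepsilon$-argument needs the constant in that error bound to be uniform in $s$ along $z=\ri s$; it is, because $\|\sqrt W\,(-\Delta+\mu^2)^{-1/4}\|_{\rm op}^2\le C_d\,\|W\|_d$ with $C_d$ independent of $\mu$ by the homogeneous embedding $\dot{\mathrm H}^{1/2}(\R^d)\hookrightarrow\mathrm L^{2d/(d-1)}(\R^d)$ and $|\xi|^{1/2}(|\xi|^2+\mu^2)^{-1/4}\le1$, but this should be said explicitly since you wrote the constant as $C_{d,z}$. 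Finally, for general $z\notin\sigma(\Dirac)$ (not purely imaginary) the operator $(-\Delta+\mu^2)^{-1/4}$ with complex $\mu^2$ requires a branch choice and $A^*$ is then not literally the adjoint; either replace $h_\mu$ by the fixed weight $(|\xi|^2+1)^{-1/4}$ and absorb the rest into a bounded multiplier, or obtain compactness at general $z$ from the case $z=\ri s$ via the resolvent identity. These are routine repairs, not gaps.
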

This result follows from the \emph{Kato-Seiler-Simon inequality}: see proof in Section~\ref{sec:proof:selfadjoint}. A consequence of Lemma~\ref{lem:KV} is the following result (also see Section~\ref{sec:proof:selfadjoint} for the proof).
\begin{proposition}\label{Prop:Birman-Schwinger}
Let $\Dirac-V$ be the distinguished self-adjoint extension defined as in Proposition~\ref{prop:selfadjoint}. Then
\[
\sigma_{\rm ess}(\Dirac-V)=\sigma_{\rm ess}(\Dirac)=(-\infty,-m] \cup [m,+\infty)\,.
\]
Moreover the \emph{Birman-Schwinger principle} holds: for all $\lambda\in (-\,m,m)$, $\lambda$ is an eigenvalue of $\Dirac-V$ if and only if $1$ is an eigenvalue of $K_V(\lambda)$.
\end{proposition}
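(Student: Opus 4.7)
The proposition consists of two statements, which I will treat together via the Konno--Kuroda resolvent formula
\be{eq:KK_formula}
R_V(z) = R_0(z) + R_0(z)\,\sqrt{V}\,\bigl(\Id - K_V(z)\bigr)^{-1}\sqrt{V}\,R_0(z),
\ee
taken as the definition of the resolvent of the distinguished extension constructed in Proposition~\ref{prop:selfadjoint}. This formula makes sense for $z\notin\sigma(\Dirac)$ provided $1\notin\sigma(K_V(z))$: by Lemma~\ref{lem:KV} the Neumann series converges for $|\mathrm{Im}\,z|$ large enough, and since $z\mapsto K_V(z)$ is an analytic compact-operator-valued function on $\C\setminus\sigma(\Dirac)$, the analytic Fredholm theorem ensures that the set of exceptional $z$ is discrete in $\C\setminus\sigma(\Dirac)$.

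For the essential spectrum, I would pick $z=\ri\,s$ with $|s|$ so large that $\|K_V(\ri\,s)\|_{\rm op}<1$ and read~\eqref{eq:KK_formula} as
\[
R_V(\ri\,s) - R_0(\ri\,s) = R_0(\ri\,s)\,\sqrt{V}\,\bigl(\Id - K_V(\ri\,s)\bigr)^{-1}\sqrt{V}\,R_0(\ri\,s).
\]
The Kato-Seiler-Simon argument underlying Lemma~\ref{lem:KV} places $\sqrt{V}\,R_0(\ri\,s)$ and its adjoint $R_0(\ri\,s)\,\sqrt{V}$ in a Schatten class, so the right-hand side is a compact operator. Weyl's theorem on compact perturbations of the resolvent then yields $\sigma_{\rm ess}(\Dirac-V)=\sigma_{\rm ess}(\Dirac)=(-\infty,-m]\cup[m,+\infty)$.

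For the Birman--Schwinger principle, fix $\lambda\in(-\,m,m)$. If $(\Dirac-V)\,\psi=\lambda\,\psi$ for some $\psi\in\Dom(\Dirac-V)\setminus\{0\}$, then $\phi:=\sqrt{V}\,\psi\in\mathrm L^2(\R^d,\C^N)$ by the domain characterisation of Proposition~\ref{prop:selfadjoint}, and the rewriting $(\Dirac-\lambda)\,\psi=\sqrt{V}\,\phi$ gives $\psi=R_0(\lambda)\,\sqrt{V}\,\phi$; multiplying by $\sqrt{V}$ yields $\phi=K_V(\lambda)\,\phi$, with $\phi\neq 0$ since $\phi=0$ would force $\psi=0$. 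Conversely, for $\phi\in\mathrm L^2(\R^d,\C^N)\setminus\{0\}$ satisfying $K_V(\lambda)\,\phi=\phi$, I would set $\psi:=R_0(\lambda)\,\sqrt{V}\,\phi$, well defined because $R_0(\lambda)\,\sqrt{V}$ extends by duality from $\sqrt{V}\,R_0(\lambda)$ to a bounded operator on $\mathrm L^2$. Then $\sqrt{V}\,\psi=K_V(\lambda)\,\phi=\phi\in\mathrm L^2$ and
\[
(\Dirac-V-\lambda)\,\psi = \sqrt{V}\,\phi - V\,\psi = \sqrt{V}\,(\phi-\sqrt{V}\,\psi)=0,
\]
so $\psi$ lies in $\Dom(\Dirac-V)$ and is an eigenfunction at the eigenvalue $\lambda$.

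The principal technical obstacle is the rigorous interpretation of $\sqrt{V}\,R_0(z)$ as a bounded operator on all of $\mathrm L^2(\R^d,\C^N)$: \emph{a priori} $\sqrt{V}$ is a multiplication operator whose natural domain $\{u:\sqrt{V}\,u\in\mathrm L^2\}$ may be a proper dense subspace. The Schatten-class bound provided by the Kato-Seiler-Simon estimate behind Lemma~\ref{lem:KV} extends $\sqrt{V}\,R_0(z)$ by continuity to all of $\mathrm L^2$, and the identification of the pseudoresolvent defined by~\eqref{eq:KK_formula} with the distinguished self-adjoint realisation of Proposition~\ref{prop:selfadjoint} then follows from the uniqueness statement in that proposition.
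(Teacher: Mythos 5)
Your proposal is correct and follows essentially the same route as the paper: the Konno--Kuroda resolvent formula from the construction of the distinguished extension, compactness of $R_V(z)-R_0(z)$ via the Schatten-class bounds behind Lemma~\ref{lem:KV}, Weyl's theorem for the essential spectrum, and the Birman--Schwinger equivalence read off from that same formula. The only difference is one of detail: the paper dismisses the Birman--Schwinger principle with ``by construction'' and a reference, whereas you spell out the two implications $\psi\mapsto\sqrt V\,\psi$ and $\phi\mapsto R_0(\lambda)\sqrt V\,\phi$ explicitly, including the needed extension of $R_0(\lambda)\sqrt V$ to a bounded operator --- which is a welcome addition rather than a deviation.
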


Let us point out some differences with Birman-Schwinger operators associated with Schr\"odinger operators (see Figure~\ref{fig:spectrum_BS}). 
In the Schrödinger case, the Birman-Schwinger operator is of the form
\[
\widetilde{K_V}(\lambda)=\sqrt V\,\dfrac1{-\Delta-\lambda}\,\sqrt V\,.
\]
\begin{figure}[ht]
\begin{subfigure}{0.45\textwidth}
\includegraphics[width=1\textwidth]{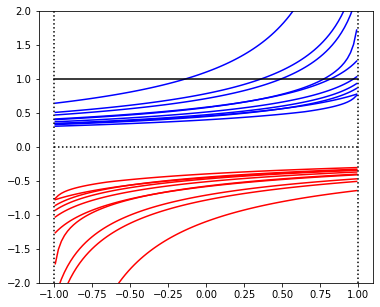}
\end{subfigure}
\begin{subfigure}{0.45\textwidth}
\includegraphics[width=1\textwidth]{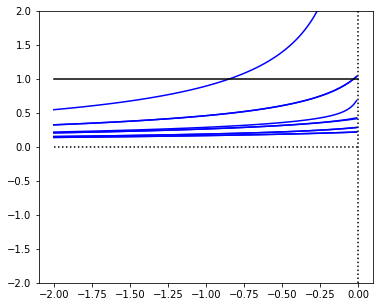}
\end{subfigure}
\caption{\textit{\textbf{Birman-Schwinger principle}}. (Left). The spectrum of $\lambda \mapsto K_V(\lambda)$ (Dirac case) for $\lambda \in (-1, 1)$, and $V(x) = 2\exp(-| x |^2/4)$ in dimension $d = 2$. We only plotted the 10 largest (blue) and 10 lowest (red) eigenvalues. An energy $\lambda$ is an eigenvalue of $\Dirac - V$ if one eigenvalue of $K_V(\lambda)$ crosses the black line~$1$. (Right) Same for $\lambda \mapsto \widetilde{K_V}(\lambda)$ (Schrödinger case) with $\lambda \in (-2, 0)$.}
\label{fig:spectrum_BS}
\end{figure}

For any $\lambda < 0$, the operator $\widetilde{K_V}(\lambda)$ is a positive compact operator and the map $\lambda \mapsto \widetilde{K_V}(\lambda)$ is operator increasing on $\R^-$. In particular, if $\widetilde{\mu_1}(\lambda) > \widetilde{\mu_2}(\lambda)\ge \cdots\ge 0$ denote the eigenvalues of $\widetilde{K_V}(\lambda)$, ranked in decreasing order and counted with multiplicities, all functions $\lambda \mapsto \widetilde{\mu_j}(\lambda)$ are increasing on $\R^-$. In addition, the first eigenvalue $\widetilde{\mu_1}$ is simple because the kernel $\widetilde{K_V}(x,y)$ is pointwise positive, together with Krein-Rutman theorem: see~\cite[Theorem 6.13]{Brezis_2011} for a statement and also~\cite[Section~XIII.12]{reedsimon4}.

In the Dirac case, the operator $K_V(\lambda)$ with $\lambda\in\R$ is defined only in the gap $(-\,m, m)$ of the essential spectrum. It is compact by Lemma~\ref{lem:KV} and symmetric because $\lambda$ is real, but it is \emph{not} a positive operator. Its eigenvalues are real valued, and can be ranked as $\mu_1(\lambda)\ge \mu_2(\lambda)\ge \cdots\ge 0$ for the positive eigenvalues, and $\nu_1(\lambda)\le \nu_2(\lambda)\le \cdots\le 0$ for the negative ones. As the map $\lambda \mapsto (\Dirac-\lambda)^{-1}$ is operator increasing on $(-\,m, m)\ni\lambda$, all maps $\lambda \mapsto \mu_j(\lambda)$ and $\lambda \mapsto \nu_j(\lambda)$ are increasing. This explains in particular why we expect eigenvalues to emerge from the upper essential spectrum in this setting. We do not know whether $\mu_1(\lambda)$ is always a simple eigenvalue or not (see Appendix~\ref{appendix:open} for more details on open questions).

\medskip For $\lambda\in (-\,m, m)$, $p\ge d\ge 1$, and $V\in\mathrm L^p(\R^d, \R^+)$, let $\mu_1\big(K_V(\lambda)\big)$ denote the largest (positive) eigenvalue of $K_V(\lambda)$. We rephrase the optimization problem~\eqref{eq:def:alpha0} as
\be{eq:def:alpha}
\alpha_D(\lambda,p):=\inf\Big\{\nrm Vp\,:\,V\in\mathrm L^p(\R^d,\R^+)\;\mbox{and}\;\mu_1\big(K_V(\lambda)\big)=1\Big\}\,.
\ee

\subsection{Proofs of \texorpdfstring{Proposition~\ref{prop:selfadjoint}, Lemma~\ref{lem:KV} and Proposition~\ref{Prop:Birman-Schwinger}}{Proposition 2.1, Lemma 2.3 and Propositions 2.4}}\label{sec:proof:selfadjoint}

We start by establishing that $K_V$ defined by~\eqref{eq:def:KV} is a compact operator (Lemma~\ref{lem:KV}) before proving Propositions~\ref{prop:selfadjoint} and~\ref{Prop:Birman-Schwinger}.

\begin{proof}[Proof of Lemma~\ref{lem:KV}]
Assume first that $p > d$. We claim that, for $z \notin \sigma(\Dirac)$, the operator $K_V(z)$ is compact. We have $R_0(z)=g_z(-\ri\,\nabla )$ with
\[
g_z(k) :=\dfrac1{|k|^2+m^2-z^2} \(\sum_{j=1}^d \alpha_j\,k_j+m\,\beta+z\,\bbI_N\)=: \sum_{j=1}^d g_z^j(k)+g_z^m(k)+g_z^z(k)\,,
\]
with obvious notation. Let us focus on the $g_z^1(k)$ term. We write $g_z^1(k)=g_z^A(k)\,g_z^B(k)$ with
\[
g_z^A(k) :=\dfrac{\sqrt{| k_1 |}\,{\rm sgn}(k_1)}{\sqrt{|k|^2+m^2-z^2}}\,\alpha_1\quad \mbox{and}\quad
g_z^B(k) :=\dfrac{\sqrt{| k_1 |}}{\sqrt{|k|^2+m^2-z^2}}\,\mathbb{I}_d\,.
\]
All components of the functions $g_z^A$ and $g_z^B$ are in $\mathrm L^q(\R^d)$ for all $q > 2\,d$. Since $\sqrt V$ is in $\mathrm L^{2p}(\R^d)$ with $2\,p > 2\,d$, we can use the Kato-Seiler-Simon inequality~\cite[Chapter 4, Theorem~4.1]{SimonTraceIdeals} and conclude that the operator
\[
K_V^1(z) :=\sqrt V\,g_z^1(-\ri\,\nabla )\,\sqrt V
\]
is in the Schatten class $\fS_p\big(\mathrm L^2(\R^d,\C^N)\big)$ with
\[
\|K_V^1(z)\|_{\fS_p}\le\left\|\sqrt{V(x)}\,g_z^A(-\ri\,\nabla) \right\|_{\fS_{2p}}\,\left\|g_z^B(-\ri\,\nabla)\,\sqrt{V(x)}\,\right\|_{\fS_{2p}}\le C\,\|V\|_p\,\|g_z^1\|_p\,.
\]
In addition, we have $\|g_{z=is}^1\|_p \to 0$ as $s \to\pm\infty$. Similar computation for the other terms shows that $K_V$ is in the Schatten class $\fS_p$, and that $\lim_{s \to\pm \infty}\|K_V(\ri\,s)\|_{\rm op}=0$.

\medskip In the case $p=d$ with $d\ge 2$, we use that all components of the functions $g_z^A$ and $g_z^B$ are in the weak-Sobolev space $\mathrm L^{2\,d}_w(\R^d)$. According to~\cite[Chapter 4]{SimonTraceIdeals}, $g_z^A(-\ri\,\nabla)\,\sqrt V$ and $\sqrt V\,g_z^B(-\ri\,\nabla)$ are in the weak Schatten class $\fS_{2d, w}$. In particular, they are compact operators. This already proves that $K_V$ is compact as well. Note that $\|g_{z=\ri\,s}^1\|_{d,w}$ does not converge to $0$ as $s \to\pm\infty$.

For any $R > 0$, $\sqrt{V_R} :=\min\big(\sqrt V, R\big)$ belongs to $\mathrm L^d(\R^d) \cap \mathrm L^\infty(\R^d)$. We have
\[
\left\|g_z^A(-\ri\,\nabla)\,\sqrt V \right\|_{\rm op}\le\left\|g_z^A(-\ri\,\nabla)\,\sqrt{V_R} \right\|_{\rm op}+\left\|g_z^A(-\ri\,\nabla)\,\big(\sqrt V-\sqrt{V_R} \big) \right\|_{\rm op}\,.
\]
For $R$ large enough, the second term is small in the Schatten space $\fS_{2d, w}$, and for $z=\ri\,s$ with $|s|$ large, the first term is small in $\fS_q$ with $q > p$. Hence \hbox{$\lim_{s \to\pm\infty}\|K_V(\ri\,s)\|_{\rm op}=0$}.

\medskip Let us finally assume $p=d=1$. In this case, with explicit computations, the kernel of the Birman-Schwinger operator $K_V(z)$ is given by 
\[
\sqrt{V(x)}\cdot
\frac{1}{2}
\begin{pmatrix}
\frac{z+m}{k}&\operatorname{sign}(x-y)\\
\operatorname{sign}(x-y) &\frac{z-m}{k}
\end{pmatrix}\re^{-k\,|x-y|}\cdot
\sqrt{V(y)}
\in \mathrm L^2(\mathbb{R}\times\mathbb\R,\mathbb{C}^2)
\]
where $k=\sqrt{m^2-z^2}$ is chosen with a positive real part. Thus, $K_V$ is a Hilbert-Schmidt operator (hence it is compact) and by the dominated convergence theorem we can conclude that
\hbox{$\lim_{s \to\pm\infty}\|K_V(\ri\,s)\|_{\rm op}=0$}.
\end{proof}

\begin{proof}[Proof of Proposition~\ref{prop:selfadjoint}] We divide the proof in several steps.

\medskip\noindent\emph{\textbf{Distinguished self-adjoint extension.}}
We define the domain of self-adjointness for the operator $\Dirac-V$ as a perturbation of $\Dirac$ by applying the method of G.~Nenciu in~\cite{Nenciu_1976}.
Using similar techniques as in the proof of Lemma~\ref{lem:KV}, one can show that the operators $R_0(z)\,\sqrt V$ and $\sqrt V\,R_0(z)$ can be extended into bounded linear operators on $\mathrm L^2(\R^d, \C^N)$. These operators are compact operators, in the Schatten class $\fS_{2p}$. We are now in the setting of~\cite{Nenciu_1976}. Let $\Omega :=\{z\in\C\,:\,1 \notin \sigma\big(K_V(z)\big) \}$ where $K_V$ is defined by~\eqref{eq:def:KV}. The set
~$\Omega$ is non-empty by Lemma~\ref{lem:KV}. For $z\in\Omega$, define
\[
R(z) :=R_0(z)+R_0(z)\,\sqrt V\,\big(1-K_V(z)\big)^{-1}\,\sqrt V\,R_0(z)\,.
\]
According to~\cite{Nenciu_1976}, the operator $\Dirac-V$ has a unique self-adjoint extension whose resol\-vent is the operator $R(z)$ defined in~\eqref{eq:def:Rz}. Its domain is $\Dom(\Dirac-V) :={\rm Ran}\,R(z)$, which is independent of $z\in\Omega$. This is the unique extension which is included in the formal form domain $\Dom(|\Dirac|^{1/2})=\mathrm H^{1/2}(\R^d, \C^N)$.

\medskip\noindent\emph{\textbf{Domain of the distinguished extension.}}
Define the \emph{maximal domain} as
\[
\Dom_{\rm max}(\Dirac-V):=\big\{\psi\in\mathrm L^2(\R^d, \C^N)\,:\,(\Dirac -V)\,\psi \in\mathrm L^2(\R^d, \C^N)\big\}.
\]
Then, the set
\[
\left\{\psi\in\mathrm L^2(\R^d, \C^N)\,:\, \sqrt V\,\psi,\;(\Dirac-V)\psi\in\mathrm L^2(\R^d, \C^N) \right\}
\]
is also $\Dom_{\rm max}(\Dirac-V) \cap \Dom\big(\sqrt V\big)$. We write $\psi\in\Dom (\Dirac-V)$ as $R(z)\,f$ for some $f\in\mathrm L^2(\R^d,\C^N)$.
Then
\[
\sqrt V\,\psi=\sqrt V\,R_0(z)\,f+K_V\,\big(1-K_V(z)\big)^{-1} \sqrt V\,R_0(z)\,f\in\mathrm L^2(\R^d,\C^N)\,.
\]
This proves that $\Dom(\Dirac-V) \subset \Dom_{\rm max}(\Dirac-V) \cap \Dom \big(\sqrt V\big)$. For the opposite inclusion, consider $\psi\in\Dom_{\rm max}(\Dirac-V) \cap \Dom\big(\sqrt V\big)$. We set $ f:=(\Dirac+V-z)\,\psi\in\mathrm L^2(\R^d, \C^N)$ and $\psi_0=R(z)\,f\in\Dom(\Dirac-V)$. Note that $\sqrt V\,\psi_0\in\mathrm L^2(\R^d, \C^N)$, since
\[
\sqrt V\,R(z)=\sqrt V\,R_0(z)+K_V(z)\,\big( 1-K_V(z)\big)^{-1}\,\sqrt V\,R_0(z)
\]
is a bounded operator on $\mathrm L^2(\R^d, \C^N)$. So $\phi:=\psi-\psi_0$ is such that
\[
\big(\Dirac-V-z\big)\,\phi=0\quad\mbox{and}\quad \phi\in\Dom\big(\sqrt V\big)\,.
\]
{}From the relation
\[
\big(1-R_0(z)\big)^{-1}\,\sqrt V\,R_0(z)\,\big(\Dirac-V-z\big)=\sqrt V\,,
\]
we obtain $\sqrt V\,\phi=0$, and from the relation
\[
R_0(z)\,\big(\Dirac-V-z\big)=1-R_0(z)\,V=1-\big( R_0(z)\,\sqrt V\,\big)\,\sqrt V\,,
\]
we finally get $\phi=0$, hence $\psi=\psi_0\in\Dom(\Dirac-V)$.

Finally, if $p\ge d\ge 1$ by the H\"older inequality and the Sobolev embedding theorem we get that $\mathrm H^1(\R^d,\C^N) \subseteq \Dom(\Dirac-V)$ and this concludes the first part of the proof.

\medskip\noindent\emph{\textbf{Self-adjointness on $\mathrm H^1(\R^d,\C^N)$.}}
Let us prove~\ref{item:1prop:selfadjoint}.
Assume that $p$ satisfies~\eqref{eq:cond_p_ess_sa}. Thanks to~\eqref{eq:def:alphai_beta} we have
\[
\forall\,\psi\in\mathrm H^2(\R^d,\C^N)\,,\quad\left\|\Dirac \psi\right\|_2^2=\|\nabla \psi\|_2^2+m^2\,\|\psi\|_2^2\,.
\]
This shows that the graph norm of $\Dirac$ is equivalent to the usual $\mathrm H^1(\R^d,\C^N)$ norm. Set
\[
q :=\begin{cases}
\frac{2\,p}{p-2}&\quad \mbox{if}\quad p > 2\\
+\infty&\quad \mbox{if $d=1$ and $p=2$}
\end{cases}\quad \mbox{so that}\quad \frac1p+\frac1q=\frac12\,.
\]
We write $V=V_1+V_2$ with $V_1 :=V\,\1_{V\ge R}$ and $V_2 :=V\,\1_{V\le R}$. We have
\begin{align*}
\|V\,\psi\|_2&\le\|V_1\,\psi\|_2+\|V_2\,\psi\|_2\le\|V_1\|_p\,\|\psi\|_q+\|V_2\|_\infty\,\|\psi\|_2\\
&\le C_S\,\|V_1\|_p\,\|\psi\|_{\mathrm H^1}+\|V_2\|_\infty\,\|\psi\|_2
\end{align*}
where, in the last inequality, we used Sobolev's embedding $\mathrm H^1 (\R^d) \hookrightarrow \mathrm L^q(\R^d)$ and, according to~\eqref{eq:cond_p_ess_sa}, $q$ satisfies
\[
\begin{cases}
2\le q\le+\infty&\quad \mbox{if}\quad d=1\,,\\
2\le q <+\infty&\quad \mbox{if}\quad d=2\,,\\
2\le q\le \frac{2\,d}{d-2}&\quad \mbox{if}\quad d\ge 3\,.
\end{cases}
\]
We choose $R$ large enough so that $C_S\,\|V_1\|_p < 1$ and conclude with the \emph{Kato-Rellich theorem} (see~\cite[Theorem~X.12]{MR0493420}) that $\Dirac-V$ is self-adjoint with domain $\mathrm H^1(\R^d,\C^N)$.
Since any self-adjoint operator only admits trivial self-adjoint extensions, we can conclude that $\mathrm H^1(\R^d,\C^N)= \Dom(\Dirac-V)$.

\medskip\noindent\emph{\textbf{Regularity for $d=1$.}}
Let us focus on~\ref{item:2prop:selfadjoint} and assume that $d=1$ and $1 < p\le 2$. Let us prove that $\Dom(\Dirac-V)$ is also included in $\mathrm H^{\frac32-\frac1p}(\R,\C^2)$. For any $\psi\in\Dom(\Dirac-V)$, we have
\[
(\Dirac-V)\,\psi=: f\,\in\mathrm L^2(\R,\C^2)\,,
\]
hence $\Dirac\,\psi=f+V \psi$. We recall the following negative Sobolev embeddings: for all $1 < r\le 2$, we have $\mathrm L^r(\R) \hookrightarrow \mathrm H^{-s}(\R)$ for all $s\ge \frac{2-r}{2\,r}$ and $\mathrm L^2(\R) \hookrightarrow \mathrm H^{-s}(\R)$ for all $s\ge 0$, while
\[
\forall\,s\ge \tfrac1{2\,p}\,,\quad V\,\psi=\sqrt V\,\underbrace{\big(\sqrt V\,\psi\big)}_{\in\mathrm L^2(\R,\C^2)}\in\mathrm L^{\frac{2\,p}{p+1}}(\R,\C^2) \hookrightarrow \mathrm H^{-s}(\R,\C^2)\,.
\]
We deduce that $\Dirac \psi\in\mathrm H^{-\frac1{2\,p}}(\R,\C^2)$, hence that $\psi\in\mathrm H^{1-\frac1{2\,p}}(\R,\C^2)$. We now bootstrap the argument. For $ p > 1$, we have $1-\frac1{2\,p} > \frac12$, so $\psi\in\mathrm L^\infty(\R,\C^2)$ by Sobolev's embedding. This gives $V\,\psi\in\mathrm L^p(\R,\C^2) \hookrightarrow \mathrm H^{\frac{p-2}{2\,p}}(\R,\C^2)$. So $\Dirac \psi=f-V\,\psi\in\mathrm H^{\frac{p-2}{2\,p}}(\R,\C^2)$ as well, and we obtain $\psi\in\mathrm H^{1+\frac{p-2}{2\,p}}(\R,\C^2)$ with $1+\frac{p-2}{2\,p}=\frac32-\frac1p$, as wanted.
\end{proof}

\begin{proof}[Proof of Proposition~\ref{Prop:Birman-Schwinger}] Since $R_0(z)\,\sqrt V$ is compact, then $R(z)$ is a compact perturbation of the free resolvent $R_0(z)$. 
The result on $\sigma_{\rm ess}(\Dirac-V)$ follows from~\cite[Theorem~4.5]{MR1219537} (also see~\cite[Theorem~XIII.14 and Corollary~1]{reedsimon4}). 
Such a result is known in the literature as \emph{Weyl's theorem}. 

Moreover, by construction, the Birman-Schwinger principle holds for the distinguished self-adjoint extension defined as in Proposition~\ref{prop:selfadjoint}: ${\lambda\in (-\,m,m)}$ is an eigenvalue of \hbox{$\Dirac-V$} if and only if $1$ is an eigenvalue of $K_V(\lambda)$.
See~\cite[Theorem~1.3]{MR985401} for a similar application of the Birman-Schwinger principle in a non-relativistic setting.\end{proof}

\begin{remark}\label{Rem:Self-Ajointness} The self-adjointness of Dirac operators involving potentials with one Coulomb singularity or several Coulomb singularities has been intensively studied in respectively~\cite{MR0355655,MR0326448,MR0365233,Nenciu_1976,MR0437948,Kato_1983,MR3088216} (with additional references therein) and~\cite{MR0462346,Klaus80}. In the alternative strategy of~\cite{MR2370231,MR2466677} based on~\cite{MR1761368}, a distinguished self-adjoint extension is built using the underlying Hardy inequality, which was related with the other constructions for Dirac-Coulomb operators in~\cite{MR3960263,ELS2021}. Also see~\cite{SchSolTok20,dolbeault2023distinguished} for further considerations on min-max principles, Hardy inequalities and self-adjointness issues. Optimal Hardy inequalities have been repeatedly use to establish optimal conditions for the existence of a ground state. For instance, in presence of a magnetic field as in~\cite{1751-8121-41-18-185303,exner2021partial,MR2333781}, a critical magnetic field is obtained as the ground state energy approaches $-\,m\,c^2$, which determines the optimal constant of the corresponding Hardy inequality. In the approach of~\cite{ELS2021,esteban2021diraccoulombII} as well as in our paper, the Birman-Schwinger formula is essential as it was in~\cite{Klaus80,MR0552332,Nenciu_1976}. Notice that we do not rely on Nenciu’s method~\cite[Corollary~2.1]{Nenciu_1976}, but instead use the method of Konno and Kuroda~\cite{Konno-Kuroda} and Kato's approach~\cite{Kato_1983}.\end{remark}

\section{The variational problem}\label{sec:proof:alpha}

In this section, we consider the minimization problem~\eqref{eq:def:alpha} and prove Theorem~\ref{Thm:Main1} in a reformulation which relies on the Birman-Schwinger operator associated to $\Dirac-V$, as introduced in Section~\ref{Sec:Birman-SchwingerOperator}. The proof of Theorem~\ref{Thm:Main1} is given below, right after the statement of Corollary~\ref{Cor:Main1}, as a simple consequence of previous results in the Birman-Schwinger framework.

\subsection{An auxiliary maximization problem}
First, we notice that, for all $t > 0$, we have $K_{tV}(\lambda)=t\,K_V(\lambda)$, hence $\mu_1\big( K_{tV}(\lambda) \big)=t\,\mu_1 \big( K_V(\lambda) \big)$. So, introducing the auxiliary problem
\be{eq:def:N}
\cN(\lambda,p) :=\sup \Big\{\mu_1 \big(K_W(\lambda)\big)\,:\,W\in\mathrm L^p(\R^d, \R^+)\,,\;\|W\|_p=1 \Big\}\,,
\ee
we deduce that
\be{Eq:alphaN}
\alpha_D(\lambda,p)=\frac1{\cN(\lambda,p)}\,.
\ee
If $W$ is a maximizer for $\cN(\lambda,p)$, then $V=W / \cN(\lambda,p)$ is a minimizer for $\alpha_D(\lambda,p)$.
In what follows, we study the maximization problem~\eqref{eq:def:N}. We perform several changes of variables to study this problem. First, the min-max principle shows that $\cN(\lambda,p)$ equals
\[
\cN(\lambda,p)=\sup_{\begin{array}{c}W\in\mathrm L^p(\R^d, \R^+)\\ \|W\|_p=1\end{array}} \sup_{\begin{array}{c}\phi\in\mathrm L^2(\R^d,\C^N)\\ \|\phi\|_2=1\end{array}}\left\langle \phi, \sqrt W\,R_0(\lambda)\,\sqrt W \phi \right\rangle\,.
\]
We make the change of variable
\[
w :=\sqrt W \phi
\]
so that, by H\"older's inequality, $w\in\mathrm L^{\frac{2\,p}{p+1}}(\R^d,\C^N)$, and, with the convention that $\nrm{w}{r}= \nrm{|w|_{\C^N}}{r} $,
\[
\left\|w \right\|_\frac{2\,p}{p+1}\le\|W\|_p^\frac12\,\|\phi\|_2=1\,.
\]
In addition, there is equality if and only if $\mathrm W^p$ is proportional to $| \phi |^2$, both proportional to $|w|^{\frac{2\,p}{p+1}}$. With
\[\label{Eq:pq}
q :=\frac{2\,p}{p+1}\in (1,2)\,,
\]
this shows that $\cN(\lambda,p)$ is also solution to the optimization problem
\be{eq:Nbis}
\cN(\lambda,p)=\sup \Big\{\big\langle w, R_0(\lambda)\,w \big\rangle\,:\,w\in\mathrm L^q(\R^d,\C^N)\,,\;\|w\|_q=1 \Big\}\,.
\ee
In addition, if $w\in\mathrm L^q(\R^d, \R^+)$ is an optimizer of~\eqref{eq:Nbis}, then the corresponding optimal~$W$ and $\phi$ are given by
\[
W=|w|^\frac qp=|w|^{\frac2{p+1}}\quad \mbox{and}\quad \phi=|w|^{\frac q2-1}\,w=|w|^{-\frac1{p+1}}\,w\,.
\]
Thus, by showing the existence of an optimizer for~\eqref{eq:Nbis}, we solve problem~\eqref{eq:def:N}, and by definition of the Birman-Schwinger operator, find an optimal potential and eigenfunction for our original problem~\eqref{eq:def:alpha0}.

Since $\alpha \mapsto \Lambda_D(\alpha, p)$ is the inverse map of $\lambda \mapsto \alpha_D(\lambda, p)$ according to~\eqref{Eq:alphaN}, and since $\alpha_D(\lambda, p) = 1/\cN(\lambda, p)$, it is enough to focus on the properties of $\cN(\cdot, p)$. 
\begin{theorem} \label{th:w} Let us consider $\cN$ defined by~\eqref{eq:Nbis}.
For all $\lambda\in (-\,m, m)$ and all $p > d$, we have $\cN(\lambda,p) > 0$. All maximizing sequences for~\eqref{eq:Nbis} are precompact up to translations, hence~\eqref{eq:Nbis} has maximizers. If $w$ is such an optimizer, then $w$ satisfies the Euler-Lagrange equation
\be{eq:EL_for_w}
R_0(\lambda)\,w=\tau\,|w|^{-\frac2{p+1}} w\quad \mbox{with}\quad \tau=\cN(\lambda,p)\,.
\ee
Finally, the map $\lambda \mapsto \cN(\lambda,p)$ is continuous, strictly increasing, and satisfies 
\[
 \lim_{\lambda \to -m}\cN(\lambda, p) =: \cN_c(p) > 0 \quad \text{and} \quad \lim_{\lambda \to +m}\cN(\lambda, p) = \infty.
\]
\end{theorem}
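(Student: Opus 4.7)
The plan is to address the five components of the statement—positivity, precompactness, the Euler-Lagrange equation, monotonicity/continuity, and the boundary limits—in sequence, leveraging the factorization
\[
R_0(\lambda) = (\Dirac+\lambda)\big(-\Delta+m^2-\lambda^2\big)^{-1}
\]
and the spectral decomposition \(\Dirac=|\Dirac|(P_+-P_-)\), where \(P_\pm\) project onto the positive and negative spectral subspaces. On \(P_+L^2\), \(R_0(\lambda)\) is strictly positive with operator norm \((m-\lambda)^{-1}\); on \(P_-L^2\) it is negative with norm \((m+\lambda)^{-1}\). Positivity \(\cN(\lambda,p)>0\) is then immediate by testing against any Schwartz \(w\in P_+L^2\).

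\textbf{Precompactness of maximizing sequences.} This is the principal obstacle. Given a maximizing sequence \((w_n)\subset L^q(\R^d,\C^N)\) with \(\|w_n\|_q=1\) and \(q=\tfrac{2p}{p+1}\in(1,2)\), I would apply Lions' concentration-compactness to the probability measures \(\rd\mu_n=|w_n|^q\,\rd x\). Under \(p>d\), the integral kernel of \(R_0(\lambda)\) combines a local \(|x|^{1-d}\) singularity with exponential decay at rate \(\sqrt{m^2-\lambda^2}\), hence lies in \(L^{p/(p-1)}(\R^d)\); Young's inequality then makes \(R_0(\lambda):L^q\to L^{q'}\) bounded. \emph{Vanishing} is excluded because an integrable-kernel convolution against an \(L^q\)-bounded sequence with vanishing local mass produces vanishing bilinear forms, contradicting \(\cN>0\). \emph{Dichotomy} is excluded by strict subadditivity: if \(w_n=w_n^{(1)}+w_n^{(2)}\) with supports separating to infinity and limit \(L^q\)-masses \(\alpha_1,\alpha_2\in(0,1)\), \(\alpha_1+\alpha_2=1\), the exponential decay of the kernel kills the cross-term and applying the definition of \(\cN\) to each renormalized piece yields
\[
\cN \le \big(\alpha_1^{2/q}+\alpha_2^{2/q}\big)\,\cN,
\]
which is impossible because \(q<2\) forces \(\alpha_1^{2/q}+\alpha_2^{2/q}<1\) whenever both \(\alpha_i\in(0,1)\). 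Only concentration up to translations survives, yielding (after a shift and subsequence extraction) strong \(L^q\)-convergence to a nontrivial maximizer \(w\) with \(\|w\|_q=1\).

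\textbf{Euler-Lagrange equation.} A Lagrange multiplier argument applied to~\eqref{eq:Nbis}, using \(\delta\|w\|_q^q=q\,|w|^{q-2}w\) and \(\delta\langle w,R_0(\lambda)w\rangle=2\,R_0(\lambda)w\), gives \(R_0(\lambda)w=\tau\,|w|^{q-2}w\) with \(q-2=-2/(p+1)\); pairing against \(w\) identifies \(\tau=\cN(\lambda,p)\), which is~\eqref{eq:EL_for_w}.

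\textbf{Monotonicity, continuity, boundary limits.} Strict monotonicity of \(\lambda\mapsto\cN\) follows from \(\partial_\lambda R_0(\lambda)=R_0(\lambda)^2\ge 0\) evaluated at a nontrivial maximizer (using that \(R_0(\lambda):L^q\to L^2\) is bounded when \(p>d\), so the pairing \(\langle R_0(\lambda)w,R_0(\lambda)w\rangle\) makes sense). Continuity is inherited from norm-continuity of \(\lambda\mapsto R_0(\lambda):L^q\to L^{q'}\) on compact subsets of the gap together with the strong \(L^q\)-convergence of maximizers. For \(\lambda\to m^-\), the positive-subspace symbol \(1/(\sqrt{|k|^2+m^2}-\lambda)\) blows up near \(k=0\) like \((m-\lambda)^{-1}\); a fixed Schwartz \(w\in P_+L^2\) concentrated at low frequencies gives \(\langle w,R_0(\lambda)w\rangle\to+\infty\), hence \(\cN\to+\infty\). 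For \(\lambda\to-m^+\), the divergent part of \(R_0(\lambda)\) lies on \(P_-L^2\), where the quadratic form is non-positive and irrelevant to the supremum; the positive-subspace piece has Fourier multiplier bounded by \((m-\lambda)^{-1}\to(2m)^{-1}\) uniformly in \(k\). The delicate step will be transferring this uniform \(L^2\)-bound to a uniform \(L^q\to L^{q'}\) bound for \(P_+R_0(\lambda)P_+\), which I would handle by splitting the Fourier symbol near and away from \(k=0\) and invoking Hardy-Littlewood-Sobolev on the low-frequency piece together with Mihlin-type estimates on the high-frequency one, both valid under \(p>d\). Granted this uniform bound, \(\cN(\lambda,p)\) stays bounded, the monotone limit \(\cN_c(p)\) exists and is finite, and positivity \(\cN_c(p)>0\) follows by testing with a fixed Schwartz \(w_0\in P_+L^2\) and passing to the limit in \(\cN(\lambda,p)\ge\langle w_0,R_0(\lambda)w_0\rangle/\|w_0\|_q^2\).
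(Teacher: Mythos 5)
Your overall architecture coincides with the paper's: concentration-compactness on $|w_n|^q\,\rd x$ with vanishing excluded by the (locally) integrable kernel, dichotomy excluded by the strict superadditivity coming from the scaling $J(s)=s^{2/q}J(1)$ with $2/q>1$, the same Euler--Lagrange identification of $\tau$, monotonicity from $\partial_\lambda R_0(\lambda)=R_0(\lambda)^2\ge0$, and continuity from the resolvent identity. However, there is a genuine gap in your treatment of the limit $\lambda\to m^-$. You claim that a \emph{fixed} Schwartz $w\in P_+\mathrm L^2$ concentrated at low frequencies gives $\langle w,R_0(\lambda)\,w\rangle\to+\infty$. This is false for $d\ge3$: on the positive spectral subspace the symbol increases monotonically to $\big(\sqrt{|k|^2+m^2}-m\big)^{-1}\sim 2\,m\,|k|^{-2}$ as $\lambda\uparrow m$, and $|k|^{-2}$ is locally integrable in dimension $d\ge3$, so for any fixed Schwartz $w$ the quadratic form converges to a \emph{finite} limit (cf.~\eqref{eq:upper_component}, where the limit is $2\,m\int|\hat f(k)|^2\,|k|^{-2}\,\rd k<\infty$ in $d=3$). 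Since the theorem covers all $p>d\ge1$, the argument breaks precisely where it is needed. The fix is the one the paper uses: take a $\lambda$-dependent dilation $f=L^{-d/q}g(\cdot/L)$ normalized in $\mathrm L^q$, bound $(-\Delta+m^2-\lambda^2)^{-1}\ge(m^2-\lambda^2)^{-1}\big(1+(m^2-\lambda^2)^{-1}\Delta\big)$, and choose $L=(m-\lambda)^{-\alpha}$ with $\alpha\in(1/2,p/d)$; the hypothesis $p>d$ guarantees this window is nonempty and the $\mathrm L^q$-normalization produces a factor $L^{-d/p}$ that is beaten by $(m-\lambda)^{-1}$.

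A secondary remark on the other endpoint: the ``delicate step'' you flag for $\lambda\to-m^+$ (a uniform $\mathrm L^q\to\mathrm L^{q'}$ bound for $P_+R_0(\lambda)P_+$) is unnecessary, since $\lambda\mapsto\cN(\lambda,p)$ is increasing and finite for each $\lambda$, so the limit as $\lambda\downarrow-m$ is automatically finite; the only nontrivial point is strict positivity. Your proposal for that point --- testing with a fixed function in the positive spectral subspace that also lies in $\mathrm L^q$ --- is correct and is essentially the paper's construction~\eqref{Existence:w}, though one should actually exhibit such a function (the paper does so by a smooth, compactly supported Fourier-space construction of an eigenvector field of $M(k)$), since membership in $\mathrm{Ran}\,P_+\cap\mathrm L^q$ is not completely automatic.
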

The proof of the first part relies on the profile decomposition method (concentration-compactness) used by Lions~\cite{Lions-84}, and is given in the next section. Theorem~\ref{th:w} implies the existence of an optimal potential and an optimal spinor. 
\begin{corollary}\label{Cor:Main1} 
Under the assumptions of Theorem~\ref{Thm:Main1}, the infimum~\eqref{eq:def:alpha} is attained for any $\lambda\in(-\,m,m)$ by a potential $V=|\Psi|^{2/(p-1)}$, where $\Psi\in\mathrm L^2(\R^d,\R^N)$ solves the \emph{nonlinear Dirac equation}
\be{eq:def:KellerDirac}
\Dirac \Psi-| \Psi |^{\frac2{p-1}}\,\Psi=\lambda\,\Psi\,,
\ee
such that $\lambda_D(V)=\lambda$ and $\big(\ird{|\Psi|^{2\,p/(p-1)}}\big)^{1/p}=\nrm Vp=\alpha_D(\lambda,p)=1/\cN(\lambda,p)$.
\end{corollary}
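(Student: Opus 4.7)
The strategy is to invert the changes of variables introduced just before Theorem~\ref{th:w} and to translate the maximizer of~\eqref{eq:Nbis} into the announced optimal potential and associated spinor. Fix $\lambda\in(-\,m,m)$; Theorem~\ref{th:w} supplies a function $w\in\mathrm L^q(\R^d,\C^N)$ with $q=2p/(p+1)$, $\|w\|_q=1$, satisfying the Euler–Lagrange identity $R_0(\lambda)\,w=\tau\,|w|^{-2/(p+1)}\,w$ with $\tau:=\cN(\lambda,p)>0$. I then define
\[
V:=\tau^{-1}\,|w|^{2/(p+1)},\qquad \phi:=|w|^{-1/(p+1)}\,w,\qquad \Psi:=\tau^{-(p-1)/2}\,|w|^{-2/(p+1)}\,w.
\]
Direct power-law computations give $\|\phi\|_2=1$, $\|V\|_p^p=\tau^{-p}\int|w|^q=\tau^{-p}$, $|\Psi|^{2/(p-1)}=\tau^{-1}\,|w|^{2/(p+1)}=V$ and $\int_{\R^d}|\Psi|^{2p/(p-1)}\,\rd x=\int V^p=\tau^{-p}$, which already yields the announced constraint and the relation $\|V\|_p=\alpha_D(\lambda,p)=1/\tau=1/\cN(\lambda,p)$.

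Next I verify that $\lambda_D(V)=\lambda$, i.e.\ that $V$ is admissible for~\eqref{eq:def:alpha}. Plugging $\phi$ into $K_V(\lambda)$ and using the Euler–Lagrange equation gives
\[
K_V(\lambda)\,\phi=\tau^{-1}\,|w|^{1/(p+1)}\,R_0(\lambda)\,w=\tau^{-1}\,|w|^{1/(p+1)}\cdot\tau\,|w|^{-2/(p+1)}\,w=\phi,
\]
so $1$ is an eigenvalue of $K_V(\lambda)$; since $\tau$ is the supremum in~\eqref{eq:def:N}, this eigenvalue coincides with $\mu_1\big(K_V(\lambda)\big)$. Strict monotonicity of $\lambda'\mapsto\mu_1\big(K_V(\lambda')\big)$ in the gap, recorded in Section~\ref{Sec:Birman-SchwingerOperator}, yields $\mu_1\big(K_V(\lambda')\big)<1$ for every $\lambda'\in(-\,m,\lambda)$; since the negative eigenvalues of $K_V(\lambda')$ cannot equal $1$, Proposition~\ref{Prop:Birman-Schwinger} rules out any eigenvalue of $\Dirac-V$ below $\lambda$, so $\lambda=\lambda_D(V)$.

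It remains to derive the nonlinear Dirac equation. Applying $(\Dirac-\lambda)$ to the Euler–Lagrange equation and substituting $|w|^{-2/(p+1)}w=\tau^{(p-1)/2}\Psi$ gives $w=\tau^{(p+1)/2}\,(\Dirac-\lambda)\Psi$, while multiplying the definitions of $V$ and $\Psi$ yields $V\Psi=\tau^{-(p+1)/2}\,w$. Combining the two identities produces $(\Dirac-\lambda)\Psi=V\,\Psi=|\Psi|^{2/(p-1)}\Psi$, which is exactly~\eqref{eq:def:KellerDirac}. The only genuinely nontrivial remaining point is to certify that $\Psi\in\mathrm L^2(\R^d,\C^N)$: the natural representation $\Psi=\tau^{-p/2}\,R_0(\lambda)\,\sqrt V\,\phi$ with $\sqrt V\,\phi=\tau^{-1/2}\,w\in\mathrm L^q(\R^d,\C^N)$ displays $\Psi$ as the image under $R_0(\lambda)$ of an $L^q$ function, but $\Psi\in L^2$ really requires the improved integrability of the optimizer $w$ beyond the mere $L^q$-membership given by Theorem~\ref{th:w}. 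This regularity bootstrap, handled in Section~\ref{Sec:Regularity}, is the main obstacle of the proof; once it is available, the identities above reduce everything else to mere bookkeeping of power laws.
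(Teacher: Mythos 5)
Your proposal follows essentially the same route as the paper: invert the substitutions $w\mapsto(\phi,W,V,\Psi)$, read off the norms by power counting, and apply $\Dirac-\lambda$ to the Euler--Lagrange equation to obtain~\eqref{eq:def:KellerDirac}. In fact you supply a detail the paper leaves implicit, namely the verification via the Birman--Schwinger principle and the monotonicity of $\lambda'\mapsto\mu_1\big(K_V(\lambda')\big)$ that $\lambda$ is indeed the \emph{lowest} eigenvalue of $\Dirac-V$ in the gap.

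One correction to your final paragraph: the membership $\Psi\in\mathrm L^2(\R^d,\C^N)$ is not the ``main obstacle'' and is not what Section~\ref{Sec:Regularity} provides (Proposition~\ref{thm:regularity} \emph{assumes} $\Psi\in\mathrm H^1$ and upgrades to smoothness, so deferring to it is circular). The point is elementary: $\Psi=\tau^{-(p+1)/2}R_0(\lambda)\,w$ with $w\in\mathrm L^q$, $q=\frac{2p}{p+1}$, and by Remark~\ref{rmk:lemma_applies} the kernel of $R_0(\lambda)$ lies in $\mathrm L^r(\R^d)$ for every $r<\frac d{d-1}$; Young's inequality with $\frac12=\frac1r+\frac1q-1$ forces $r=\frac{2p}{2p-1}$, which satisfies $\frac{2p}{2p-1}<\frac d{d-1}$ precisely because $2p>d$. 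Hence $\Psi\in\mathrm L^2$ follows directly from $w\in\mathrm L^q$, with no bootstrap needed.
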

\begin{proof}[Proof of Theorem~\ref{Thm:Main1}]
Our main Theorem~\ref{Thm:Main1} is a direct consequence of Theorem~\ref{th:w} and Corollary~\ref{Cor:Main1}. Since $\cN_c(p) > 0$, we have indeed $\alpha_c(p) := 1/\cN_c(p) < \infty$.
\end{proof}

\begin{proof}[Proof of Corollary~\ref{Cor:Main1}]
First, we translate the Euler-Lagrange equation for $w$ into an equation for the potential $V$ and an eigenfunction (not normalized) $\Psi$. We set
\[
\Psi=\tau^{\frac{1-p}2}\,|w|^{-\frac2{p+1}}\,w\quad \mbox{so that}\quad
w=\tau^{\frac{p+1}2}\,| \Psi |^{\frac2{p-1}}\,\Psi\,.
\]
Applying $\Dirac-\lambda$ to~\eqref{eq:EL_for_w} shows that $\Psi$ satisfies the nonlinear Dirac equation~\eqref{eq:def:KellerDirac}. The optimal potential $W$ for the $\cN(\lambda,p)$ problem in~\eqref{eq:def:N} is $W=|w|^{\frac2{p+1}}=\tau\,| \Psi |^{\frac2{p-1}}$, and finally, the optimal potential $V$ for the $\alpha_D(\lambda,p)$ problem is, as wanted,
\[
V=\frac{W}{\cN(\lambda,p)}=| \Psi |^{\frac2{p-1}}\,.
\]
We recover the value of $\cN(\lambda,p)$ and $\alpha_D(\lambda,p)$ from the solution~$\Psi$ because
\[
\ird{ | \Psi |^\frac{2\,p}{p-1}}=\tau^{-p} \ird{ |w|^\frac{2\,p}{p+1}}=\tau^{-p}=\cN(\lambda,p)^{-p}=\alpha_D(\lambda,p)^p\,.
\]
Among all solutions of~\eqref{eq:def:KellerDirac}, $\Psi$ is the one with the smallest $\mathrm L^{\frac{2\,p}{p-1}}(\R^d,\C^N)$ norm so that $\lambda=\Lambda_D(\alpha,p)$ and $\Psi$ actually solves~\eqref{eq:def:NLD_Intro}. 
\end{proof}

\subsection{Proof of Theorem~\ref{th:w}}\label{sec:proof:w}

We now prove Theorem~\ref{th:w}. We consider a more general case, and study a general optimization problem.
In what follows, we use the notation
\[
\big\langle w,K * w\big\rangle :=\iint_{\R^d\times\R^d} \big\langle w(x),K(x-y)\,w(y)\big\rangle_{\C^N}\,\rd x\,\rd y
\]
and define for any $s>0$ the maximization problem
\be{eq:J}
J(s) :=\sup\left\{\big\langle w, K * w \big\rangle\,:\,w\in\mathrm L^q(\R^d,\C^N)\,, \; \ird{ |w|^q}=s\right\}\,.
\ee
Here, $K$ is a convolution operator, or equivalently a multiplication operator in Fourier space. In our case, $K(x-y)=R_0(\lambda)(x-y)$ is the kernel of the Dirac resolvent, but we state a more general result.
\begin{lemma} \label{lem:key_lemma}
Let $q\in(1,2)$, set $q' :=q/(q-1)\in (2,+\infty)$ and $r :=q'/2\in (1,+\infty)$. Let $K : \R^d \to \cM_N(\C)$ be a matrix-valued function satisfying $K(x)=K(-x)^*$, and such that one of the two properties holds:
\begin{enumerate}[label=\emph{(\roman*)}]
\item\label{item:1-key_lemma} either $K\in\mathrm L^r(\R^d, \cM_N(\C))$,
\item\label{item:2-key_lemma} or $K=R_0(\lambda)$ is a Dirac resolvent for some $\lambda\in(-\,m,m)$.
\end{enumerate}
Then the map $w \mapsto \big\langle w, K * w \big\rangle$ is well-defined on $\mathrm L^q(\R^d,\C^N)$ and real valued. 
Moreover, if $J(1) > 0$, then~\eqref{eq:J} admits maximizers.
\end{lemma}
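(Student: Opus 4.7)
The proof proceeds in two parts: establishing that the quadratic form $\mathcal{Q}(w) := \langle w, K * w\rangle$ is well-defined and real-valued on $\mathrm{L}^q(\R^d,\C^N)$, and constructing maximizers for $J(1)$ by concentration-compactness.

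For well-definedness in case~\ref{item:1-key_lemma}, I would apply Young's convolution inequality with the exponent identity $1/(q'/2) + 1/q = 1 + 1/q'$ to obtain $\|K * w\|_{q'} \le \|K\|_{q'/2}\,\|w\|_q$, and then use Hölder's inequality to get $|\mathcal{Q}(w)| \le \|K\|_{q'/2}\,\|w\|_q^2$. In case~\ref{item:2-key_lemma} I would reduce to case~\ref{item:1-key_lemma} by showing that $R_0(\lambda) \in \mathrm{L}^{q'/2}(\R^d, \cM_N(\C))$: the resolvent kernel decays exponentially at infinity thanks to the mass gap $m^2 - \lambda^2 > 0$, while at the origin it is bounded when $d=1$ and behaves at worst like $|x|^{-(d-1)}$ when $d \ge 2$, hence belongs to $\mathrm{L}^{q'/2}_{\rm loc}$ as soon as $(d-1)\,q'/2 < d$, i.e., $q > 2d/(d+1)$. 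Real-valuedness then follows by Fubini's theorem combined with the Hermitian symmetry $K(-x)^* = K(x)$, which gives $\overline{\mathcal{Q}(w)} = \mathcal{Q}(w)$ after exchanging the roles of $x$ and $y$.

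For the existence of maximizers, the key observation is the scaling identity $J(s) = s^{2/q}\,J(1)$ which comes from the homogeneity $w \mapsto \tau\,w$. Since $q \in (1, 2)$ gives $2/q > 1$, the map $s \mapsto s^{2/q}$ is strictly superadditive, which yields the strict subadditivity
\[
J(s_1) + J(s_2) < J(s_1 + s_2)\quad\text{for all }\; s_1, s_2 > 0.
\]
Let $(w_n)$ be a maximizing sequence with $\|w_n\|_q^q = 1$. I would apply Lions' concentration-compactness principle to the probability densities $\rho_n := |w_n|^q$. The dichotomy alternative is ruled out by strict subadditivity. To rule out vanishing, I would approximate $K$ in $\mathrm{L}^{q'/2}$ by compactly supported bounded kernels $K_R$: the contribution of $K_R$ to $\mathcal{Q}(w_n)$ can be controlled in terms of $\sup_{y \in \R^d} \int_{B_R(y)} \rho_n$, which tends to $0$ under vanishing, while the tail $K - K_R$ contributes a uniformly small error; this would contradict $J(1) > 0$. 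Hence, after translating each $w_n$ by a suitable $y_n \in \R^d$, we may assume $w_n \rightharpoonup w_\infty \neq 0$ in $\mathrm{L}^q(\R^d,\C^N)$.

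Finally, applying a Brezis-Lieb type decomposition to both the norm and the quadratic form (for the latter, using that $K * w_\infty \in \mathrm{L}^{q'}$ tests against $w_n - w_\infty \rightharpoonup 0$ to kill the cross terms), one obtains
\[
1 = \|w_\infty\|_q^q + \lim_n \|w_n - w_\infty\|_q^q\,,\qquad J(1) = \mathcal{Q}(w_\infty) + \lim_n \mathcal{Q}(w_n - w_\infty)\,.
\]
Writing $\tilde s := \|w_\infty\|_q^q$ and combining $\mathcal{Q}(w_\infty) \le J(\tilde s) = \tilde s^{2/q} J(1)$ with $\lim_n \mathcal{Q}(w_n - w_\infty) \le J(1 - \tilde s) = (1-\tilde s)^{2/q} J(1)$, the strict superadditivity of $s \mapsto s^{2/q}$ forces $\tilde s = 1$. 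By uniform convexity of $\mathrm{L}^q$, weak convergence plus norm convergence gives strong convergence $w_n \to w_\infty$ in $\mathrm{L}^q$, and $\mathcal{Q}(w_\infty) = J(1)$. The main obstacle is the vanishing alternative, since the approximation of $K$ by compactly supported pieces has to be performed in $\mathrm{L}^{q'/2}$, and in case~\ref{item:2-key_lemma} this depends crucially on the mass gap providing sufficient decay of the resolvent kernel.
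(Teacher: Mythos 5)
Your overall architecture (well-posedness by convolution estimates, the scaling identity $J(s)=s^{2/q}\,J(1)$ with strict superadditivity, concentration-compactness) is the same as the paper's, but two steps have genuine gaps.

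First, your reduction of case~\ref{item:2-key_lemma} to case~\ref{item:1-key_lemma} fails precisely in the only situation where case~\ref{item:2-key_lemma} is not already contained in case~\ref{item:1-key_lemma}. Since $R_0(\lambda)(x)$ behaves like $|x|^{1-d}$ at the origin, it lies in $\mathrm L^r_{\rm loc}$ only for $r<d/(d-1)$ and merely in weak $\mathrm L^{d/(d-1)}$ at the endpoint; the endpoint $r=q'/2=d/(d-1)$, i.e.\ $q=2\,d/(d+1)$, is exactly the critical exponent case $p=d\ge2$ that the separate hypothesis~\ref{item:2-key_lemma} is designed to cover (see Remark~\ref{rmk:lemma_applies}), and your condition $q>2\,d/(d+1)$ excludes it. The paper works instead under the weaker assumption $K\in\mathrm L^r_w(\R^d)\cap\mathrm L^r(\cB_1^c)$: well-posedness comes from the weak-type Hardy--Littlewood--Sobolev inequality rather than Young's inequality, and in the vanishing step the near-diagonal part $\1_{\cB_R}K$ is estimated on a covering by unit cubes using the weak norm, while only the far part (which is in strong $\mathrm L^r$ by the exponential decay due to the mass gap) is made small in $\mathrm L^r$. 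Your plan to approximate $K$ in $\mathrm L^{q'/2}$ by compactly supported bounded kernels breaks down at this endpoint for the same reason.

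Second, your final step invokes the Brezis--Lieb identity $1=\|w_\infty\|_q^q+\lim_n\|w_n-w_\infty\|_q^q$, which for $q\neq2$ requires almost-everywhere convergence of $w_n$ to $w_\infty$; weak $\mathrm L^q$ convergence alone does not give it (take $w_\infty$ plus a weakly null oscillating perturbation: the limit of $\|w_n\|_q^q$ is then strictly smaller than the sum of the two contributions when $q<2$, so the inequality you need goes the wrong way). A maximizing sequence is only bounded in $\mathrm L^q$ and you provide no local compactness from which to extract an a.e.\ convergent subsequence. The splitting of the quadratic form itself is fine (the cross terms vanish because $K*w_\infty\in\mathrm L^{q'}$), but the norm splitting is not justified. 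The paper avoids this entirely: dichotomy is excluded by cutting $w_n$ into three annular pieces and controlling the cross terms through the decay of $K$, which yields tightness; then the limit $\langle w_n\1_{\cB_\rho},K*w_n\rangle\to\langle w_\infty\1_{\cB_\rho},K*w_\infty\rangle$ is obtained from compactness of the operator with kernel $\1_{\cB_\rho}(x)\,K(x-y)$ from $\mathrm L^q$ to $\mathrm L^{q'}$ (Rellich--Kondrachov in the Dirac case, Kolmogorov--Riesz--Fr\'echet in case~\ref{item:1-key_lemma}), and $\|w_\infty\|_q=1$ follows a posteriori from $\langle w_\infty,K*w_\infty\rangle=J(1)$ and the scaling relation rather than from Brezis--Lieb.
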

Before proving this result, we make several remarks.
\begin{remark}
Lemma~\ref{lem:key_lemma} fails at the endpoint $q=2$. Indeed, by applying the Fourier transform we have
\[
\big\langle w, K * w \big\rangle=\int_{\R^d}\left\langle \widehat{w}(k), \widehat{K}(k)\,\widehat{w}(k) \right\rangle_{\C^N}\,\rd k\,.
\]
This means that all optimizing sequences must concentrate on Dirac masses in Fourier space at locations where $k \mapsto \sup \operatorname{spec} \big(\widehat{K}(k)\big)$ has maxima. Since the Fourier transform is an isometry on $\mathrm L^2(\R^d)$, we deduce that the maximization problem has no maximum in general. The same argument shows that the existence of optimizers is closely related to the fact that the Fourier transform is not a bijection between $\mathrm L^q(\R^d)$ and $\mathrm L^{q'}(\R^d)$ if $1 < q < 2$.
\end{remark}
\begin{remark} \label{rmk:lemma_applies}
In the case of the Dirac operator, one has an explicit expression for $K=R_0(\lambda)$, the fundamental solution of $\Dirac-\lambda$. Using that
\[
\(\Dirac-\lambda\)^{-1}=\(\Dirac+\lambda\) \dfrac1{-\Delta+m^2-\lambda^2}\,,
\]
we first deduce that $R_0(\lambda)(\cdot)$ is the Fourier transform of
\[
g_\lambda(k)=\(\sum_{j=1}^d \alpha_j\,k_j+m\,\beta+\lambda\,\mathbb I_N\) \frac1{k^2+m^2-\lambda^2}\,.
\]
The function $k \mapsto g_\lambda(k)$ is analytic on $\R^d$ because there is no singularity in the denominator since $| \lambda | < m$, so its Fourier transform is exponentially decaying in $x$. Actually, we have
\begin{multline*}
R_0(\lambda)(x)=\frac{c_{d,\lambda}}{|x|^{d/2-1}}
\Bigg(\ri\sum_{j=1}^d\alpha_j\,\frac{x_j}{|x|}\,\sqrt{m^2-\lambda^2}\,K_{\frac{d}2}\Big(\sqrt{m^2-\lambda^2}\;|x|\Big)\\
+(m\,\beta+\lambda\,\mathbb I_N)\,K_{\frac{d}2-1}\Big(\sqrt{m^2-\lambda^2}\;|x|\Big)\Bigg)
\end{multline*}
where $c_{d,\lambda}=\frac1{2\pi}\,\big(\frac{\sqrt{m^2-\lambda^2}}{2\pi}\big)^{d/2-1}$ and $K_\nu$ is the modified Bessel function of the second kind. In particular, there is $C\ge 0$ so that
\[
\big| R_0(\lambda)(x) \big|\le
\begin{cases}
C\,|x|^{1-d}&\mbox{as}\,|x|\to 0\,,\\
C\,\re^{-\sqrt{m^2-\lambda^2}\;|x|}&\mbox{as}\,|x|\to+\infty\,.
\end{cases}
\]
So, in the Dirac case, we have $R_0(\lambda)\in\mathrm L^r(\R^d)$ for all $r < \frac{d}{d-1}$ and $R_0(\lambda)\in\mathrm L^{\frac{d}{d-1}}_w(\R^d)$. In particular, the case~\ref{item:2-key_lemma} is not covered by~\ref{item:1-key_lemma} only in the case where $r=\frac{d}{d-1}$, which corresponds to the critical exponent case $p=d\ge 2$, that is, $q=\frac{2\,d}{d+2}$ in~\eqref{eq:Nbis}.
\end{remark}
\begin{remark} \label{rmk:J(1)>0} Let us consider the case $s=1$ in~\eqref{eq:J}.
In order to see that $J(1)>0$ in the Dirac case with $\lambda\in(-\,m,m)$, let $f\in\mathrm L^q(\R^d,\C)$ be a normalized function and let $\phi_+\in\C^N$ be a normalized vector such that $\beta\,\phi_+=\phi_+$. We find that
\[
(\Dirac+\lambda)\,f\,\phi_+=(m+\lambda)\,f \phi_++(-\,\ri\,\nabla f) \cdot \alpha\,\phi_+\,.
\]
Moreover, by~\eqref{eq:def:alphai_beta}, we have that $\big\langle \phi_+,\alpha_j\,\phi_+\big\rangle_{\C^N}=0$. Thus:
\be{eq:upper_component}
\begin{split}
J(1)\ge\big\langle f\,\phi_+, R_0(\lambda)\,f\,\phi_+\big\rangle&=
\big\langle f\,\phi_+, \big(-\Delta+m^2-\lambda^2\big)^{-1}\,(\Dirac+\lambda)\,f\,\phi_+\big\rangle\\
&=
(m+\lambda)\,\big\langle f, \big(-\Delta+m^2-\lambda^2\big)^{-1}\,f \big\rangle_{\mathrm L^2(\R^d,\C)} > 0\,.
\end{split}
\ee
\end{remark}

\begin{proof}[Proof of Lemma~\ref{lem:key_lemma}]
First, we note that the condition $K(x)=K(-x)^*$ reads $\widehat{K}(k)=\widehat{K}(k)^*$, so the operator $K$ is symmetric.

In the first part of the proof, we cover both cases~\ref{item:1-key_lemma} and~\ref{item:2-key_lemma} by assuming
\be{eq:weaker_condition}
K\in\mathrm L^r_w\big(\R^d, \cM_N(\C)\big) \cap \mathrm L^r\big(\cB_1^c, \cM_N(\C)\big)\quad \mbox{with}\quad \cB_R :=\{ x\in\R^d\,:\,|x|< R\}
\ee
 with $\frac2q+\frac1r=2$. From the Hardy-Littlewood-Sobolev inequality, and since $K\in\mathrm L^r_w(\R^d)$, we have
\be{eq:control_w1_w2}
\forall\,w_1\,,\;w_2\in\mathrm L^q(\R^d)\,,\quad\big|\big\langle w_1, K*w_2 \big\rangle \big|\le C\,\|w_1\|_q\,\|w_2\|_q\,\|K\|_{r,w}\,.
\ee
In particular, $w \mapsto\big\langle w, K*w\big\rangle$ is well-defined and real valued on $\mathrm L^q(\R^d)$.

Using the scaling $w_s=s^{1/q}\,w_1$, we obtain that
\be{eq:scaling-J} 
J(s)=s^{2/q}\,J(1)\,.
\ee
Since $J(1) > 0$, we deduce first that $J(s)$ is increasing. Also, since $2/q > 1$, $J(s)$ is convex and so we have the strong binding inequality
\be{eq:strong_binding}
\forall\,s\,,\; s' > 0\,,\quad J(s+s') > J(s)+J(s')\,.
\ee

Let $(w_n)_{n\in\N}$ be a maximizing sequence for $J(1)$. Our argument relies on the concentra\-tion-compactness method for the sequence $(w_n)_{n\in\N}$, following the approach of Lions~\cite{Lions-84} and using Levy's functional. It differs from the concentration-compactness method used in~\cite{ELS2021}, as we work directly with the Birman-Schwinger operator instead of the min-max quadratic form. We set
\[
Q(\rho) :=\liminf_{n \to+\infty} Q_n(\rho)\quad \mbox{with}\quad Q_n(\rho) :=\sup_{x\in\R^d}\int_{\cB(x,\rho)} | w_n |^q\,\rd x\,.
\]
It is clear from the definition that $\rho \mapsto Q(\rho)$ is non-decreasing, and that $Q(\rho)\le 1$ for all~$\rho>0$. We set
\[
\mu :=\lim_{\rho \to+\infty} Q(\rho)\in [0, 1]\,.
\]
We divide the proof in the classical steps of the concentration-compactness method and start by discarding the cases $\mu=0$ (vanishing) and $\mu<1$ (dichotomy).

\medskip\noindent\emph{\textbf{$\bullet$ Vanishing}}. 
Fix $\varepsilon :=J(1)/4 > 0$. Since $K\in\mathrm L^{r} (\cB_1^c)$, there is $R>1$ large enough so that
\[
\nrm{K}{\mathrm L^{r}(\cB_R^c)}\le \varepsilon\,.
\]
By Young's inequality, since $\frac2q+\frac1r=2$, we get that for all $w\in\mathrm L^q(\R^d)$ with \hbox{$\|w\|_q=1$},
\[
\big\langle w, (\Id_{\cB_R^c} K)* w \big\rangle\le \varepsilon\,\nrm wq^2\le \varepsilon\,.
\]
We now estimate the contribution of $\Id_{\cB_R}\,K$. For $z\in\Z^d$, let $C_z$ be the cube $z+[0, 1]^d$, so that $\{C_z \}_{z\in\Z^d}$ covers $\R^d$. For a function $w : \R^d \to \C^N$, we have
\begin{align*}
\big\langle w,\(\Id_{\cB_R}\,K\)*w\,\big\rangle
&=\sum_{z, z'\in\Z^d}\iint_{C_z\times C_{z'}}\big\langle w (x),(\Id_{\cB_R}\,K)(x-y)\,w(y)\big\rangle_{\C^N}\,\rd x\,\rd y\\
&\le\|K\|_{\mathrm L^r_w(\R^d)} \sum_{z, z'\in\Z^d} \nrm{w}{\mathrm L^q(C_z)}\,\nrm{w}{\mathrm L^q(C_{z'})}\,\Id_{\left\{|z-z'|\le R+2\,\sqrt{d}\right\}}
\end{align*}
using again the Hardy-Littlewood-Sobolev inequality. The double sum can be seen as a discrete convolution, and we apply Young's inequality with $z \mapsto\|w\|_{\mathrm L^q(C_z)}\in\ell^2(\Z^d)$ and $z \mapsto \Id_{\left\{| z |\le R+2\,\sqrt{d}\right\}}\in\ell^1(\Z^d)$ to bound
\[
\big\langle w, (\Id_{\cB_R}\,K)* w \big\rangle\le C_R \sum_{z\in\Z^d} \nrm{w}{\mathrm L^q(C_z)}^2
\le C_R \,\sup_{z\in\Z^d} \nrm{w}{\mathrm L^q(C_z)}^{2-q}\,\nrm wq^q
\]
where $C_R$ is a positive constant which is independent of $w$: for all $w\in\mathrm L^q(\R^d)$ with \hbox{$\|w\|_q=1$}, we have
\[
\big\langle w, K* w \big\rangle\le \varepsilon+C_R\,\sup_{z\in\Z^d} \nrm{w}{\mathrm L^q(C_z)}^{2-q}\,.
\]
Applying this estimate to a maximizing sequence $(w_n)_{n\in\N}$ for $J(1)=4\,\varepsilon$, we obtain that, up to a subsequence,
\[
\tfrac12\,J(1)\le\left\langle w_n, K* w_n \right\rangle\le \tfrac14\,J(1)+C_R \sup_{z\in\Z^d} \nrm{w_n}{\mathrm L^q(C_z)}^{2-q}\,.
\]
This implies
\[
Q_n\(\sqrt{d}\)\ge \sup_{z\in\Z^d} \nrm{w_n}{\mathrm L^q(C_z)}^q\ge \frac{J(1)}{\(4\,C_R)\)^{\frac q{2-q}}}>0
\]
and finally $\mu > 0$, which discards the \emph{vanishing} case of the concentration-compactness method.

\medskip\noindent\emph{\textbf{$\bullet$ Dichotomy}}. 
By definition of $Q_n$, there are sequences of centers $x_n\in\R^d$ and radii $\rho_n > 0$ going to infinity so that
\[
\lim_{n\to+\infty}\int_{\cB(x_n,\rho_n)} |w_n |^q\,\rd x=\mu\,.
\]
Without loss of generality, by translating the functions $w_n$, we may assume $x_n=0$. In addition, up to a non-displayed subsequence, we have that for all $\eps > 0$, there is $n_0$ large enough so that, for all $n\ge n_0$, we have
\[
\int_{\rho_n<|x|<2\,\rho_n}|w_n|^q\,\rd x<\eps\quad\mbox{and}
\quad\left|
1-\mu-\int_{|x|>2\,\rho_n}|w_n|^q\,\rd x\right|<\eps\,.
\]
We set
\[
\begin{cases}
w_n^{(1)}&:=w_n\,\Id_{\left\{|x|\le \rho_n\right\}}\,,\\
w_n^{(2)}&:=w_n\,\Id_{\left\{\rho_n\le |x|\le 2\,\rho_n\right\}}\,,\\
w_n^{(3)}&:=w_n\,\Id_{\left\{|x| > 2\,\rho_n\right\}}\,.
\end{cases}
\]
Introducing $E(w_1, w_2) :=\langle w_1, K* w_2 \rangle$ and $\cE(w):=E(w,w)$, we have
\[\label{eq:estimate-E(wn)}
\begin{split}
\cE(w_n)&=\cE\(w_n^{(1)}\)+\cE\(w_n^{(2)}\)+\cE\(w_n^{(3)}\)\\
&\quad+2\,{\rm Re} \(E\(w_n^{(1)}, w_n^{(2)}\)+E\(w_n^{(1)}, w_n^{(3)}\)+E\(w_n^{(2)}, w_n^{(3)}\)\)\\
&\le J(\mu)+J(\eps)+J(1-\mu+\eps)\\
&\quad+2\,{\rm Re} \(E\(w_n^{(1)}, w_n^{(2)}\)+E\(w_n^{(1)}, w_n^{(3)}\)+E\(w_n^{(2)}, w_n^{(3)}\)\)\,.
\end{split}
\]
{}From~\eqref{eq:control_w1_w2}, and the fact that $\|w_n^{(2)}\|_q\le \varepsilon^{1/q}$, we get that
\[
E\(w_n^{(1)}, w_n^{(2)}\)\le C\,\mu\,\varepsilon^{1/q}\quad\mbox{and}\quad E\(w_n^{(2)}, w_n^{(3)}\)\le C\, (1-\mu)\,\eps^{1/q}\,.
\]
Finally, we have
\begin{align*}
\left| E\(w_n^{(1)}, w_n^{(3)}\) \right|&\le\int_{|x|\le \rho_n}\int_{| y |\ge 2 \rho_n}\big|\big\langle w_n(x) , K(x-y)\,w_n(y) \big\rangle_{\C^N} \big|\,\rd x\,\rd y\\
&\le \iint_{\R^d \times \R^d} | w_n (x)|\,| w_n(y)|
\,(K\,\Id_{\cB_{\rho_n}^c})(x-y)\,\rd x\,\rd y\\
&\le\left\|K\,\Id_{\cB_{\rho_n}^c}\right\|_r\leq C\,\eps
\end{align*}
for $n$ large enough, where in the last line we used Young's inequality, and the fact that $\rho_n \to+\infty$.
Thanks to these facts, we can conclude that
\[
J(1)\le J(\mu)+J(1-\mu+\eps)+J(\eps)+C\,\eps^{1/q}\,.
\]
In the limit as $\eps \to 0$, we obtain $J(1)\le J(\mu)+J(1-\mu)$, which contradicts~\eqref{eq:strong_binding} if $\mu \neq 1$. So $\mu=1$, which discards the \emph{dichotomy} case of the concentration-compactness method.

\medskip\noindent\emph{\textbf{$\bullet$ Convergence for tight sequences}}.
At this point, we proved that for all $\eps > 0$ there is $\rho > 0$ and $n_0$ large enough so that, for all $n > n_0$, and after appropriate translations and subsequences,
\be{eq:Lq-norm-wn-small}
\nrm{\Id_{\cB_\rho^c}\,w_n}q\le \eps\,.
\ee
In other words, the sequence $(w_n)_{n\in\N}$ is tight in $\mathrm L^q(\R^d,\C^N)$. The sequence $(w_n)_{n\in\N}$ is bounded in the reflexive Banach space $\mathrm L^q(\R^d,\C^N)$. Hence, up to a non-displayed subsequence, $(w_n)_{n\in\N}$ converges weakly to some $w\in\mathrm L^q(\R^d,\C^N)$, and we have $\|w\|_q\le 1$.

Let us prove that $\cE(w)=J(1)$. Let $\eps > 0$, and let $\rho > 0$ be large enough so that~\eqref{eq:Lq-norm-wn-small} holds. In particular, by Hardy-Littlewood-Sobolev, we have
\[
\left| \langle w_n\,\1_{\cB_\rho^c}, K*w_n \big\rangle \right|\le C\,\|w_n\|_q\,\|w_n\,\1_{\cB_\rho^c}\|_q\le C\,\varepsilon\,,
\]
and we have a similar inequality with $w$ instead of $w_n$. On the other hand, we have
\[
\langle w_n\,\1_{\cB_\rho}, K*w_n \big\rangle=\langle w_n, T w_n \big\rangle_{\mathrm L^q, \mathrm L^{q'}}\,,
\]
where $T$ is the operator from $\mathrm L^q(\R^d)$ to $\mathrm L^{q'}(\R^d)$ with kernel $T(x,y)=\1_{\cB_\rho}(x)\,K(x-y)$. The operator $T:\mathrm L^q(\R^d,\C^N)\to\mathrm L^{q'}(\R^d,\C^N)$ is bounded. We claim that $T$ is a compact operator. In the Dirac case~\ref{item:2-key_lemma}, this comes from the fact that $K*w_n\in\mathrm W^{1, q}$ with $\|K*w_n\|_{\mathrm W^{1, q}}(\R^d,\C^N)\le C\,\|w_n\|_q$ together with the Rellich-Kondrachov compact embedding theorem. In the case~\ref{item:1-key_lemma}, where $K\in\mathrm L^r(\R^d)$, setting $\tau_h\,f(x) :=f(x-h)$, we have
\[
\big\|\tau_h(K*w)-K*w\big\|_{\mathrm L^{q'}(B_\rho)}=\big\|\(\tau_h\,K-K\)*w\big\|_{q'}\le\|\tau_h\,K-K\|_r\,\|w\|_q\,.
\]
Since $K\in\mathrm L^r(\R^d)$, we have $\|\tau_h\,K-K\|_r \to 0$ as $h \to 0$, and we conclude with the Kolmogorov-Riesz-Fréchet theorem (see for instance~\cite[Theorem~4.26]{Brezis_2011}).

As a consequence, $(T w_n)_{n\in\N}$ converges strongly to $T w$ in $\mathrm L^{q'}(\R^d,\C^N)$. In particular, we obtain that
\[
\lim_{n \to+\infty}\big\langle w_n\,\1_{\cB_\rho}, K*w_n \big\rangle=\big\langle w\,\1_{\cB_\rho}, K*w \big\rangle.
\]
Gathering the two inequalities gives
\begin{align*}
&\left| \big\langle w_n, K* w_n \big\rangle-\big\langle w, K* w \big\rangle \right|\\
&\;\le\left| \big\langle w_n\,\1_{\cB_\rho}, K* w_n \big\rangle-\big\langle w\1_{\cB_\rho} , K* w \big\rangle \right|+\left| \big\langle w_n\,\1_{\cB_\rho}^c, K* w_n \big\rangle \right|+\left| \big\langle w\,\1_{\cB_\rho}^c, K* w \big\rangle \right|\\
&\;\le\left| \big\langle w_n\,\1_{\cB_\rho}, K* w_n \big\rangle-\big\langle w\1_{\cB_\rho} , K* w \big\rangle \right|+2\,C\,\varepsilon.
\end{align*}
Sending first $n$ to $+\infty$, and then $\varepsilon$ to $0$ shows that $\langle w, K* w \rangle=J(1)$. Finally, since $\|w\|_q\le 1$, by~\eqref{eq:scaling-J} we deduce that $\|w\|_q=1$. This proves that $(w_n)_{n\in\N}$ converges strongly to $w$ in $\mathrm L^q(\R^d,\C^N)$ and that $w$ is an optimizer.
\end{proof}

It is an open question to decide whether $T$ is compact or not under the condition~\eqref{eq:weaker_condition}.

\begin{proof}[Proof of Theorem~\ref{th:w}.]
In the setting of Theorem~\ref{th:w}, we take $K = R_0(\lambda)$. We have $J(1) > 0$ in this case, as noticed in Remark~\ref{rmk:J(1)>0}, so by Lemma~\ref{lem:key_lemma}, the problem~\eqref{eq:Nbis} admits maximizers. By standard arguments, optimizers satisfy the Euler-Lagrange equation~\eqref{eq:EL_for_w}.

The fact that $\lambda \mapsto \cN(\lambda,p) $ is strictly increasing comes from the fact that $\lambda \mapsto R_0(\lambda)$ is operator strictly increasing: for instance, we have $\partial_\lambda R_0(\lambda)=(R_0(\lambda))^2 > 0$. Let us prove the continuity. Let $-\,m < \lambda' < \lambda < m$, and let $w_\lambda$ be the optimizer for $\cN(\lambda,p)$. Using that $\cN(\cdot)$ is strictly increasing and the resolvent identity
$$R_0(\lambda') = R_0(\lambda) - (\lambda - \lambda')\,R_0(\lambda')\,R_0(\lambda)\,,$$
we obtain
\[
 0 < \cN(\lambda, p) - \cN(\lambda', p) \le (\lambda - \lambda') \,\big\langle w_\lambda, R_0(\lambda')\,R_0(\lambda)\,w_\lambda \big\rangle\,.
\]
Using that $R_0$ is a bounded operator from $\mathrm L^q(\R^d)$ to $\mathrm L^{q'}(\R^d)$, and from $\mathrm L^{q'}(\R^d)$ into itself, with uniform bounds in a neighborhood of $\lambda$, we deduce that there is $C > 0$ so that
\[
 \left| \big\langle w_\lambda, R_0(\lambda')\,R_0(\lambda)\,w_\lambda \big\rangle \right| \le C\,\| w_\lambda \|^2_q = C\,,
\]
This proves that $\cN(\cdot,p)$ is locally Lipschitz, hence continuous.

We now prove the bounds on $\lim_{\lambda \to \pm m} \cN(\lambda, p)$. To prove that $\lim_{\lambda \to m} \cN(\lambda, p) = + \infty$, we go back to~\eqref{eq:upper_component} and take a function $f=L^{-d/q}\,g(\cdot/L)$, where $g$ is an arbitrary test function that is normalized in $\mathrm L^q(\R^d)$. This gives
$$
\mathcal{N}(\lambda, p) \ge L^{-2\,d/q}\,(m+\lambda )\,\Big\langle{g(\cdot/L)},{\big(-\Delta +m^2 -\lambda^2\big)^{-1}\,g(\cdot/L)}\Big\rangle\,.
$$
We bound the resolvent as
$$
\big(-\Delta +m^2 -\lambda^2\big)^{-1} \ge \big(m^2-\lambda^2\big)^{-1}\left(1 + \big(m^2-\lambda^2\big)^{-1}\Delta\right)
$$
and change variables to obtain
$$
\mathcal{N}(\lambda, p) \ge L^{d\,(1 - \frac{2}{q})}\,(m-\lambda )^{-1}\left(\nrm{g}2^2 -L^{-2}\,\big(m^2-\lambda^2\big)^{-1}\,\nrm{\nabla g}2^2\right).
$$
Since $1-\frac2q=p$, we may take $L = (m-\lambda)^{-\alpha}$ for any $\alpha \in (1/2, p/d)$ and conclude that
$\lim_{\lambda \to m} \mathcal{N}(\lambda, p) = +\infty$.

Finally, to prove that $\lim_{\lambda \to -m} \cN(\lambda, p) > 0$, we claim that
\be{Existence:w}\mbox{\emph{there exists a function $w \in\mathrm L^2\cap\mathrm L^q(\R^d, \C^N)$ such that $\| w \|_q = 1$ and $P\,w = w$,}}
\ee
where $P := \1_{m < \Dirac < 2\,m}$ is the spectral projection of the free Dirac operator onto $(m, 2\,m)$. This would give
\[
 \cN(\lambda, p) \ge \langle w, R_0(\lambda)\,w \rangle = \left\langle w, \dfrac{P}{\Dirac - \lambda}\,w \right\rangle + \left\langle w, \dfrac{P^\perp}{\Dirac - \lambda}\,w \right\rangle.
\]
The second term is null since $P^\perp\,w = 0$. For the first term, we have $m < \Dirac < 2\,m$ on the range of $P$, and in particular $P\,(\Dirac - \lambda)^{-1}\,P \ge P\,(2\,m - \lambda)^{-1}\,P$, hence $\cN(\lambda, p) \ge (2\,m - \lambda)^{-1}\,\| w \|_2$. Taking $\lambda \to -m$ shows that $\lim_{\lambda \to -m} \cN(\lambda, p) \ge (3\,m)^{-1}\,\| w \|_2> 0$.

It remains to prove~\eqref{Existence:w}. Recall that $\Dirac = \cF M(k)\,\cF^*$, where $\cF$ denotes the Fourier transform and $M(k)$ is the $d \times d$ matrix $M(k) := \boldsymbol\alpha \cdot k + m\,\beta$, which satisfies $M(k) = M(k)^*$, $M(k)^2 = (| k |^2 + m^2)\,\bbI_d$, and $\sigma(M(k)) = \big\{ \pm\,(| k |^2 + m^2)^{1/2} \big\}$. 
Let \hbox{$v \mapsto v(k)$} be a smooth family of spinors from some open ball $\cB(k = 0, \varepsilon)$ to $\C^d$, with $0 < \varepsilon < m$, so that $M(k)\,v(k) = (| k |^2 + m^2)^{1/2}\,v(k)$. To construct such a local family of spinors, one can consider $v_0$ a normalized eigenfunction of $M(k=0)$, and set,
\[
    v(k) := \frac{P(k)\,v_0}{\| P(k)\,v_0 \|_2}\,, \quad P(k) := \1 (M(k) > 0)\,.
\]
Since $k \mapsto P(k)$ is smooth locally around $0$ ($P(k)$ can be written as a Cauchy integral $P(k) = (2\,\ri\,\pi)^{-1} \oint_{\sC} \big(z - M(k)\big)^{-1}\,\rd z$ with a contour enclosing $m$), so is $k \mapsto v(k)$.
Let also $\chi(k) : \R^d \to \R^+$ be a non null smooth compactly supported function, with $\chi(k) = 0$ for $| k | > \varepsilon$. We consider the function
\[
 w := \dfrac{\widetilde{w}}{\| \widetilde{w} \|_q} \quad \text{with} \quad \widetilde{w} := \cF \big( \chi(k)\,v(k) \big)\,.
\]
By construction, we have $\widetilde{w} \neq 0$, and since $\widetilde{w}$ has a Fourier transform which is smooth and compactly supported, it belongs to the Schwartz class $\cS(\R^d, \C^N)$. Finally, since on the support of $\chi$, we have $M(k)\,v(k) = (| k |^2 + m^2)^{1/2}\,v(k)$ with $m < (| k |^2 + m^2)^{1/2} < \sqrt{2}\,m^2$, we deduce that
\[
 P\,\widetilde{w} = \cF \big( \1_{m < M(k) < 2\,m}\,\chi(k)\,v(k) \big) = \cF \,\big(\chi(k)\,v(k) \big) = \widetilde{w}\,,
\]
which concludes the proof of~\eqref{Existence:w}.
\end{proof}

\subsection{Regularity of the solutions of the non-linear Dirac equation}\label{Sec:Regularity}

Under Condition~\eqref{eq:cond_p_ess_sa}, solutions of~\eqref{eq:def:KellerDirac} with $\Psi\in\mathrm L^2(\R^d,\C^N)$ are in $\Dom (\Dirac-V)=\mathrm H^1(\R^d,\C^N)$. Let us consider the other cases of Proposition~\ref{prop:selfadjoint}. If $d=1$ and $1<p\le2$, any optimal function for~\eqref{eq:Nbis} obtained in Theorem~\ref{th:w} gives rise to a solution $\Psi\in\mathrm W^{1,q}(\R,\C^2)$ of~\eqref{eq:def:KellerDirac} with $q=2\,p/(p+1)$. We conclude that $\Psi$ is continuous. If $p=d=2$ and $q=4/3$, the corresponding solution $\Psi$ of~\eqref{eq:def:KellerDirac} is in $\mathrm W^{1,q}(\R,\C^2)\hookrightarrow\mathrm H^{1/2}(\R,\C^2)$, hence $V\,|\Psi|^2=|\Psi|^{2\,p/(p-1)}$ is integrable and $\Psi\in\Dom(\Dirac-V)$ of the distinguished extension of Proposition~\ref{prop:selfadjoint} but we do not know whether $\Psi \in \mathrm H^1(\R^2,\C^2)$ or not. 

In dimension $d=1$, an explicit expression of the solutions of~\eqref{eq:def:KellerDirac} such that 
\[
\lim_{x\to\pm\infty}\Psi(x)=(0,0)^\top
\]
is given in Theorem~\ref{th:alphac_d=1}. In the case $p=d=2$, it is unclear how to obtain $\Psi\in\mathrm H^1(\R^d,\C^N)$ by general arguments, as pointed out in~\cite{MR3794033}. However, any solution to~\eqref{eq:def:KellerDirac} (and not only the ones found in Theorem~\ref{th:w}) have additional regularity properties under Condition~\eqref{eq:cond_p_ess_sa}.
\begin{proposition} \label{thm:regularity}
Let $\lambda\in [-\,m, m)$ and either $p\ge d$ if $d\ge3$, or $p > d$ in dimension $d=1$ and $2$. If $\Psi\in\mathrm H^1(\R^d,\C^N)$ solves~\eqref{eq:def:KellerDirac}, then $\Psi\in C^\infty(\R^d,\C^N)$.
\end{proposition}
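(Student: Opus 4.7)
The plan is to bootstrap regularity using the integral reformulation of the equation. Since $\lambda\in[-\,m,m)$ and $m>0$, the point $\mu=0$ lies in the resolvent set of $\Dirac$, so~\eqref{eq:def:KellerDirac} is equivalent to
\[
\Psi=R_0(0)\bigl[(\lambda+V)\,\Psi\bigr]\,,\quad V:=|\Psi|^{2/(p-1)}\,.
\]
The kernel of $R_0(0)=\Dirac^{-1}$, given explicitly in Remark~\ref{rmk:lemma_applies}, has a $|x|^{1-d}$ singularity at the origin and decays exponentially at infinity. Since $R_0(0)$ is an elliptic pseudo-differential operator of order $-1$, Hardy-Littlewood-Sobolev (or equivalently Sobolev embedding applied after $R_0(0):\mathrm L^r\to\mathrm W^{1,r}$) shows that $R_0(0)$ maps $\mathrm L^r(\R^d,\C^N)$ continuously into $\mathrm L^{r^*}(\R^d,\C^N)$ with $1/r^*=1/r-1/d$ when $1<r<d$, and into $\mathrm L^\infty(\R^d,\C^N)$ when $r>d$.

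The first step upgrades $\Psi\in\mathrm H^1$ to $\Psi\in\mathrm L^\infty$. By Sobolev, $\Psi\in\mathrm L^{r_0}$ with $r_0=2d/(d-2)$ if $d\ge3$, $r_0$ any finite exponent if $d=2$, and $r_0=\infty$ already if $d=1$. Assume by induction that $\Psi\in\mathrm L^{r_n}$. Then $|(\lambda+V)\,\Psi|\le|\lambda|\,|\Psi|+|\Psi|^{(p+1)/(p-1)}$ lies in $\mathrm L^{r_n(p-1)/(p+1)}$, so the mapping property of $R_0(0)$ yields $\Psi\in\mathrm L^{r_{n+1}}$ with
\[
\frac1{r_{n+1}}=\frac{p+1}{(p-1)\,r_n}-\frac1d
\]
as long as the right-hand side is positive (otherwise Sobolev gives $\Psi\in\mathrm L^\infty$ at once). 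The fixed point of this affine recursion is $1/r_\star=(p-1)/(2d)$, and the hypothesis on $p$ guarantees $1/r_0<1/r_\star$: for $d\ge3$ this reads $(d-2)/(2d)<(p-1)/(2d)$, \emph{i.e.}, $p>d-1$, which follows from $p\ge d$; for $d\in\{1,2\}$ we may take $r_0$ arbitrarily large since $p>d$. As the coefficient $(p+1)/(p-1)$ is strictly greater than $1$, the sequence $(1/r_n)$ strictly decreases away from the fixed point and enters the regime $r_n(p-1)/(p+1)>d$ after finitely many steps, giving $\Psi\in\mathrm L^\infty$.

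Once $\Psi\in\mathrm L^\infty$, the potential $V$ is bounded and we iterate a Schauder-type bootstrap on the linear equation $\Dirac\,\Psi=(\lambda+V)\,\Psi$. Since the right-hand side lies in $\mathrm L^\infty$, standard elliptic regularity for the first-order elliptic system $\Dirac$ (applied through $\Dirac^2=-\Delta+m^2$ and classical Calderon-Zygmund/Schauder estimates) gives $\Psi\in C^{0,\alpha}$ for all $\alpha\in(0,1)$; hence $V\in C^{0,\alpha}$, which lifts $\Psi$ to $C^{1,\alpha}$, and so on. The main technical subtlety, and the main obstacle, is that the nonlinearity $F(\Psi)=|\Psi|^{2/(p-1)}\,\Psi$ is $C^\infty$ on $\C^N\setminus\{0\}$ but only of finite regularity at the origin; however, $F$ vanishes to order $(p+1)/(p-1)>1$ at $0$, and this high order of vanishing compensates for the lack of smoothness of $F$ there, so that the composition $F\circ\Psi$ retains enough regularity at zeros of $\Psi$ to propagate the bootstrap through every order. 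Iterating therefore yields $\Psi\in C^{k,\alpha}$ for every $k$, and thus $\Psi\in C^\infty(\R^d,\C^N)$.
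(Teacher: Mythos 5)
Your proposal follows the same two-step route as the paper's proof. The first step, upgrading $\Psi\in\mathrm H^1$ to $\Psi\in\mathrm L^\infty$, is the paper's argument almost verbatim: the same affine recursion $1/r_{n+1}=\tfrac{p+1}{p-1}\,\tfrac1{r_n}-\tfrac1d$ on Lebesgue exponents, the same unstable fixed point $\tfrac{p-1}{2\,d}$, and the same check that the Sobolev exponent of $\mathrm H^1$ starts strictly below it; the paper phrases the gain of one derivative as $\Dirac\Psi\in\mathrm L^r\Rightarrow\Psi\in\mathrm W^{1,r}$ rather than through $R_0(0)$, which is equivalent. (A small point: to place the full right-hand side $\lambda\,\Psi+|\Psi|^{2/(p-1)}\Psi$ in $\mathrm L^{r_n(p-1)/(p+1)}$ you also need $\Psi\in\mathrm L^2$ and the interpolation $2\le r_n\,\tfrac{p-1}{p+1}\le r_n$, which the paper states explicitly.)

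The second step is where you diverge from the paper in an instructive way. The paper disposes of it with ``by bootstrapping again, we obtain $\Psi\in C^\infty$''; you correctly identify the obstruction this hides, namely that $F(z)=|z|^{2/(p-1)}z$ is not smooth at $z=0$. But your proposed resolution is not a proof and is false as stated: $F$ vanishes at the origin only to order $(p+1)/(p-1)$, which tends to $1$ as $p\to\infty$ and is in any case finite, so it cannot ``compensate through every order''. Concretely, take $p=5$, so $F(z)=|z|^{1/2}z$. If $\Psi$ had a simple zero at some $x_0$, then $|F(\Psi(x))|\sim|x-x_0|^{3/2}$ near $x_0$, so the right-hand side of~\eqref{eq:def:KellerDirac} is only $C^{1,1/2}$ there; elliptic regularity then stalls at $\Psi\in C^{2,1/2}$ and never reaches $C^\infty$. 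Closing this step requires showing that $\Psi$ does not vanish (true for the one-dimensional ground state by the phase-plane analysis of Section~\ref{Sec:Explicit1}, but not obvious for an arbitrary $\mathrm H^1$ solution, since excited states with finite-order zeros at the origin are expected in $d\ge2$), or else restricting the $C^\infty$ conclusion to the complement of the zero set of $\Psi$. To be fair, the paper's own proof has exactly the same gap; yours is simply more explicit about where it lies, while offering an incorrect reason why it should be harmless.
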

\begin{proof}
Let us first prove that $\Psi\in\mathrm L^\infty(\R^d,\C^N)$ with a usual bootstrap argument. If $\Psi\in\mathrm L^q(\R^d,\C^N)$, then $| \Psi |^\frac2{p-1}\,\Psi\in\mathrm L^{\frac{p-1}{p+1}\,q}(\R^d,\C^N)$. Also, if $q > 2\,\frac{p+1}{p-1}$, then $2 < \frac{p-1}{p+1}\,q < q$, so if $\Psi\in\mathrm L^2(\R^d,\C^N) \cap \mathrm L^q(\R^d,\C^N)$, then $\lambda\,\Psi+| \Psi |^\frac2{p-1}\,\Psi\in\mathrm L^{\frac{p-1}{p+1}\,q}(\R^d,\C^N)$. In particular, $\Dirac \Psi\in\mathrm L^{\frac{p-1}{p+1}\,q}(\R^d,\C^N)$, hence $\Psi\in\mathrm W^{1, \frac{p-1}{p+1}\,q}(\R^d,\C^N) \hookrightarrow \mathrm L^{\tilde q}(\R^d,\C^N)$, with $\tilde q=+\infty$ if $\frac{p-1}{p+1}\,q > d$ and
\[
\frac1{\tilde q}=\frac{p+1}{p-1}\,\frac1q-\frac1d
\]
otherwise. As a first step of an iteration scheme, we proved that if $\Psi\in\mathrm L^q(\R^d,\C^N)$, then $\Psi\in\mathrm L^{\tilde q}(\R^d,\C^N)$ as well. For the initialization, we note that $\mathrm H^1(\R^d,\C^N) \hookrightarrow \mathrm L^q(\R^d,\C^N)$ for all $q$ such that $2\le q\le \frac{2\,d}{d-2}=:2^*$ if $d\ge3$ and $2\le q <+\infty=:2^*$ if $d=2$. Hence with $2\,\frac{p+1}{p-1}<2^*$, there is $q_0 > 2\,\frac{p+1}{p-1}$ so that $\Psi\in\mathrm L^2(\R^d,\C^N)\cap \mathrm L^{q_0}(\R^d,\C^N)$. The map $F : x \mapsto \frac{p+1}{p-1}\,x-\frac1d$ satisfies $F(x) < x$ for $x\in [0, x^*]$ with $x^*=\frac{p-1}{2\,d} < \frac12\,\frac{p-1}{p+1}$. We easily deduce that there is $n\in\N$ so that $F^{(n)}(\frac1{q_0}) < 0$, which proves $\Psi\in\mathrm L^\infty(\R^d,\C^N)$ as wanted.

Since $\Dirac \Psi\in\mathrm L^\infty(\R^d,\C^N)$, we have $\Psi\in\mathrm W^{1,\infty}(\R^d,\C^N) \hookrightarrow C^{0, \alpha}(\R^d,\C^N)$ for all $0\le \alpha < 1$, by bootstrapping again, we obtain $\Psi\in C^\infty(\R^d,\C^N)$.
\end{proof}

\section{Lieb-Thirring inequality}\label{Sec:LT}

This section contains the proof of Theorem~\ref{thm:LT-bound}.
We closely follow the original proof by Lieb and Thirring~\cite{LieThi-75,Lieb-Thirring76} (see also~\cite{LieSei-10}). This is possible since we are assuming $V \ge 0$. In the general case where $V$ has no sign, some results can be found in the works of Cuenin~\cite{cuenin2017eigenvalue}, and Frank-Simon~\cite{MR2785827}, where the authors control the Riesz-mean
$$
\sum_k {\rm dist}\big( \lambda_k, \sigma(\Dirac - V)\big)^\gamma\,,
$$
that is, the distance to the whole spectrum. Actually, without assuming a sign on $V$, one cannot expect to control the sums in~\eqref{eq:LT_RieszMean}, since, for $V\le 0$ small, the eigenvalues of \hbox{$\Dirac - V$} emerge from the bottom essential spectrum (hence have a distance of order \hbox{$2\,m>0$} to the upper essential spectrum).
Here, since $V$ is nonnegative, the eigenvalues emerge from the upper essential spectrum as the strength of the potential increases. 

\begin{proof}[Proof of Theorem~\ref{thm:LT-bound}]
It is sufficient to prove the result for $V$ bounded and compactly supported. By the \emph{Birman-Schwinger principle} introduced in Section~\ref{Sec:Birman-SchwingerOperator}, we know that $\lambda$ is an eigenvalue for $\Dirac - V$ acting on $\mathbb C^N$ valued spinors if and only if $1$ is an eigenvalue of $K_V(\lambda)$ defined by~\eqref{eq:def:KV}: see Proposition~\ref{Prop:Birman-Schwinger}. We also proved that $\lambda \mapsto K_V(\lambda)$ is operator increasing. In particular, if we set
\[
N_e(V) := \text{number of eigenvalues of $\Dirac - V$ in $[-\,m, m - e]$}
\]
and
\[
B_e(V) := \text{number of eigenvalues of $K_V(m - e)$ which are greater or equal than $1$}\,,
\]
then we have $N_e(V) \le B_e(V)$. We have equality if the highest eigenvalues of $K_V(\lambda)$ gets strictly smaller than $1$ as $\lambda \to -\,m$. This happens for instance if $\| V \|_p \le \alpha_*(p)$. 

With $R_0$ defined by~\eqref{eq:def:Rz}, using the operator inequality
$$
R_0(\lambda) \le \mathbbm{1}_{\C^N}\(\sqrt{-\Delta + m^2}-\lambda\)^{-1}\,,
$$
we can estimate $B_e(V)$ by $N\,B_e^{\rm pr}(V)$, where $ B_e^{\rm pr}(V)$ is the number of eigenvalues above~$1$ of the pseudo-relativistic Birman-Schwinger operator 
$$
 K^{\rm pr}_V (m-e):= \sqrt{V} \(\sqrt{-\Delta + m^2}-m + e\)^{-1} \sqrt{V}\,.
$$
In addition, with the definition
\[
 N_e^{\rm pr}(V) := \text{number of eigenvalues of $\left( \sqrt{-\Delta + m^2} - m \right) - V$ less or equal than $-\,e$}\,,
\]
the usual Birman-Schwinger principle shows that $B_e^{\rm pr}(V) = N_e^{\rm pr}(V)$. To sum up, we have
\begin{equation} \label{eq:Ne_vs_Nepr}
 N_e(V) \le B_e(V) \le N \, B_e^{\rm pr}(V) = N \, N_e^{\rm pr}(V)\,.
\end{equation}
The operator $\sqrt{-\Delta + m^2} - m$ is sometimes called the Chandrasekhar (or pseudo-relativis\-tic) kinetic energy operator. It is a positive operator, $\sqrt{-\Delta + m^2} - m - V$ is bounded from below, and the min-max formula applies. We can now repeat the usual arguments of Lieb and Thirring for the pseudo-relativistic operator.

First, for $\gamma > 0$, the cake-layer representation gives
\be{eq:layer-cake}
 \sum_{k\ge1} e_k^\gamma = \gamma \int_{0}^{2\,m} e^{\gamma-1}\,N_e(V)\,\rd e \le \gamma\,N \, \int_{0}^{2\,m} e^{\gamma-1}\,B_e^{\rm pr}(V)\,\rd e\,.
\ee
Note that for the pseudo-relativistic model, if $-\,e_1^{\rm pr} \le -\,e_2^{\rm pr} \le \cdots < 0$ are the negative eigenvalues of $\left( \sqrt{-\Delta + m^2} - m \right) - V$, we have
\[
 \sum_{k\ge1} (e_k^{\rm pr})^\gamma = \gamma \int_{0}^{\infty} e^{\gamma-1}\,N_e^{\rm pr}(V)\,\rd e = \gamma \, \int_{0}^{\infty} e^{\gamma-1}\,B_e^{\rm pr}(V)\,\rd e\,,
\]
and the integral runs over $e \in \R^+$ instead of $e \in (0, 2\,m)$. Actually, the previous two inequalities together with~\eqref{eq:Ne_vs_Nepr} show that
\[
 \sum_{k\ge1} e_k^\gamma \le N \sum_{k\ge1} (e_k^{\rm pr})^\gamma\,.
\]
In other words, the Riesz-mean of the eigenvalues increases when one replaces the Dirac operator by the pseudo--relatisvistic one (up to the $N$ factor). Lieb-Thirring inequalities for the last sum have been derived by Daubechies in~\cite{Daubechies_83} (and used, \emph{e.g.}, in~\cite{Lieb2001}). In what follows, we derive another inequality specifically for the Dirac operator. We use in particular the fact that the integral in~\eqref{eq:layer-cake} only runs for $e$ in the bounded interval $(0, 2\,m)$ instead \hbox{of~$\R^+$}.

\medskip\noindent$\bullet$ \emph{Bound for $B_e^{\rm pr}(V)$}. Assume $V \in\mathrm L^p(\R^d)$ with $d < p$. The number of eigenvalues above $1$ of $K_V^{\rm pr}(m - e)$ is bounded from above by $\| K_V^{\rm pr}(m-e) \|_{\fS^p}^p$. We estimate this norm using the Kato-Simon-Seiler inequality (see~\cite[Theorem 4.2]{SimonTraceIdeals}). Using a decomposition similar to the one in the proof of Lemma~\ref{lem:KV}, we obtain
\[
 B_e^{\rm pr}(V) \le \| K_V^{\rm pr}(m - e)\,\|_{\fS_{p}}^p \le C_p \left\| g_{m, e} \right\|_{p}^p\,\| V \|_p^p\,,
\]
where we introduced the function
\[
 g_{m, e}(k) := \left( \sqrt{k^2 + m^2} - (m - e) \right)^{-1}\,.
\]
Note that $g_{m,e} \in \mathrm L^p(\R^d)$ since $p>d$, and
\[
 \| g_{m,e} \|_{p}^p = \int_{\R^d} \dfrac{\rd k}{\left( \sqrt{k^2 + m^2} - m + e \right)^p}
 = | \SS^{d-1} | \int_0^\infty \dfrac{r^{d-1}\,\rd r}{\left( \sqrt{r^2 + m^2} - m + e \right)^p}\,.
\]
To estimate this norm, we make the change of variable $X = \frac{1}{e}\,\big( \sqrt{r^2 + m^2} - m\big)$, so that $r = \sqrt{(e\,X + m)^2 - m^2} = \sqrt{e\,X\,(e\,X + 2\,m)}$. We obtain
\[
 \| g_{m,e} \|_{p}^p = \frac{| \SS^{d-1} |}{e^{p - \frac{d}{2}}} \int_0^\infty \dfrac{ \big( X (e\,X + 2\,m) \big)^{\frac{d}{2} - 1} (e\,X + m)\,\rd X}{\left( X+1 \right)^p}\,.
\]
The last integral is an increasing function of $e$ (and has a finite value as $e \to 0$ by the monotone convergence theorem). Since $e \in (0, 2\,m)$, we can bound this integral by its value at $e = 2\,m$. We deduce that there is a constant $C_{p,d}$ such that 
\be{eq:Ne-bound}
 B_e^{\rm pr} (V) \le C_{p,d}\,\nrm{V}p^p\,\dfrac{m ^{d/2}}{e^{p-d/2}}\,.
\ee

\medskip\noindent$\bullet$ \emph{Proof of the Lieb--Thirring estimate}. We now follow \cite{LieThi-75, Lieb-Thirring76, LieSei-10}. The min--max principle for the pseudo-relativistic operator shows that its eigenvalues are decreasing when $V$ increases. Since $V \le [V-e/2]_+ +e/2$, we may bound
\begin{multline*}
 B_e^{\rm pr}(V) = N_{e}^{\rm pr} (V) \le N_{e}^{\rm pr}\big([V-e/2]_+ + e/2\big)\\ 
 = N_{e/2}^{\rm pr}\big([V-e/2]_+\big)
 = B_{e/2}^{\rm pr}\big([V - e/2]_+\big)\,.
\end{multline*} 
For any $p > d$, we can apply the bound in~\eqref{eq:Ne-bound} to estimate $B_{e/2}^{\rm pr}\big([V-e_2]_+\big)$. Inserting this estimate into~\eqref{eq:layer-cake}, we get
\begin{multline*}
 \sum_{k\ge1} e_k^\gamma 
 \le N\,C_{p,d}\,\gamma\, m^{\frac d2} \int_{0}^{2\,m }(e/2)^{\gamma-1+\frac d2 - p}\,\big\|[V-e/2]_+\big\|_p^p\,\rd e \\
 = C_{\gamma, d,p} \, m^{\frac d2} \int_{\R^d} \int_{0}^{2\,m }e^{\gamma-1+\frac d2 - p}\,\big[V(x)-e/2\big]_+^p\,\rd e \,\rd x \\
 \le C_{\gamma, d,p} \, m^{\frac d2} \int_{\R^d} V^{\gamma + \frac d2} (x) \int_0^{s^*(x)} s^{\gamma -1 + \frac d2 -p}\,(1-s)^p_+\,\rd s \,\rd x\,,
 \end{multline*}
where $s^*(x) := \min\{m/V(x), 1\}$, with the convention that $s^*(x)= 1$ if $V(x)= 0$. The second integral converges whenever $p < \gamma + d/2$. 
We can simply use the bound $(1-s)^p \le 1$ in the last integral, and finally obtain
\begin{equation} \label{eq:bound_LT}
\sum_{k\ge1} e_k^\gamma \le L_{\gamma, d,p}\,m^\frac d2\int_{\R^d} V_m^{\gamma + \frac d2 - p}\,V^{p}\,\rd x\quad \text{with} \quad V_m := \min \left\{ m, V \right\}\,.
\end{equation}
This inequality is valid for all $d < p < \gamma + d/2$. Note that $C_{\gamma, d, p}$ stays bounded in the limit as $p \to \gamma + d/2$, so a similar inequality also holds if $p = \gamma + d/2$. \end{proof}
\begin{remark} The result of Theorem~\ref{thm:LT-bound} can be extended to the case of a potential $V \in \mathrm L^p(\R^d, \R^+) + \mathrm L^{\gamma + d/2}(\R^d, \R^+)$ by noticing that the right-hand side of~\eqref{eq:bound_LT} is continuous for $V$ in this space.
\end{remark}

\section{Explicit computations}\label{Sec:Explicit}

\subsection{The case \texorpdfstring{$d=1$}{d=1}: proof of \texorpdfstring{Theorem~\ref{th:alphac_d=1}}{Theorem~1.2}}\label{Sec:Explicit1}

In this section, we prove the uniqueness and the symmetry up to translations of the solution of the nonlinear Dirac equation~\eqref{eq:def:KellerDirac}. We also compute the map $\alpha_D(\lambda,p)$.

\begin{proof}[Proof of Theorem~\ref{th:alphac_d=1}] In the one-dimensional case, Equation~\eqref{eq:def:KellerDirac} can be rewritten for the components of $\Psi=: (\varphi, \chi)^\top$ as
\be{eq:system-1-d}
\begin{cases}
\varphi'=-\,\Big(\lambda+m+\(|\chi|^2+|\varphi|^2\)^\frac1{p-1}\Big)\,\chi\,,\\
\chi'=\Big(\lambda-m+\(|\chi|^2+|\varphi|^2\)^\frac1{p-1}\Big)\,\varphi\,.
\end{cases}
\ee
The corresponding potential is $V=\(|\chi|^2+|\varphi|^2\)^\frac1{p-1}$. This system conserves
\[
\begin{split}
H(\varphi,\chi)&:=m\(|\chi|^2-|\varphi|^2\)+\lambda \(|\chi|^2+|\varphi|^2\)+\tfrac{p-1}p\,\(|\chi|^2+|\varphi|^2\)^\frac p{p-1}\,,\\
G(\varphi,\chi)&:=\bar \chi\,\varphi-\bar \varphi\,\chi\,.
\end{split}
\]
Since we are looking for solutions vanishing at $\pm\infty$, they satisfy $H\big(\varphi(x),\chi(x)\big)=0$, $ G(\varphi(x), \chi(x))=0$ for all $x\in\R$. This second condition shows that solutions can be chosen real valued. For real valued variables in the $(\varphi,\chi)$-plane, the level set $H(\varphi,\chi)=0$ has the shape of an infinity sign. Among real valued functions, uniqueness up to translations follows from the phase plane analysis. We can choose the unique solution with $\chi(0)=0$, $\varphi(0) > 0$, given. For this solution, $\varphi$ is even and $\chi$ is odd and positive on $\R^+$. Hence symmetry and uniqueness, up to translations and multiplication by a phase, are granted by elementary considerations. Next, we have
\[
V'=\tfrac1{p-1}\,\frac{(\chi^2+\varphi^2)'}{( \chi^2+\varphi^2)^\frac p{p-1}}\quad \mbox{with}\quad \(\chi^2+\varphi^2\)'=-\,4\,m\,\chi\,\varphi\,,
\]
which proves that $V$ is increasing in the quadrant $\{\chi<0,\varphi>0\}$ and decreasing in the quadrant $\{\chi>0,\varphi>0\}$. Hence $V$ is even and decreasing on $\R^+$, while on $\R^+$ both $\chi$ and $\varphi$ are positive valued.

\medskip Now let us compute $\nrm Vp$. It is enough to do the computation on $\R^+$. First, the equation $H(\varphi,\chi)=0$ can be rewritten as
\[
2\,m\,\varphi^2=(m+\lambda)\,V^{p-1}+\tfrac{p-1}p\,V^p\,,
\]
and so
\[
\varphi=\sqrt{\tfrac1{2\,m}\,V^{p-1}\(m+\lambda+\tfrac{p-1}p\,V\)}\,.
\]
Next, from the equation $V^{p-1}=\chi^2+\varphi^2$, we deduce that
\[
\chi=\sqrt{V^{p-1}-\varphi^2}=\sqrt{\tfrac1{2\,m}\,V^{p-1}\(m-\lambda-\tfrac{p-1}p\,V\)}\,.
\]
Finally, we have
\[
(p-1)\,V^{p-2}\,V'=\(V^{p-1}\)'=\(\chi^2+\varphi^2\)'=-\,4\,m\,\chi\,\varphi\,.
\]
Collecting the three last equalities shows that $V$ solves the autonomous differential equation
\[\label{eq:ode_V}
V'=-\,\tfrac2{p-1}\,V\,\sqrt{\(m-\lambda-\tfrac{p-1}p\,V\)\(m+\lambda+\tfrac{p-1}p\,V\)}\,.
\]
At $x=0$, we have $V'(0)=0$, which implies
\[
V(0)=\tfrac p{p-1}\,(m-\lambda)\,.
\]

\medskip\noindent\emph{\textbf{$\bullet$ Subcritical regime $\lambda >-\,m$}}.
The function
\[
Z(x) :=\tfrac{p-1}{p\,(m+\lambda)}\,V \big( \tfrac{p-1}{2\,(m+\lambda)}\,x \big)
\]
satisfies
\be{eq:ode_z}
Z'=-\,Z\,\sqrt{(z_0-Z)\,(1+Z)}\,,\quad Z(0)=z_0=\tfrac{m-\lambda}{m+\lambda}\,.
\ee
One can directly check that the solution of~\eqref{eq:ode_z} is
\[
Z(x)=\dfrac{2\,z_0}{(1+z_0) \cosh\(\sqrt{z_0}\,x\)+1-z_0}\,.
\]
This gives~\eqref{eq:explicit_V_noncritical}. The $\mathrm L^p(\R)$ norm of $V$ is computed as
\[
\nrm Vp^p=\frac{p^p\,(m+\lambda)^{p-1}}{2\,(p-1)^{p-1}}\,\nrm Zp^p\,.
\]
Using that $Z$ is even, monotone decreasing on $\R^+$, with the change of variable $z=Z(x)$ and $t=z/z_0$, we obtain, using~\eqref{eq:ode_z},
\begin{align*}
\|Z\|_{p}^p&=2\int_0^{+\infty} Z^p(x)\,\rd x\\
&=2\int_0^{z_0} \dfrac{z^{p-1}}{\sqrt{(z_0-z)\,(1+z)}}\,\rd z
=2\,z_0^{p-\frac12}\int_0^1 \frac{t^{p-1}}{\sqrt{(1-t)\,\big(1-(-z_0)\,t\big)}}\,\rd t\\
&=2\,z_0^{p-\frac12}\,B\(\tfrac12,p\)\,_2F_1\(\tfrac12,p;p+\tfrac12;-z_0\)\,.
\end{align*}
See~\cite[15.3.1 p.~558]{zbMATH03863589} for the last equality. This completes the computation of $\alpha_D(\lambda,p)$. By taking the limit as $p\to1_+$, we obtain $\alpha_D(\lambda,1)=\arccos(\lambda/m)$.

\medskip\noindent\emph{\textbf{$\bullet$ Critical case $\lambda=-\,m$}}. The function
\[
Z(x) :=\tfrac{p-1}{2\,m\,p}\,V\big(\tfrac{p-1}{2\,m}\,x\big)
\]
solves
\[
Z'=-\,2\,Z^{3/2}\,\sqrt{1-Z}\,,\quad Z(0)=1
\]
on $\R^+$. The solution is
\[
\forall\,x\in\R\,,\quad Z(x)=\dfrac1{1+x^2}\,.
\]
This gives~\eqref{eq:explicit_V_critical}, and the expression of $\alpha_\star(p)$ follows from
\[
\nrm Vp^p=\frac{p^p\,(2\,m)^{p-1}}{(p-1)^{p-1}}\,\nrm Zp^p
\]
with
\[
\|Z\|_p^p=\int_{\R}\dfrac{\rd x}{\(1+x^2\)^p}=B\(\tfrac12,p-\tfrac12\)
\]
according to~\cite[8.380.3 p.~917]{MR3307944}. This concludes the proof of Theorem~\ref{th:alphac_d=1}.
\end{proof}
Notice that $\lim_{z_0\to+\infty}\sqrt{z_0}\,B\(\tfrac12,p\)\,_2F_1\(\tfrac12,p;p+\tfrac12;-z_0\)=B\(\tfrac12,p-\tfrac12\)$, so that $\lim_{\lambda\to(-1)_+}\alpha_D(\lambda,p)=\alpha_\star(p)$.

\subsection{The radial case in dimension \texorpdfstring{$d=2$}2}\label{Sec:Explicit2}

We now provide some numerical simulations to obtain upper bounds for the maps $\Lambda_D(\alpha, p)$.

First, we restrict the minimization problem~\eqref{eq:min_vap_Dirac} to radial potentials, that is, we compute
\[
\Lambda_D^{\rm rad}(\alpha,p):=\inf\Big\{\lambda_D(V)\,:\,V\in\mathrm L^p(\R^d,\R^+),\;\mbox{$V$ radial and}\;\nrm Vp=\alpha\Big\}\,.
\]
Below in Appendix~\ref{appendix:radial}, we provide some numerical evidences that the optimal potentials are radial. We abusively write $V(x)=V(r)$ with $r=|x|$, $x\in\R^2$, use polar coordinates $(x,y)=(r\cos \theta, r \sin \theta)$, and write
\[
\partial_x=\cos\theta\,\partial_r-\frac1r\,\sin\theta\,\partial_\theta\quad \mbox{and}\quad \partial_y=\sin\theta\,\partial_r+\frac1r\,\cos\theta\,\partial_\theta\,.
\]
In these coordinates, the Dirac operator becomes
\[
\Dirac=\begin{pmatrix}
m&-\,\ri\,\partial_x-\partial_y\\
-\,\ri\,\partial_x+\partial_y&-\,m
\end{pmatrix}=
\begin{pmatrix}
m&\re^{-\,\ri\,\theta} \(-\,\ri\,\partial_r-\frac1r\,\partial_\theta\)\\
\re^{ \ri\,\theta} \(-\,\ri\,\partial_r+\frac1r\,\partial_\theta\)&-\,m
\end{pmatrix}.
\]
This suggests to decompose a spinor $\Psi$ in Fourier modes with the convention
\[
\Psi(r, \theta)=\sum_{n\in\Z} \begin{pmatrix}
\varphi_n(r)\,\re^{\ri\,n\,\theta}\\ \ri\,\chi_n(r)\,\re^{ \ri\,(n+1)\,\theta}
\end{pmatrix}\,.
\]
If $\Phi :=(\Dirac-V)\,\Psi$ with corresponding Fourier modes $\big((\widetilde{\varphi_n}, \widetilde{\chi_n})^\top\big)_{n\in\Z}$, then we have
\[
\begin{pmatrix}
\widetilde{\varphi_n}\\ \widetilde{\chi_n}
\end{pmatrix}
=(\Dirac^{(n)}-V) \begin{pmatrix} \varphi_n\\ \chi_n \end{pmatrix}\quad \mbox{with}\quad \Dirac^{(n)}=\begin{pmatrix} m&\partial_r+\frac{n+1}{r}\\-\,\partial_r+\frac{n}{r}&-\,m
\end{pmatrix}\,.
\]
The operator $\Dirac^{(n)}$ is self-adjoint in the Hilbert space $\mathrm L^2\big(\R^+\times (0, 2\pi), r\,\rd r\,\rd \theta\big)$ because $(\partial_r)^*=-\,\partial_r-\frac1r$. Let $\lambda_D^{(n)}(V)$ denote the lowest eigenvalue of $\Dirac^{(n)}-V$ in the gap $(-\,m, m)$, and let
\[
\Lambda_D^{\rad, (n)}(\alpha,p):=\inf\Big\{\lambda_D^{(n)}(V)\,:\,V\in\mathrm L^p(\R^2,\R^+), \; \mbox{$V$ radial and}\;\nrm Vp=\alpha\Big\}
\]
and
\[
\Lambda_D^{\rad}(\alpha, p) :=\inf_{n\in\Z} \Lambda_D^{\rad, (n)}(\alpha, p)\,.
\]
We have the estimates
\be{RadialEstimates:d=2}
\Lambda_D(\alpha, p)\le \Lambda_D^{\rad}(\alpha, p)\le \Lambda_D^{\rad, (0)}(\alpha, p)\,.
\ee
A wavefunction $\Psi(r, \theta)=\big(\varphi(r)\,\re^{\ri\,n\,\theta}, \ri\,\chi(r)\,\re^{ \ri\,(n+1)\,\theta}\big)^\top$ solves the non-linear Dirac equation~\eqref{eq:def:KellerDirac} if and only if
\begin{equation}\label{eq:NLD_2d}
\begin{cases}
\varphi'-\frac nr\,\varphi=-\,\Big(\lambda+m+\(|\chi|^2+|\varphi|^2\)^\frac1{p-1}\Big)\,\chi\,,\\
\chi'+\frac{n+1}r\,\chi=\Big(\lambda-m+\(|\chi|^2+|\varphi|^2\)^\frac1{p-1}\Big)\,\varphi\,.
\end{cases}
\end{equation}
This system with $n=0$ is studied by W.~Borrelli in~\cite{MR3705703}. It is an open question to decide whether $\Lambda_D^{\rad}(\alpha, p)$ is attained by $
\Lambda_D^{\rad, (n)}(\alpha, p)$ with $n=0$ or not, and if equality holds in~\eqref{RadialEstimates:d=2} so that $\Lambda_D(\alpha, p)=\Lambda_D^{\rad, (0)}(\alpha, p)$. See Fig.~\ref{fig:alphac_d2} for some numerical results.
\begin{figure}[ht]
\begin{subfigure}{0.55\textwidth}
\includegraphics[width=1\textwidth]{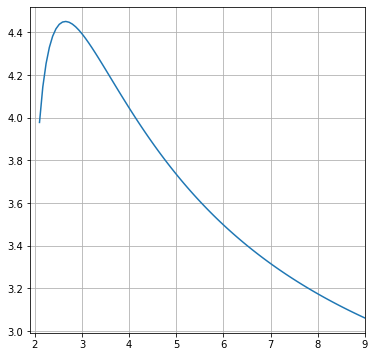}
\end{subfigure}
\begin{subfigure}{0.4\textwidth}
\includegraphics[width=1\textwidth]{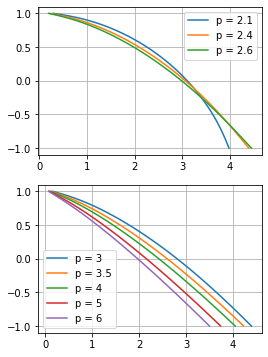}
\end{subfigure}
\caption{Radial case with $d=2$ and $m=1$. (Left) The function $p\mapsto\alpha_\star^{\rad, (n=0)}(p)$ is an upper bound for $\alpha_\star(p)$ and reaches its maximum for $p \approx 2.66$. (Right) The maps $\alpha \mapsto \Lambda_D^{\rad, (n=0)}(\alpha, p)$ for values of $p$ corresponding either to $p<2.66$ (top) or $p>2.66$ (bottom). Numerically the case $n=-1$ gives worse estimates.}
\label{fig:alphac_d2}
\end{figure}

\subsection{The radial case in dimension \texorpdfstring{$d=3$}{d=3}}\label{Sec:Explicit3}

As in the two dimensional case, we restrict the minimization problem~\eqref{eq:min_vap_Dirac} to radially symmetric decreasing potentials. The corresponding Dirac operator decomposes as a direct sum in eigenspaces of the \emph{spin-orbit operator}
\[
K=\beta\,(2\,S \cdot L+1)=\beta\,(J^2-\mathrm L^2+1/4)\,,\quad\operatorname{spec}(K)=\pm 1, \pm 2, \cdots
\]
and the \emph{total angular momentum} in the $z$-direction $J_3$, with $\operatorname{spec}(J_3)=\frac12 \{1,2,3, \cdots \}$. See~\cite[Section~4.6.4]{MR1219537} for details. For any $\kappa\in\operatorname{spec}(K)$, we introduce the operator
\[
\Dirac^{(\kappa)} :=\begin{pmatrix} m-V&\partial_r+\frac{\kappa+1}r\\-\,\partial_r+\frac{\kappa-1} r&-\,m-V\end{pmatrix}
\]
as a self-adjoint operator acting on $\mathrm L^2(\R^+, r^2\,\rd r)$. If$\lambda_D^{(\kappa)}(V)$ denotes the lowest eigenvalue of $\Dirac^{(\kappa)}-V$ in the gap $(-\,m, m)$, let us define
\[
\Lambda_D^{\rad, (\kappa)}(\alpha,p):=\inf\Big\{\lambda_D^{(\kappa)}(V)\,:\,V\in\mathrm L^p(\R^3,\R^+), \; \mbox{$V$ radial and}\;\nrm Vp=\alpha\Big\}\,.
\]
We have $\Lambda_D^{\rad}(\alpha, p)=\inf_{\kappa\in\Z \setminus \{ 0 \}} \Lambda_D^{\rad, (\kappa)}(\alpha, p)$ and
\[
\Lambda_D(\alpha, p)\le \Lambda_D^{\rad}(\alpha, p)\le \Lambda_D^{\rad, (\kappa=1)}(\alpha, p)\,.
\]
It is an open question to decide whether the above inequalities are in fact equalities or not. If $\kappa=1$, we look for an eigenstate of \hbox{$\Dirac-V$} in the Wakano form of~\cite{Wakano_1966}, that is,
\[
\Psi(r,\theta,\phi)=\begin{pmatrix} \varphi(r)\\ 0\\\ri\,\chi(r)\,\cos\theta\\ \ri\,e^{\kern 1pt\ri\kern 1pt\phi}\,\chi(r)\,\sin\theta \end{pmatrix}
\]
so that the nonlinear equation becomes
\be{eq:NLD_3d}
\begin{cases}
\varphi'=-\,\Big((\lambda+m)+\(|\chi|^2+|\varphi|^2\)^\frac1{p-1}\Big)\,\chi\,,\\
\chi'+\frac2r\,\chi=\Big((\lambda-m)+\(|\chi|^2+|\varphi|^2\)^\frac1{p-1}\Big)\,\varphi\,.
\end{cases}
\ee
System~\eqref{eq:NLD_3d} provides us with numerical upper estimates of $\Lambda_D(\alpha,p)$: see Fig.~\ref{fig:alphac_d3}.
\begin{figure}[ht]
\begin{subfigure}{0.55\textwidth}
\includegraphics[width=1\textwidth]{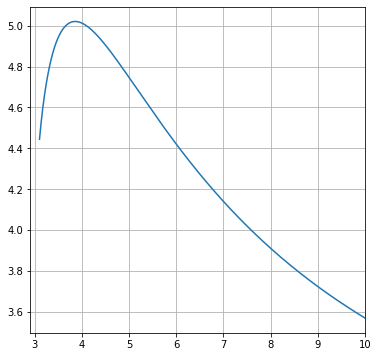}
\end{subfigure}
\begin{subfigure}{0.4\textwidth}
\includegraphics[width=1\textwidth]{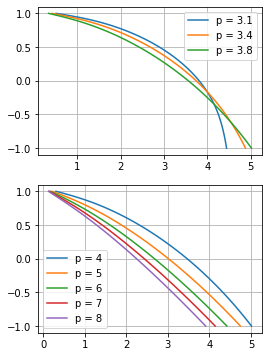}
\end{subfigure}
\caption{Radial case with $d=3$ and $m=1$. (Left) The function $p\mapsto\alpha_\star^{\rad, (\kappa = 1)}(p)$ reaches its maximum at $p\approx3.86$. (Right) The maps $\alpha \mapsto \Lambda_D^{\rad, (\kappa = 1)}(\alpha, p)$ for values of $p$ corresponding either to $p<3.86$ (top) or $p>3.86$ (bottom).}
\label{fig:alphac_d3}
\end{figure}

\subsection{An explicit bound in the radial case in dimensions \texorpdfstring{$d=2$ or $d=3$}{d=2 or d=3}}\label{Sec:Explicit-ODE}

Let us assume that $m=1$ and consider at $\lambda=-1$ (lower end of the gap) the system
\begin{equation} \label{SE}
\varphi'=-\,W\,\chi\,,\quad\chi'+\frac\delta r\,\chi=(W-2)\,\varphi, \,\quad W^{p-1}=|\varphi|^2+|\chi|^2\,.
\end{equation}
According to the previous section, the radial case $d=2$ corresponds to $\delta = 1$ (that is $n = 0$ in~\eqref{eq:NLD_2d}), and the radial case $d = 3$ to $\delta = 2$ (that is $\kappa = 1$ in~\eqref{eq:NLD_3d}). Writing $\chi(r) = f(r)\,\varphi(r) $, the equation becomes
\[
 \varphi' = -\,W\,f\,\varphi\,, \quad f' = W\,(f^2 + 1) - \frac{\delta}{r}\,f - 2\,, \quad W^{p-1} = | \varphi |^2\(1 + |f|^2\)\,.
\]
We now notice that this system admits a solution with $f(r) = r/\mu$ (so that all functions in the middle equality are constant functions). Explicitly, assuming $\delta < p - 1$, with $\mu:=\frac12\,(p-1 -\delta)$, we find a solution of~\eqref{SE} given by
\be{SEsoln}
\varphi_p(r)=\frac{(p\,\mu)^\frac{p-1}2\,\mu}{\(\mu^2+r^2\)^{p/2}}\,, \quad
\chi_p(r)=\frac{(p\,\mu)^\frac{p-1}2\,r}{\(\mu^2+r^2\)^{p/2}}
\quad\mbox{and}\quad W_p(r)=\frac{p\,\mu}{\mu^2+r^2}\,.
\ee
This solution is reminiscent of the solution of~\cite[Corollary~1.4]{MR4239839}. Up to a slight abuse of notations, we can consider $W_p$ as a function of $x\in\R^d$ with $r=|x|$.
\begin{lemma} For all $p \ge d\ge 2$ or $p > 1$ if $d = 1$, and all $\delta < p - 1$, the potential $W_p$ in~\eqref{SEsoln}, seen as a radial function in $\mathrm L^p(\R^d)$, satisfies
\begin{equation} \label{eq:upper_bound_Wp}
\nrm{W_p}p^p=p^p\,\pi^\frac d{2}\,\big(\tfrac2{p-1-\delta}\big)^{p - d}\,\tfrac{\Gamma\(p-\frac d2\)}{\Gamma(p)}
\end{equation}
so that in particular $\lim_{p\to d_+}\nrm{W_p}p= d\,\sqrt{\pi}\,\Big(\tfrac{\Gamma\(\frac d2\)}{\Gamma(d)}\Big)^{1/d}$.
\end{lemma}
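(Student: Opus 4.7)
The proof is a direct computation: one passes to spherical coordinates, performs a substitution that rescales out $\mu$, and recognises the resulting one-dimensional integral as a Beta integral.

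More precisely, writing $r=|x|$ and using $|\mathbb S^{d-1}|=\frac{2\,\pi^{d/2}}{\Gamma(d/2)}$, the plan is to compute
\[
\nrm{W_p}p^p
=|\mathbb S^{d-1}|\int_0^{+\infty}\(\frac{p\,\mu}{\mu^2+r^2}\)^p r^{d-1}\,\rd r
=(p\,\mu)^p\,|\mathbb S^{d-1}|\int_0^{+\infty}\frac{r^{d-1}}{(\mu^2+r^2)^p}\,\rd r\,.
\]
The change of variables $r=\mu\,t$ factors out the dependence on $\mu$ as $\mu^{d-p}$, i.e., as $\big(\tfrac{2}{p-1-\delta}\big)^{p-d}$, and leaves
\[
\int_0^{+\infty}\frac{t^{d-1}}{(1+t^2)^p}\,\rd t=\tfrac12\int_0^{+\infty}\frac{u^{d/2-1}}{(1+u)^p}\,\rd u=\tfrac12\,B\(\tfrac d2,p-\tfrac d2\)=\tfrac12\,\frac{\Gamma(d/2)\,\Gamma(p-d/2)}{\Gamma(p)}\,,
\]
where the first equality uses $u=t^2$ and the second is the standard Euler integral representation of the Beta function (valid since $p>d/2$, which is guaranteed by the hypothesis $p\ge d\ge 2$ together with the condition $\delta<p-1$ needed for $\mu>0$). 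Multiplying and using $|\mathbb S^{d-1}|\cdot \Gamma(d/2)/2=\pi^{d/2}$ yields~\eqref{eq:upper_bound_Wp}.

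For the limit $p\to d_+$, one simply substitutes $p=d$ in~\eqref{eq:upper_bound_Wp}: the factor $\big(\tfrac{2}{p-1-\delta}\big)^{p-d}$ tends to $1$, so
\[
\lim_{p\to d_+}\nrm{W_p}p^p=d^d\,\pi^{d/2}\,\frac{\Gamma(d/2)}{\Gamma(d)}\,,
\]
and taking the $d$-th root gives the stated value. There is no real obstacle here; the only points to verify are the integrability condition $p>d/2$ (automatic) and the positivity condition $\mu>0$ (built into the hypothesis $\delta<p-1$), so the proof reduces to the explicit Beta-integral calculation above.
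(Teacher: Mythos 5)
Your computation is correct and is exactly the direct Beta-integral calculation that the paper leaves implicit (the lemma is stated without proof there, as an elementary computation). The only point worth tightening is the limit $p\to d_+$: in the intended application one has $\delta=d-1$, so the base $\tfrac2{p-1-\delta}=\tfrac2{p-d}$ blows up rather than staying bounded, and the claim that $\big(\tfrac2{p-d}\big)^{p-d}\to1$ requires the (elementary) observation that $\eps^{-\eps}\to1$ as $\eps\to0_+$, not merely ``substituting $p=d$''.
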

\noindent Applied either with $d=2$ and $\delta=1$, or $d=3$ and $\delta=2$, the expression~\eqref{eq:upper_bound_Wp} gives an upper bound for $\alpha_*(p)$ in dimension $d = 2$ and $d = 3$. We find that
\begin{align*}
\alpha_\star(p)^p & \le\tfrac{p^p}{p-1} \,\big(\tfrac2{p-2}\big)^{p-2}\,\pi \quad\mbox{if}\quad d=2\,,\\
\alpha_\star(p)^p & \le p^p \,\big(\tfrac2{p-3}\big)^{p-3}\,\tfrac{\Gamma\(p-\frac 32\)}{\Gamma(p)}\,\pi^\frac 3{2}\quad\mbox{if}\quad d=3\,.
\end{align*}
In particular, 
\begin{align*}
&\alpha_\star^{\rad,(n=0)}(2)\le2\,\sqrt\pi\approx3.54491\quad\mbox{if}\quad d=2\,,\\
&\alpha_\star^{\rad,(\kappa=1)}(3)\le3\(\tfrac\pi2\)^{2/3}\approx4.05385\quad\mbox{if}\quad d=3\,.
\end{align*}
The upper bound given by this expression for $d = 1$ (with $\delta = 0$) coincides with the expression found in Theorem~\ref{th:alphac_d=1}, and we conjecture that we actually have equality in $d = 2$ and $d = 3$ as well. Numerically, the curve $p\mapsto\nrm{W_p}p$ coincides with the numerical solution $p\mapsto\alpha_\star^{\rad,(n=0)}(p)$ if $d=2$ and $p\mapsto\alpha_\star^{\rad,(\kappa=1)}(p)$ if $d=3$ of Figs.~\ref{fig:alphac_d2} and~\ref{fig:alphac_d3}. It is however an open question to decide whether $\varphi_p$, $\chi_p$ and $W_p$ is the unique solution of~\eqref{SE} and if it is optimal among radial optimal functions, and also among non-radial optimal functions (see Appendix~\ref{appendix:radial}).

\begin{center}\large{\bf Appendices}\end{center}
\appendix

\section{Open questions}\label{appendix:open}

In this article, we study the \emph{ground state} defined the {\em lowest eigenvalue in the gap} $\lambda_D(V)$ of a general Dirac operator $\Dirac - V$ with $V \in\mathrm L^p(\R^d, \R^+)$ using Birman-Schwinger techniques, and prove that this quantity always makes sense if the $\mathrm L^p(\R^d)$ norm of $V$ is small enough. To our knowledge, there are several open questions concerning this lowest eigenvalue, which we recall here.
\begin{itemize}
 \item Is the map $V \mapsto \lambda_D(V)$ concave?
 \item Is $\lambda_D(V)$ always a simple eigenvalue, or equivalently, is $\mu_1(K_V)$ always simple?
\end{itemize}
Assuming that the answer of the last question is positive, we denote by $\Psi$ the corresponding eigenfunction for the Dirac operator. We decompose it as $\Psi = \Psi_+ + \Psi_-$ with $\beta\,\Psi_+ = \Psi_+$ (upper component) and $\beta\,\Psi_- = -\, \Psi_-$ (lower component).
\begin{itemize}
 \item If $V$ is radial (decreasing), is $\Psi_+$ also radial (decreasing)?
\end{itemize}
Concerning the variational problem associated with~\eqref{eq:min_vap_Dirac}, we recall two questions that were already raised earlier:
\begin{itemize}
 \item Is the optimal potential $V$ radial (decreasing) if $d\ge2$?
 \item If so, is the corresponding \emph{ground state} $\Psi$ the solution with lowest angular momentum and smallest number of oscillations, as it is suggested in Sections~\ref{Sec:Explicit2} and~\ref{Sec:Explicit3}?
\end{itemize}

\section{Is the optimal potential radial? A numerical answer}
\label{appendix:radial}

In dimension $d=2$, we investigate numerically whether the optimal potential $V$ for~\eqref{eq:min_vap_Dirac} is radial, or equivalently whether the optimal potential $W$ for~\eqref{eq:def:N} is radial. In order to do so, we run the following self-consistent algorithm\footnote{The code is available upon request to the authors.}. Recall that $K_W :=\sqrt W\,R_0(\lambda)\,\sqrt W$ where $R_0$ denotes the resolvent of the free Dirac operator. For $p > d=2$ and $\lambda\in [-\,m, m)$, we choose an initial potential $W_0$ at random, and set
\[
\begin{cases}
\phi_k&:=\kern 6pt \mbox{normalized eigenvector corresponding to $\mu_1\(K_{W_k}\)$}\,,\\
W_{k+1}&:=\kern 6pt | \phi_k |^{2/p}\,.
\end{cases}
\]
In practice, the potential $W_{k+1}$ is also translated so that its maximum is at the origin. We can check that the quantity $\mu_1(K_{W_k})$ is increasing, and that the sequence $(W_k)_{k\in\N}$ converges to some limit potential $W_*$ in $\mathrm L^p(\R^2)$. A typical run of the algorithm is displayed in Fig~\ref{fig:movie}.
\begin{figure}[ht]
\includegraphics[width=0.9\textwidth]{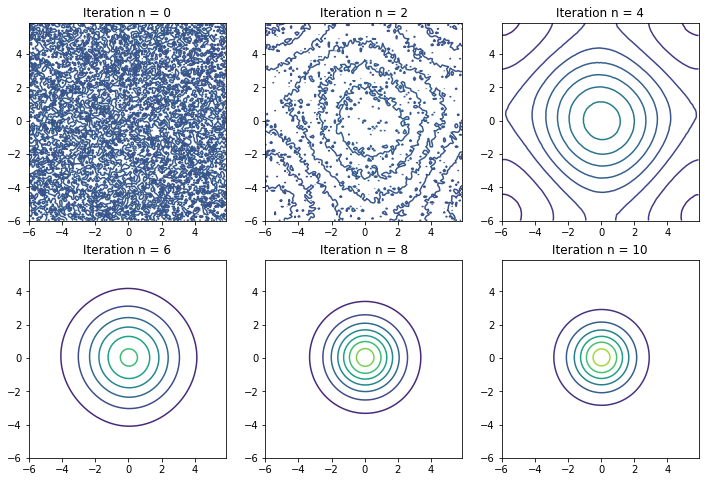}
\caption{Contour lines of the potential $W_k$ during the iterations, for $p=3$ and $\lambda=1/2$, for some~$W_0$ chosen at random. The quantities $W_k$ and $\phi_k$ are computed on a square $[-a, a]^2$ with $a=6$, \hbox{$L=100$} discretization points per direction and periodic boundary conditions. The Dirac operator and its inverse are computed in Fourier space and the $\mathrm L^p(\R^2)$ integrals in direct space.}
\label{fig:movie}
\end{figure}
In order to check whether $W_*$ is radial or not, we compute the $\mathrm L^p(\R^2)$ norm of its angular derivative. For $\lambda\in [-0.9, 0.9]$, $m=1$ and $p\in (2, 8)$, this norm is always much smaller than~$1$ and usually of the order of $10^{-2}$ or $10^{-3}$, after less than 100 iterations, depending on the parameters we chose. These numerical results suggest that the optimal potentials might be radial, up to translations.

\section{A nonlinear interpolation inequality for the Dirac operator}
\label{appendix:GNS}

\subsection{Non-relativistic limit and Keller-Lieb-Thirring inequalities}\label{appendix:GNS-1}

In order to consider the \emph{non-relativistic limit} $c\to+\infty$, it is interesting to reintroduce the parameters $\hbar$, $m$ and $c$. The eigenvalue problem
\[
\(\Dirac^{\hbar,c}-W\)\psi=\mu\,\psi\quad\mbox{where}\quad\Dirac^{\hbar,c}:=-\,\ri\,\hbar\,c\,\boldsymbol\alpha\cdot\nabla+m\,c^2\beta
\]
is reduced to the eigenvalue problem corresponding to $\hbar=c=m=1$ by the change of variables
\[
\psi(x)=\Psi\(\frac{m\,c}\hbar\,x\)\,,\quad W(x)=m\,c^2\,V\(\frac{m\,c}\hbar\,x\)\,,\quad\mu=m\,c^2\,\lambda\,.
\]
As a consequence, the \emph{ground state} $\lambda_D^{\hbar,c}(W)$ of $\Dirac^{\hbar,c}-W$ defined as its lowest eigenvalue in the gap $\(-m\,c^2,m\,c^2\)$ and estimated by $\lambda_D(V)\ge\Lambda_D^{(m=1)}\(\nrm Vp,p\)$ according to the \emph{Keller-Lieb-Thirring inequality for the Dirac operator}~\eqref{KLT-Dirac} becomes
\be{KLThbarc}
\lambda_D^{\hbar,c}(W)\ge m\,c^2\,\Lambda_D^{(m=1)}\(\hbar^{-\frac dp}\,m^{\frac dp-1}\,c^{\frac dp-2}\,\nrm Wp,p\)
\ee
using the above change of variables. Here $\Lambda_D^{(m=1)}$ stands for $\Lambda_D$ when we assume $m=1$ in notations of Theorem~\ref{Thm:Main1}.
\begin{proposition}\label{Prop:NR} Let either $d\ge1$ and $p>1$ if $d=1$, or $p\ge d$ if $d\ge2$. With $\Lambda_D^{(m=1)}(\alpha,p)$ defined by~\eqref{eq:min_vap_Dirac}, $\eta=2\,p/(2\,p-d)$ and $\mathsf K_p$ as in~\eqref{KLT}, we have
\[
1-\Lambda_D^{(m=1)}(\alpha,p)=2^\frac d{2\,p-d}\,\mathsf K_p\,\alpha^\eta\,\big(1+o(1)\big)\quad\mbox{as}\quad\alpha\to0_+\,.
\]
\end{proposition}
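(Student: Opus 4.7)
The plan is to exploit the non-relativistic limit. By Theorem~\ref{Thm:Main1}, $\Lambda_D^{(m=1)}(\alpha,p)\to 1$ as $\alpha\to 0_+$, so setting $e:=1-\Lambda_D^{(m=1)}(\alpha,p)\to 0$ places us in the regime where eigenvalues cluster near the upper edge $+1$ of the gap. At leading order the Dirac operator reduces to the Schrödinger operator $-\tfrac12\Delta-V$, and inequality~\eqref{KLT} will supply the sharp constant: the factor $2^{d/(2p-d)}=2^{\eta-1}$ arises from replacing $V$ by $2V$ in the Schrödinger problem.

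Writing $\lambda=1-e$ and decomposing any Dirac eigenspinor as $\Psi=\Psi_++\Psi_-$ with $\Psi_\pm=\tfrac12(\Id\pm\beta)\Psi$, the two $\beta$-parity blocks of $(\Dirac-V)\Psi=\lambda\Psi$ give $\Psi_-=(2-e+V)^{-1}(\boldsymbol\alpha\cdot p)\Psi_+$ and the exact reduced equation
\[
(\boldsymbol\alpha\cdot p)\,(2-e+V)^{-1}\,(\boldsymbol\alpha\cdot p)\,\Psi_+ - V\Psi_+=-\,e\,\Psi_+\,.
\]
Since $(\boldsymbol\alpha\cdot p)^{2}=-\Delta$, the limit $V,e\to 0$ formally gives $-\tfrac12\Delta\Psi_+-V\Psi_+=-e\Psi_+$, i.e., $-2e$ is an eigenvalue of $-\Delta-2V$. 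Pairing the reduced equation with $\Psi_+$ and using the operator bound $(2-e+V)^{-1}\ge (2-e)^{-1}(1-V/(2-e))$ (valid where $V\le 2-e$, with the high-$V$ region having vanishing $L^p$-mass as $\alpha\to 0$) yields
\[
\tfrac{1}{2+o(1)}\,\nrm{\nabla\Psi_+}2^2+e\,\nrm{\Psi_+}2^2\le\int V\,|\Psi_+|^2\,\rd x\,,
\]
so $\Psi_+$ is a witness for a negative eigenvalue at most $-2e(1+o(1))$ of $-\Delta-2(1+o(1))V$. Applying~\eqref{KLT} and supremizing over $V$ with $\nrm V p=\alpha$ gives $1-\Lambda_D^{(m=1)}(\alpha,p)\le 2^{d/(2p-d)}\mathsf K_p\,\alpha^\eta(1+o(1))$.

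For the matching lower bound, take $V_*$ a rescaling of the optimizer of~\eqref{eq:def:NLS_Intro} such that $\nrm{V_*}p=\alpha$; by the equality case in~\eqref{KLT}, $-\Delta-(2-e)V_*$ admits a ground state $u_*$ at eigenvalue $-(2-e)^\eta\mathsf K_p\,\alpha^\eta$. Choose $e>0$ so that $(2-e)^\eta\mathsf K_p\,\alpha^\eta=e(2-e)$, which gives $e=2^{d/(2p-d)}\mathsf K_p\,\alpha^\eta(1+o(1))$. Apply the Birman-Schwinger principle of Proposition~\ref{Prop:Birman-Schwinger}: testing $K_{V_*}(1-e)$ against the upper-component trial spinor $\phi=(u_*,0,\ldots,0)^\top$ satisfying $\beta\phi=\phi$, using $(\Dirac-\lambda)^{-1}=(\Dirac+\lambda)(-\Delta+1-\lambda^2)^{-1}$ with $1-\lambda^2=e(2-e)$ and noting that the off-diagonal $\boldsymbol\alpha\cdot p$ contribution vanishes in the inner product with $\phi$ (opposite $\beta$-parity), one computes
\[
\langle\phi,K_{V_*}(1-e)\phi\rangle=(2-e)\,\big\langle\sqrt{V_*}\,u_*,(-\Delta+e(2-e))^{-1}\sqrt{V_*}\,u_*\big\rangle=\nrm{u_*}2^2
\]
by the Schrödinger Birman-Schwinger identity applied to $u_*$ at level $-e(2-e)$. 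Hence $\mu_1(K_{V_*}(1-e))\ge 1$, so $\lambda_D(V_*)\le 1-e$, which yields the matching lower bound.

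The hard part is the rigorous justification of the effective Schrödinger reduction when $V$ has the local singularities allowed by $V\in L^p$: the operator inequality on $(2-e+V)^{-1}$ is pointwise-valid only where $V\le 2-e$. A concentration-compactness argument analogous to Section~\ref{sec:proof:w} should show that the Dirac optimizers $V_{\alpha,p}$ from Theorem~\ref{Thm:Main1}, after centering and rescaling of order $\alpha^{\eta/d}$, converge in $L^p$ to the Schrödinger minimizer of~\eqref{ScaledGN}; alternatively Proposition~\ref{thm:regularity} plus bootstrapping yields an $L^\infty$ bound on the rescaled $V_{\alpha,p}$. A second technical point is to show that $\nrm{\Psi_-}2=o(\nrm{\Psi_+}2)$, so that the lower component only produces corrections of order $o(\alpha^\eta)$ in the energy identity.
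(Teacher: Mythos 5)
Your strategy (a direct two-sided asymptotic analysis: supersymmetric reduction to an effective Schr\"odinger form for the Keller-type upper bound on $1-\Lambda_D$, and a Birman--Schwinger trial spinor built from the Schr\"odinger optimizer for the matching bound) is genuinely different from the paper's proof, which is much softer: the paper reinstates $\hbar,m,c$, observes via the exact scaling identity~\eqref{KLThbarc} that $\alpha\to0_+$ is equivalent to $c\to+\infty$ (since $\alpha=\hbar^{-d/p}m^{d/p-1}c^{d/p-2}\nrm Wp$ with $d/p-2<0$), and invokes the known non-relativistic limit $m\,c^2-\lambda_D^{\hbar,c}(W)\to\lambda_S^-(W_\mu)$ with $\mu=\hbar/\sqrt{2m}$ cited from the literature; the constant $2^{d/(2p-d)}$ is then just $\mu^{-d\eta/p}$ with $\mu=1/\sqrt2$. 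Your approach would be more self-contained, but as written it has one concrete error and one genuine unresolved gap.

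The error is in the lower-bound computation. With $\phi=(u_*,0,\dots,0)^\top$ one gets $\langle\phi,K_{V_*}(1-e)\phi\rangle=(2-e)\,\langle\sqrt{V_*}\,u_*,(-\Delta+e(2-e))^{-1}\sqrt{V_*}\,u_*\rangle=\langle u_*,B\,u_*\rangle$, where $B$ is the Schr\"odinger Birman--Schwinger operator of $(2-e)V_*$ at energy $-e(2-e)$. Since $\mu_1(B)=1$ with eigenfunction $\sqrt{V_*}\,u_*$, which is \emph{not} proportional to $u_*$, this quantity is strictly \emph{less} than $\nrm{u_*}2^2$: your claimed identity fails and the Rayleigh quotient comes out $<1$, i.e., the wrong direction. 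The repair is to test instead with $\phi=(\sqrt{V_*}\,u_*,0,\dots,0)^\top$: then $\sqrt{V_*}\phi$ has upper component $V_*u_*$, the eigenvalue equation gives $(2-e)(-\Delta+e(2-e))^{-1}V_*u_*=u_*$, the $\boldsymbol\alpha\cdot(-\ri\nabla)$ part of $R_0$ lands in the opposite $\beta$-parity sector, and one finds $\langle\phi,K_{V_*}(1-e)\phi\rangle=\nrm{\sqrt{V_*}u_*}2^2=\nrm\phi2^2$ exactly, whence $\mu_1(K_{V_*}(1-e))\ge1$ as desired. (One must also choose $V_*$ so that $(2-e)V_*$, not $V_*$, is a KLT optimizer; a constant multiple of an optimizer is not an optimizer, but this is a harmless adjustment.)

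The genuine gap is in the upper bound. The pointwise inequality $(2-e+V)^{-1}\ge(2-e)^{-1}-V(2-e)^{-2}$ actually holds for all $V\ge0$ with no restriction, but inserting it into the quadratic form produces the term $-(2-e)^{-2}\ird{V\,|(\boldsymbol\alpha\cdot\nabla)\Psi_+|^2}$, and your target inequality $\frac1{2+o(1)}\nrm{\nabla\Psi_+}2^2+e\,\nrm{\Psi_+}2^2\le\ird{V|\Psi_+|^2}$ requires showing that this term is $o\big(\nrm{\nabla\Psi_+}2^2\big)$. With only $V\in\mathrm L^p$ and $\nabla\Psi_+\in\mathrm L^2$ the integral need not even be finite; making it small requires quantitative higher regularity (or an $\mathrm L^\infty$ bound after rescaling) for the near-optimal potentials, uniformly as $\alpha\to0_+$. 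You correctly flag this as ``the hard part'', but no proof is supplied, and it is exactly the analytic content that the paper outsources to the cited non-relativistic limit theorem. Until that step is carried out, the upper bound $1-\Lambda_D^{(m=1)}(\alpha,p)\le2^{d/(2p-d)}\,\mathsf K_p\,\alpha^\eta\,(1+o(1))$ is not established.
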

\noindent If $d=1$, we obtain that
\[
\mathsf K_p=\big(p^p\,(p-1)^{-(p-1)}\,B(\tfrac12,p)\big)^{-\frac2{2\,p-1}}
\]
by expanding the expression of $\alpha_D(\lambda,p)$ given in Theorem~\ref{th:alphac_d=1} as $\lambda\to1_-$. This is consistent with $\mathsf K_p=\mathsf C_q^{-\eta}$ and the expression of the explicit, optimal value of the constant $\mathsf C_q$ in~\eqref{ScaledGN} if the dimension is $d=1$: we refer to~\cite{Dolbeault06082014} and references therein for details.

\begin{proof} Let us consider the general case $d\ge1$. The non-relativistic limit of the ground state $\lambda_D^{\hbar,c}(W)$ of the Dirac operator \hbox{$\Dirac^{\hbar,c}-W$} is, up to the mass energy $m\,c^2$, given by the ground state of the Schr\"odinger operator
\[
-\,\tfrac{\hbar^2}{2\,m}\,\Delta-W
\]
by standard results: see for instance~\cite[Section~2.4]{Esteban_2008}. Hence
\[
\lim_{c\to+\infty}\(m\,c^2-\lambda_D^{\hbar,c}(W)\)=\lambda_S^-(W_\mu)\quad\mbox{where}\quad W_\mu(x):=W(\mu\,x)\quad\mbox{and}\quad\mu=\frac\hbar{\sqrt{2\,m}}\,.
\]
Here $-\,\lambda_S^-(W_\mu)$ denotes, if it exists, the negative ground state of
the Schr\"odinger operator \hbox{$-\,\Delta-W_\mu$}. The factor $\mu=\hbar/\sqrt{2\,m}$ arises from a scaling argument. By definition~\eqref{eq:min_vap_Dirac}, we obtain
\[
\lim_{c\to+\infty} m\,c^2\(1-\Lambda_D^{(m=1)}\(\hbar^{-\frac dp}\,m^{\frac dp-1}\,c^{\frac dp-2}\,\nrm Wp,p\)\)\le\mathsf K_p\,\nrm{W_\mu}p^\eta=\mathsf K_p\,\mu^{-\frac{d\,\eta}p}\,\nrm Wp^\eta
\]
but there is in fact equality if we use as test function an optimal function $W$ for~\eqref{KLT}. Taking $\alpha=\hbar^{-\frac dp}\,m^{\frac dp-1}\,c^{\frac dp-2}\,\nrm Wp$ in the limit as $c\to+\infty$ concludes the proof.
\end{proof}

Proposition~\ref{Prop:NR} is in fact equivalent to
\begin{equation} \label{NR}
\lim_{c\to+\infty}\(m\,c^2-\lambda_D^{\hbar,c}(W)\)\le\mathsf K_p\(\tfrac{2\,m}{\hbar^2}\)^\frac d{2\,p-d}\,\nrm Wp^\eta
\end{equation}
written with the physical constants. In other words, we recover a standard \emph{Keller-Lieb-Thirring inequality for the Schr\"odinger operator}~\eqref{KLT} in the non-relativistic limit. In dimension $d=1$, a tedious but elementary computation directly shows that the constant obtained by taking the non-relativistic limit in the Keller-Lieb-Thirring inequality for the Dirac operator written with optimal constant is the optimal constant in the Keller-Lieb-Thirring inequality for the Schr\"odinger operator, as it can be deduced for instance from~\cite{MR121101,Dolbeault06082014}.

The definition~\eqref{eq:def:alpha0} can be generalized to the case $(\hbar,c)\neq(1,1)$ using the monotonicity of $\alpha\mapsto\Lambda_D^{(m=1)}(\alpha,p)$ stated in Theorem~\ref{Thm:Main1} and~\eqref{KLThbarc}. If $\alpha_D^{(m=1)}$ denotes the inverse of $\alpha\mapsto\Lambda_D^{(m=1)}(\alpha,p)$, the condition
\be{DefalphaDhbarc}
\|W\|_p\le\alpha_D^{\hbar,c}(\lambda,p):=\hbar^\frac dp\,m^{1-\frac dp}\,c^{2-\frac dp}\,\alpha_D^{(m=1)}\!\left(\tfrac\lambda{m\,c^2},p\right)
\ee
guarantees that $\lambda_D^{\hbar,c}(W)\ge\lambda$. Notice that $p\ge d$ implies that
\[\label{alphac}
\lim_{c\to\infty}\|W\|_p\le\alpha_D^{\hbar,c}(\lambda,p)=\infty\,.
\]

\subsection{An interpolation inequality for the Dirac operator}\label{appendix:GNS-2}

Using a min-max principle as in~\cite{MR1761368}, it is possible to write an optimal interpolation inequality of Gagliardo-Niren\-berg-Sobolev type which plays for the free Dirac operator the same role as~\eqref{ScaledGN}. The inequality is somewhat involved, but Inequality~\eqref{ScaledGN} is reco\-vered in the non-relativistic limit as $c\to+\infty$. For sake of simplicity, we consider only the case~$d=1$.

Let us start by a short and formal summary of the \emph{min-max principle} applied to the determination of the ground state of the Dirac operator. If $(\varphi,\chi)^\top$ is an eigenspinor of the operator $\Dirac^{\hbar, c}-V$ with eigenvalue $\lambda \in(-\,m \, c^2,\,m\,c^2)$, then, as in~\eqref{eq:system-1-d} we have
\[
\begin{cases}
\hbar\,c\,\varphi' & = -\,(\lambda + m \, c^2 + V)\,\chi\,, \\
\hbar\,c\,\chi' & = (\lambda - m \, c^2 + V)\,\varphi\,.
\end{cases}
\]
The first line gives
\[
\chi = -\,\hbar\,c\,\frac{\varphi'}{ \lambda + m \, c^2 + V}
\]
so that the problem amounts to solving
\[
-\,(\hbar\,c)^2 \( \frac{\varphi'}{\lambda + m \, c^2 + V}\)'+ \( m \, c^2 - \lambda - V \) \varphi = 0\,.
\]
Multiplying by $\varphi$ and integrating suggests to introduce the functional
\[
\mathcal E[\mu,V,\phi]:= (\hbar\,c)^2 \ir{\frac{|\phi'|^2}{\mu + m \, c^2 +V}}+\ir{\(m \, c^2-\mu-V\)|\phi|^2}\,.
\]
Clearly, we have $\cE[\lambda, V, \varphi] = 0$. In addition, for all fixed $V$ and $\phi$, the map $\mu \mapsto \cE[\mu, V, \phi]$ is decreasing. It is proved in~\cite[Lemma 2.4]{SchSolTok20}, that for all $-\,m < \mu < \lambda_D^{\hbar, c}(V)$, the quadratic map $\phi \mapsto \cE[\mu, V, \phi]$ is positive definite, and that, for $\mu = \lambda_D(V)$, we have $\cE[\mu, V, \phi] = 0$ if and only if $\phi=\varphi$, up to a multiplicative constant. In particular, we have
\[
 \forall\,\phi \in C^\infty_0(\R)\,, \quad \forall\,V \in\mathrm L^p(\R)\,, \quad \| V \|_p \le \alpha_D^{\hbar, c}(\lambda, p) \implies \cE[\lambda,V,\phi] \ge 0\,.
\]
Minimizing $\cE[\lambda,W,\phi]$ in $W$ such that $\|W\|_p=\alpha\le \alpha_D^{\hbar, c}(\lambda, p)$ shows that the optimal $W$ solves the Euler-Lagrange equation of the implicit form
\be{eq:implicit_V}
 \nu\,W^{p-1} = | \phi |^2 + \dfrac{(\hbar\,c)^2\,| \phi' |^2}{(\lambda + m \, c^2 + W)^2}\,,
\ee
where $\nu \ge 0$ is now the Lagrange multiplier for the constraint $\|W\|_p=\alpha$. Note that for all fixed $a$, $b$, $c \ge 0$, the equation
\[
 \nu\,X^{p-1} = a + \frac{b}{(c + X)^2}
\]
has a unique solution in $X_\nu \ge 0$, as the left-hand side is an increasing function of $X$, while the right-hand side is decreasing, and that $\nu \mapsto X_\nu$ is increasing. So for fixed $\nu \ge 0$, there is a unique $W = V_\nu[\phi]$ satisfying~\eqref{eq:implicit_V} and the map $\nu \mapsto V_\nu$ is pointwise decreasing, hence so is the map $\nu \mapsto \| V_\nu \|_p$. With $\alpha_D^{\hbar, c}(\lambda, p)$ given by~\eqref{DefalphaDhbarc}, we define
\[
 \nu_*(\lambda, p, \phi) := \inf \left\{ \nu > 0\,:\,\| V_\nu[\phi] \|_p \le \alpha_D^{\hbar, c}(\lambda, p) \right\}\,.
\]
Summarizing, we proved that for all $\phi \in C^\infty_0(\R)$ and all $\nu \ge \nu_*(\lambda, p, \phi)$, 
\be{eq:GNineq_Dirac}
 (\hbar\,c)^2 \ir{\frac{|\phi'|^2}{\lambda + m \, c^2 +V_\nu[\phi]}}+\ir{\Big(m \, c^2-\lambda-V_\nu[\phi]\Big)\,|\phi|^2} \ge 0\,,
\ee
which can be interpreted as a \emph{Gagliardo--Nirenberg type inequality} for $\phi$ alone. Such an inequality is known for a fixed, given potential $V$ from~\cite{MR1761368,DDEV,MR2091354} and it is then of Hardy-type, as for instance the new Hardy inequality in~\cite{esteban2021diraccoulombII}, but the novelty in this paper is that we take $V=V_\nu[\phi]$ thus making it a non-linear interpolation inequality.
While the form~\eqref{eq:GNineq_Dirac} is non-explicit, it allows to recover the usual Gagliardo--Niren\-berg inequality in the non-relativistic limit as $c \to \infty$. By writing $\lambda = m\,c^2 + E$ for some $E < 0$,~\eqref{eq:GNineq_Dirac} becomes
\[
 (\hbar\,c)^2 \ir{\frac{|\phi'|^2}{2\,m \,c^2 + E +V_\nu[\phi]}}- \ir{\big(E + V_\nu[\phi]\big)\,|\phi|^2} \ge 0\,.
\]
Let us choose $\nu=\|\phi\|^2_{2\,p/(p-1)}$. 
As $c \to \infty$, we get from~\eqref{eq:implicit_V} that $V_\nu[\phi]$ converges to $\|\phi\|^{-2/(p-1)}_{2\,p/(p-1)}\,|\phi|^{2/(p-1)}$. Together with~\eqref{NR}, we get that $\nu \ge \nu_*(\lambda, p, \phi)$ in the limit $c \to \infty$ whenever $|E|\ge\mathsf K_p\,(2\,m/\hbar^2)^{d/(2\,p-d)}$. We obtain
\begin{equation*}
 \dfrac{\hbar^2}{2\,m} \ir{|\phi'|^2} - \| \phi \|_{\frac{2\,p}{p-1}}^2 \ge E \ir{| \phi |^2}\,.
\end{equation*}
This inequality is the Gagliardo--Nirenberg inequality~\eqref{ScaledGN} written in non-scale invariant form, for an appropriate choice of the parameter $\lambda$ in~\eqref{ScaledGN}.

\section{The case \texorpdfstring{$p=d=1$}{p=d=1}}\label{App:D}

This appendix deals with the limit case $p=1$ of Theorem~\ref{th:alphac_d=1} devoted to the one-dimension\-al Keller estimates. We give a computation of $\alpha_D(\lambda,1)$ which is not based on the limit as $p\to1_+$ of the nonlinear estimates and prove that any sequence of optimizing potentials concentrates into a Dirac $\delta$ distribution. 
\begin{proposition}\label{Prop:p=d=1}
If $d=1$, then $\alpha_D(\lambda,1)=\arccos(\lambda/m)$. More specifically, for all $\alpha \in (0, \pi)$, all $V \in \mathrm L^1(\R, \R^+)$ with $\nrm V1 = \alpha$, if $\lambda \in (-m,m)$ is an eigenvalue of $\Dirac - V$, then we have the strict inequality
$$
m\,\cos\alpha < \lambda\,.
$$
In addition, any sequence of nonnegative potentials $(V_n)_{n\in\N}$ with $\nrm{V_n}1 = \alpha$ and eigenvalues $\lambda_n$ approaching $m\,\cos\alpha$, converges as $n\to+\infty$ to a Dirac $\delta$ distribution.
\end{proposition}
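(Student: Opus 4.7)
The plan is to reduce to proving the strict bound on the ground state---since any other eigenvalue in the gap satisfies $\lambda \ge \lambda_D(V)$---and then to exploit a Pr\"ufer-type polar decomposition of the one-dimensional Dirac system. Writing the eigenspinor $\Psi = (\varphi, \chi)^\top$ in the form $(\varphi,\chi) = R(\cos\theta, \sin\theta)$ and using the expression of $\Dirac$ from Theorem~\ref{th:alphac_d=1}, the eigenvalue system
\[
 \varphi'(x) = -\bigl(\lambda + m + V(x)\bigr)\,\chi(x)\,, \qquad \chi'(x) = \bigl(\lambda - m + V(x)\bigr)\,\varphi(x)
\]
decouples into the pair
\[
 \theta'(x) = \lambda + V(x) - m\cos\bigl(2\theta(x)\bigr)\,, \qquad (\log R)'(x) = -m\sin\bigl(2\theta(x)\bigr)\,.
\]
Setting $\alpha_0 := \arccos(\lambda/m) \in (0,\pi)$, the free angular ODE (obtained by dropping $V$) has equilibria at $\pm\alpha_0/2 + \pi\Z$; a linearisation identifies $+\alpha_0/2 + \pi\Z$ as repelling and $-\alpha_0/2 + \pi\Z$ as attracting. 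The $\mathrm L^2$ condition $R(\pm\infty)=0$ combined with the second Pr\"ufer equation forces $\theta(-\infty) \in -\alpha_0/2 + \pi\Z$ and $\theta(+\infty) \in \alpha_0/2 + \pi\Z$.

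Granting that the ground-state Pr\"ufer phase has minimal winding, I would normalise $\theta(-\infty) = -\alpha_0/2$ and $\theta(+\infty) = \alpha_0/2$ with $\theta(x) \in [-\alpha_0/2, \alpha_0/2]$ for every $x$. Integrating the phase equation over $[-N, N]$ and passing to the limit $N\to\infty$ yields the key identity
\[
 \alpha - \alpha_0 = \int_\R \bigl(m\cos 2\theta(x) - \lambda\bigr)\,dx\,,
\]
which is well defined since outside the ``transition region'' the integrand vanishes. On the confined range of $\theta$ the integrand is pointwise nonnegative, vanishing only at $\pm\alpha_0/2$. Equality $\alpha = \alpha_0$ would therefore force $\theta(x) \in \{\pm\alpha_0/2\}$ almost everywhere, hence $\theta' \equiv 0$ and $V = \theta' - \lambda + m\cos 2\theta \equiv 0$, contradicting $\|V\|_1 = \alpha > 0$. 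This yields the strict bound $\alpha > \alpha_0$, that is, $\lambda > m\cos\alpha$. The matching formula $\alpha_D(\lambda,1) = \arccos(\lambda/m)$ then follows from this strict lower bound together with the test family $V_\varepsilon := \alpha\,\varepsilon^{-1}\mathbbm{1}_{[-\varepsilon/2,\varepsilon/2]}$ and an intermediate-value argument in $\varepsilon$: a transmission-condition computation shows that $\lambda_D(V_\varepsilon) \to m\cos\alpha$ as $\varepsilon \to 0$, so for any $\alpha$ slightly larger than $\arccos(\lambda/m)$ some value of $\varepsilon$ realises the eigenvalue $\lambda$.

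For the concentration statement, the same identity applied to $(V_n,\lambda_n)$ with $\|V_n\|_1 = \alpha$ and $\lambda_n \to m\cos\alpha$ gives $\int_\R (m\cos 2\theta_n - \lambda_n)\,dx = \alpha - \alpha_{0,n} \to 0$, where $\alpha_{0,n} := \arccos(\lambda_n/m)$. Since the nonnegative integrand is bounded below by $m[\cos(\alpha_{0,n}-2\delta) - \cos\alpha_{0,n}] > 0$ on $\{\theta_n \in [-\alpha_{0,n}/2+\delta, \alpha_{0,n}/2-\delta]\}$ for any $\delta>0$, this set has Lebesgue measure tending to zero; the transition region on which $\theta_n$ moves from $-\alpha_{0,n}/2$ to $\alpha_{0,n}/2$ therefore shrinks to a single point $x_n$. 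Writing $V_n = \theta_n' + (m\cos 2\theta_n - \lambda_n)$, the second summand has $\mathrm L^1$-norm $\alpha - \alpha_{0,n} \to 0$, while $\theta_n'\,dx$ is a nonnegative measure of total mass $\alpha_{0,n} \to \alpha$ supported on the shrinking transition region. Testing against a continuous bounded function shows $V_n\,dx \to \alpha\,\delta_{x_n}$, and translating so that $x_n=0$ yields $V_n \to \alpha\,\delta_0$ in the distributional sense.

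The principal obstacle is the justification of the ``minimal winding'' claim for the ground state, that is, the confinement $\theta(x) \in [-\alpha_0/2, \alpha_0/2]$ without extra rotations by multiples of $\pi$. I would prove this by a homotopy argument along the family $sV$ with $s \in [0,1]$: at $s=0^+$ the ground state emerges into the gap from $+m$ with $\alpha_0 = 0$ and trivial winding, and the winding number is an integer-valued continuous function of $s$ as long as the ground state remains simple and stays away from other eigenvalues, which holds at least for small~$s$ and can be continued. A secondary technical point is that for general $V \in\mathrm L^1(\R,\R^+)$ the eigenfunction only lies in the distinguished domain of Proposition~\ref{prop:selfadjoint} and its derivative is a~priori only distributional; the absolute continuity of $\Psi$ needed to manipulate $\theta$ classically follows from $\Dirac\Psi = (\lambda + V)\Psi \in \mathrm L^1_{\rm loc}$ combined with the fact that $\Dirac$ is a constant-coefficient first-order system in one dimension.
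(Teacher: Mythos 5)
Your Pr\"ufer reduction is, at bottom, the same mechanism as the paper's proof: the paper's angle $\theta_\lambda(\st)$ with $\st=\bbeta/\aalpha$ is a reparametrisation of your phase $\theta$, and the identity you integrate is the one integrated in~\eqref{eq:integral_eq_for_theta}. The genuine gap is precisely the one you flag yourself: the confinement $\theta(x)\in[-\alpha_0/2,\alpha_0/2]$. The homotopy along $s\mapsto s\,V$ that you propose requires the ground state to remain simple and isolated along the whole family, and simplicity of $\lambda_D(V)$ (equivalently of $\mu_1(K_V)$) is listed in Appendix~\ref{appendix:open} as an \emph{open} problem, so ``can be continued'' is exactly what is not available. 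The key observation is that no winding control is needed. Since $V\ge0$ and $-\alpha_0/2$ is the attracting equilibrium, $\theta$ can never cross $-\alpha_0/2$ downwards, so $\theta\ge-\alpha_0/2$ throughout; it then suffices to stop the integration at the \emph{first} point $x_1$ where $\theta$ reaches $\alpha_0/2$ (possibly $x_1=+\infty$). On $(-\infty,x_1)$ the integrand $m\cos2\theta-\lambda$ is nonnegative and $\int_{-\infty}^{x_1}V\le\alpha$, which gives $\alpha_0\le\alpha$, with strictness because the continuous function $\theta$ must traverse the whole interval $(-\alpha_0/2,\alpha_0/2)$ and so cannot take values only in $\{\pm\alpha_0/2\}$ a.e. This is what the paper does ($x_1$ is the first zero of $\aalpha$), and it proves the strict bound for \emph{every} eigenvalue in the gap at once, with no reduction to the ground state and no nodal count.

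Two secondary points. First, in the concentration step, $\theta_n'\,\rd x$ is \emph{not} a nonnegative measure: $\theta_n'=V_n-(m\cos2\theta_n-\lambda_n)$ is negative wherever $V_n$ is small and $\theta_n$ lies strictly inside the band; what is true is that its negative part has total mass at most $\alpha-\alpha_{0,n}\to0$, which is what you should use. Moreover, smallness of the measure of $\{\theta_n\in[-\alpha_{0,n}/2+\delta,\alpha_{0,n}/2-\delta]\}$ does not by itself localise the transition at a single point, since $\theta_n$ may enter and leave the band on several widely separated intervals; you must exclude two essentially complete upward crossings by a mass count (each costs $\int V_n\gtrsim\alpha_{0,n}$, and $2\,\alpha_{0,n}>\alpha$ eventually), or argue as the paper does via the concentration function $Q_{V_n}$, which rules out dichotomy directly. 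Second, your $V_\varepsilon$ test family for the attainment of $\arccos(\lambda/m)$ is a legitimate alternative to the paper's route (which obtains this direction from the $p\to1_+$ limit in Theorem~\ref{th:alphac_d=1}), and it is consistent with the Klein-paradox caveat because you take the limit of the eigenvalues rather than the eigenvalue of the limiting operator; but the transmission computation is asserted rather than carried out.
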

According to~\cite{MR1009526}, ``the method of directly solving the Dirac equation with a $\delta$-function potential and the method of obtaining the solution by first solving the Dirac equation with a short-range potential and afterward taking the $\delta$-function limit, lead to different results'' [concerning the spectrum]. This issue is known as Klein's paradox. Although the Keller-Lieb-Thirring~\eqref{cos} makes sense for any nonnegative potential $V\in\mathrm L^1(\R^d)$, it is a natural question to investigate by direct methods whether the bound is achieved in the larger set of bounded nonnegative measures and consider sequences of optimizing potentials.

\begin{proof} We start with a calculation for a bounded and compactly supported potential $V$. In this case, the eigenvalue equation rewrites as
\[
\Psi' = \big(\ri\,\sigma_2\,(V + \lambda)+ m\,\sigma_1\big)\,\Psi\,.
\]
We decompose $\Psi$ on the (not-orthonormal) basis given by the eigenvectors $e_\pm$ of the matrix $\ri\,\lambda\,\sigma_2 - m\,\sigma_1$ defined by
\begin{equation*}
e_\pm := \begin{pmatrix}\sqrt{m^2 - \lambda^2}\,\\ 
\pm\,(m-\lambda) 
\end{pmatrix} \quad \text{such that}\quad \begin{pmatrix}
0 & m+\lambda  \\ m-\lambda  &0
\end{pmatrix} e_\pm = \pm\,\sqrt{m^2 - \lambda^2} \, e_\pm\,.
\end{equation*}
Decomposing $\Psi (x) = \aalpha (x)\,e_+ + \bbeta (x)\,e_- $ and using the identities 
\begin{align*}
    &\langle \ri\,\sigma_2\,e_\pm, e_\pm \rangle = 0\,,
    \quad& \langle \ri\,\sigma_2\,e_\pm, e_\mp \rangle = \pm\,2\,(m-\lambda)\,\sqrt{m^2 -\lambda^2}\,,\\
    &\langle \ri\,\sigma_2\,e_\pm, \ri\,\sigma_2\,e_\pm \rangle =  2\,m\,(m-\lambda)\,, 
    &\quad \langle \ri\,\sigma_2\,e_\pm,\ri\,\sigma_2\,e_\mp \rangle =  2\,\lambda\,(m-\lambda)\,,
\end{align*}
gives, with $\sV := V/\sqrt{m^2-\lambda^2}$,
\begin{align*}
\aalpha ' &= \left(\sqrt{m^2 -\lambda^2}- \lambda\,\sV \right) \aalpha - m\,\sV\,\bbeta\,, \\
\bbeta' &=- \left( \sqrt{m^2 -\lambda^2} - \lambda\,\sV \right) \bbeta + m\,\sV\,\aalpha\,.
\end{align*}
Since $V$ (and $\sV$) are compactly supported, a square-integrable solution must have $\bbeta=0$ in a neighborhood of $-\infty$ and $\aalpha = 0$ in a neighborhood of $+\infty$.  Without loss of generality, we take a solution with $\aalpha(x) > 0$ for $x$ near $-\infty$. 
Since 
\[
(\aalpha\,\bbeta) ' = m\,\sV\,(\aalpha^2 - \bbeta^2) 
\]
is nonnegative if $|\aalpha| > |\bbeta|$,
such a solution enters the first $(\aalpha,\bbeta)$ quadrant and stays in the first quadrant until the first value of $x$ such that $\aalpha(x) = 0$. We denote this value by $x_1$, with $x_1= +\infty$ if $\aalpha$ does not change sign.
In the interval $(-\infty, x_1)$,  the ratio $\st:= \bbeta/\aalpha$ is well-defined and satisfies
\begin{multline*}
\st' = \frac{1}{\aalpha^2} \left( -\,2\,\aalpha\,\bbeta \left(\sqrt{m^2 -\lambda^2}- \lambda\,\sV \right) + m\,\sV \(\aalpha^2 +\bbeta^2\) \right) \\
= -\,2\,\sqrt{m^2 -\lambda^2}\,\st + \sV \left( m + m\,\st^2 + 2\,\lambda\,\st \right) \,.
\end{multline*}
We finally define the angle 
$$\theta_\lambda(\st):= \arctan \left(\frac{ m\,\st + \lambda}{\sqrt{m^2-\lambda^2}}\right)\,,
$$
such that $\lim_{x \to x_1} \theta_\lambda(\st(x)) = \pi/2 $ and
\be{ODEtheta}
   \frac{1}{\sqrt{m^2 -\lambda^2}}\, (\theta_\lambda \circ \st)'
 =\frac{\st'}{m + 2\,\lambda\,\st + m\,\st^2} 
 = \sV - 2\,\frac{\sqrt{m^2 - \lambda^2}\, \st}{m + 2\,\lambda\,\st + m\,\st^2}\,.
\ee
Integrating  for $x \in (-\infty, x_1)$, we obtain
\begin{align} \label{eq:integral_eq_for_theta}
\frac{\pi/2- \theta_\lambda(0)}{\sqrt{m^2 -\lambda^2} }  
=  \int_{-\infty}^{x_1} \sV(s)\,\rd s -2  \int_{-\infty}^{x_1} \frac{\sqrt{m^2 - \lambda^2}\, \st}{m + 2\,\lambda\,\st + m\,\st^2}\,\rd s 
< \frac{\alpha}{\sqrt{m^2- \lambda^2}}\,.
\end{align}
Since $\theta_\lambda(0) = \arcsin(\lambda/m)$, we obtain
$$
\arccos(\lambda/m) < \alpha \quad \text{or}\quad \lambda > m\,\cos\alpha\,.
$$

In order to approximate unbounded potentials, we need an estimate on the negative term in~\eqref{eq:integral_eq_for_theta}. Take any number $c >1$. Since $\st$ is continuous, there is an interval $I_V(c) \subset (-\infty, x_1]$ such that $\st(x)\in (1/c, c)$ for all $x \in I_V(c)$. 
We have the bound (note that the integrand is symmetric under $\st \mapsto 1/\st$)
$$
\int_{I_V(c)} \frac{ 2\, (m^2 - \lambda^2)\,\st(s)}{m + m\,\st(s)^2 + 2\,\lambda\,\st(s)}\,\rd s \ge\frac{ 2\,(m^2 - \lambda^2)\,c}{m + m\,c^2 + 2\,\lambda\,c}\,|I_V(c)|
$$
and therefore
\begin{equation} \label{eq:inequality_with_error_term}
    \arccos(\lambda/m) \le \alpha - \frac{ 2\,(m^2 - \lambda^2)\,c}{m + m\,c^2 + 2\,\lambda\,c}\,|I_V(c)|\,.
\end{equation}
To prove that $|I_V(c)|$ cannot be arbitrarily small, we integrate~\eqref{ODEtheta} on $I_V(c)$, which gives
\begin{align} \label{eq:lower-bound-for-Q_n}
   \theta_\lambda(c) - \theta_\lambda(1/c) \le \int_{I_V(c)} V(s) \, \rd s \le Q_V\big(|I_V(c)|\big)
\end{align}
where we have defined
$$
Q_V(r) := \sup_{x \in \R} \int_{x-r/2}^{x+r/2} V(s) \, \rd s\,.
$$
Now, assume that $(V_n)_{n\in\N}$ is a sequence of potentials with $\nrm{V_n}1 = \alpha$ and eigenvalues~$\lambda_n$ converging to $\lambda:= m\,\cos\alpha$. Without loss of generality, we may assume that each $V_n$ is bounded and compactly supported. By~\eqref{eq:inequality_with_error_term}, in order to approach the equality case, we need that $|I_{V_n}(c)|$ tends to zero for each $c>1$. We now use~\eqref{eq:lower-bound-for-Q_n} to show that this implies the convergence (after suitable translations) to a Dirac $\delta$ distribution.

Fix $\epsilon>0$ and $r>0$. Fix $c>1$ and $n_0$ such that for all $n\ge n_0$, we have
$$
\theta_{\lambda_n}(c) - \theta_{\lambda_n}(1/c) \ge \theta_\lambda(+\infty) - \theta_\lambda(0)-\epsilon = \alpha -\epsilon\,.
$$
Upon increasing $n_0$, we can assume $|I_{V_n}(c)|\le r$ for all $n \ge n_0$. From~\eqref{eq:lower-bound-for-Q_n}, this gives
$$
Q_{V_n}(r) \ge Q_{V_n}(|I_{V_n}(c)|) \ge \alpha - \epsilon\,.
$$
Since $r$ and $\epsilon$ are arbirary, we have shown that $Q_{V_n}$ converges pointwise to $\alpha$.
In the language of concentration-compactness, this excludes \emph{vanishing} and \emph{dichotomy} and implies that, after a sequence of translations, $V_n$ converges to a measure of total mass $\alpha$ supported at the origin, hence, to a Dirac $\delta$ distribution.
\end{proof}

\begin{ack}
The authors thank the referees for their suggestions which have all been taken into account and led to a significant improvement of the paper.
\end{ack}
\newpage
\begin{funding} This work was partially supported by the Project EFI (ANR-17-CE40-0030) of the French National Research Agency (ANR). HVDB received funding from the Center for Mathematical Modeling (Universidad de Chile \& CNRS IRL 2807) through ANID/Basal projects \#FB210005 and \#ACE210010, as well as
ANID/Fondecyt project \#11220194, and MathAmSud project EEQUADDII 20-MATH-04. This work was partially developed when FP was employed at CNRS \& CEREMADE-Universit\'e Paris-Dauphine and supported by the project ANR-17-CE29-0004 molQED of the ANR and by the European Research Council (ERC) under the European Union's Horizon 2020 research and innovation programme (grant agreement MDFT No.~725528). He is also supported by the project PID2021-123034NB-I00 funded by MCIN/AEI/10.13039/501100011033/ FEDER, UE.

\noindent{\scriptsize \copyright\,\the\year~by the authors. Reproduction of this article by any means permitted for noncommercial purposes. \hbox{\href{https://creativecommons.org/licenses/by/4.0/legalcode}{CC-BY 4.0}}}
\end{funding}


\end{document}